\numberwithin{equation}{section}
\theoremstyle{plain}
\newtheorem{theorem}{Theorem}[section]
\newtheorem{lemma}[theorem]{Lemma}
\newtheorem{proposition}[theorem]{Proposition}
\newtheorem{corollary}[theorem]{Corollary}
\theoremstyle{definition}
\newtheorem{definition}[theorem]{Definition}
\newtheorem{example}[theorem]{Example}
\newtheorem{remark}[theorem]{Remark}
\newtheorem{question}[theorem]{Question}
\let\c@equation\c@theorem  
\newcommand{\bfl}{\mathfrak l}
\newcommand{\lrr}{l_{\Re}}
\DeclareMathOperator{\hdet}{hdet} 
\DeclareMathOperator{\gldim}{gldim}
\DeclareMathOperator{\Ext}{Ext}
\DeclareMathOperator{\Aut}{Aut}
\DeclareMathOperator{\injdim}{injdim}
\DeclareMathOperator{\GKdim}{GKdim}
 \DeclareMathOperator{\R}{R}
 \DeclareMathOperator{\Hom}{Hom}
\DeclareMathOperator{\GrMod}{{\sf GrMod}}
\newcommand{\fm}{\mathfrak{m}}
\newcommand{\fp}{\mathfrak{p}}
\newcommand{\be}{\begin{enumerate}}
\newcommand{\ee}{\end{enumerate}}
\newcommand{\bq}{\begin{eqnarray*}}
\newcommand{\eq}{\end{eqnarray*}}
\newcommand{\bqn}{\begin{eqnarray}}
\newcommand{\eqn}{\end{eqnarray}}
\begin{document}
\title[Dual Reflection Groups]
{Nakayama automorphism and rigidity of\\
Dual Reflection Group coactions}

\author{E. Kirkman, J. Kuzmanovich and J.J. Zhang}

\address{Kirkman: Department of Mathematics,
P. O. Box 7388, Wake Forest University, Winston-Salem, NC 27109}

\email{kirkman@wfu.edu}

\address{Kuzmanovich: Department of Mathematics,
P. O. Box 7388, Wake Forest University, Winston-Salem, NC 27109}

\email{kuz@wfu.edu}

\address{Zhang: Department of Mathematics, Box 354350,
University of Washington, Seattle, Washington 98195, USA}

\email{zhang@math.washington.edu}

\begin{abstract}
We study homological properties and rigidity of
group coactions on Artin-Schelter regular algebras.
\end{abstract}

\subjclass[2000]{16E10, 16E65, 16W30, 20J50}




\keywords{Artin-Schelter regular algebra, dual reflection 
group, Frobenius algebra, Hilbert series, fixed subring, 
Nakayama automorphism, rigidity}


\maketitle


\setcounter{section}{-1}

\section{Introduction}
\label{xxsec0}

The classical Shephard-Todd-Chevalley Theorem states that if $G$ is 
a finite group acting faithfully on a finite dimensional 
${\mathbb C}$-vector space $\bigoplus_{i=1}^n {\mathbb C} x_i$, 
then the fixed subring ${\mathbb C}[x_1,\cdots, x_n]^G$ is isomorphic 
to ${\mathbb C}[x_1,\cdots, x_n]$ if and only if $G$ is generated by 
pseudo-reflections of $\bigoplus_{i=1}^n {\mathbb C} x_i$. 
Such a group $G$ is called a {\it reflection group}. 
The indecomposable complex reflection groups are classified 
by Shephard and Todd \cite{ST}, and there are three infinite families 
plus 34 exceptional groups. When the commutative polynomial rings are 
replaced by skew polynomial rings, one can define a notion of reflection group 
in this noncommutative setting. Then there is one extra class of ``mystic'' 
reflection groups $M(a,b,c)$, discovered in \cite{KKZ2}, also see \cite{BB} 
for further discussion.

When $G$ is replaced by a semisimple Hopf algebra $H$ (which is not a 
group algebra), then there is no inner faithful action of $H$ on the 
commutative polynomial ring by a very 
nice result of Etingof-Walton \cite{EW1}. This is one of many reasons
why we need to consider noncommutative Artin-Schelter 
regular algebras if we want to have a fruitful noncommutative invariant 
theory. Artin-Schelter regular algebras were introduced by Artin-Schelter 
\cite{AS} in 1980's, and by now, are considered  a natural analogue of 
commutative polynomial rings in many respects. The definition of an Artin-Schelter 
regular algebra (abbreviated by AS regular) is given in Definition \ref{xxdef1.1}. 

Throughout the rest of this paper, let $\Bbbk$ be a base field of characteristic 
zero, and all vector spaces, (co)algebras, and morphisms are over $\Bbbk$.
Let $H$ be a semisimple Hopf algebra and let $K$ be the 
$\Bbbk$-linear dual of $H$. Then $K$ is also a semisimple 
Hopf algebra. It is well-known that a left $H$-action
on an algebra $A$ is equivalent to a right $K$-coaction 
on $A$, and we will use this fact freely.

In this paper we are interested in the case
when $H$ is $\Bbbk^G:=\Hom_{\Bbbk}(\Bbbk G, \Bbbk)$, or 
equivalently, $K$ is the group algebra $\Bbbk G$ for some finite
group $G$. Let $e$ denote the unit of $G$. As an algebra,
$\Bbbk^G=\bigoplus_{g\in G} \Bbbk \; p_g$ where its multiplication 
is determined by
$$p_g p_h=\begin{cases} p_g & g=h,\\0 & g\neq h \end{cases}, 
\quad 1=\sum_{g\in G} p_g,$$
and its coalgebra structure is determined by
$$\Delta(p_g)=\sum_{h\in G} p_h \otimes p_{h^{-1}g}, \quad 
\epsilon(p_{g})=\delta_{e,g}:=\begin{cases} 1 & g=e,\\ 0 & g\neq e,
\end{cases}$$
for all $g\in G$.

\begin{definition}
\label{xxdef0.1} A finite group $G$ is called a 
{\it dual reflection group} if the Hopf algebra $H:=\Bbbk^G$ acts
homogeneously and inner faithfully on a noetherian AS regular 
domain $A$ generated in degree 1 such that the fixed subring 
$A^H$ is again AS regular, i.e. the identity component of $A$ under the $G$-grading is AS regular. In this case we say that 
{\it $G$ coacts on $A$ as a dual reflection group}.  
\end{definition}

An example of a dual reflection group acting on an
AS regular algebras is given in Example \ref{xxex3.7}; further examples will appear in
\cite{KKZ5}.  We do not yet have a classification of dual reflection groups.

When we have a Hopf algebra (co)action on an AS regular
algebra, the homological (co)determinant is defined 
in \cite{KKZ3}.

\begin{definition}
\label{xxdef0.2} Suppose a finite group $G$ coacts on $A$ as a
dual reflection group. Let $D$ be the homological codeterminant
of the $\Bbbk G$-coaction on $A$ as defined in 
\cite[Definition {\rm{6.2}}]{KKZ3} or \cite[Definition 
{\rm{1.4(b)}}]{CWZ}. Then we call the element $m:=D^{-1}$ in $G$ the 
{\it mass} element of the $\Bbbk G$-coaction (or $\Bbbk^G$-action) 
on $A$.
\end{definition}

Our first result is the following. Let $H_A(t)$
be the Hilbert series of a graded algebra $A$. 

\begin{theorem}
\label{xxthm0.3} Let $A$ be a noetherian AS regular domain 
generated in degree 1. Let $G$ coact on $A$ inner faithfully 
as a dual reflection group and $H=\Bbbk^G$. Then the following hold.
\begin{enumerate}
\item[(1)]
There is a set of homogeneous elements $\{ f_{g}\mid g\in G\}\subseteq A$ 
with $f_e=1$ such that $A=\bigoplus_{g\in G} A_g$ and 
$A_g=f_g \cdot A^H=A^H \cdot f_g$ for all $g\in G$. 
\item[(2)]
There is a generating subset $\Re$ of the group $G$ satisfying
$e\not\in \Re$ such that $A_1=\oplus_{g\in \Re} \Bbbk f_g \oplus (A_1\cap A^H)$.
\item[(3)]
Let $\lrr(g)$ denote the (reduced) length of $g\in G$ with respect to 
$\Re$. Then $\deg f_g=\lrr(g)$ for all $g\in G$.
\item[(4)]
The mass element $m$ is the unique element in $G$ of the maximal length with 
respect to $\lrr$ defined as in part {\rm{(3)}}.
\item[(5)]
Let $\fp(t)=H_{A}(t) H_{A^H}(t)^{-1}$. Then $\fp(t)$ is a product 
of cyclotomic polynomials, $\fp(1)=|G|$ and $\deg \fp(t)=\lrr(m)$.
\end{enumerate}
\end{theorem}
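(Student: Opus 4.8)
The plan is to first prove that $A$ is a graded free module over $A^H$, and then to read off parts (1)--(5) from the resulting $G$-graded free-module structure, deferring the identification of the mass element to a homological codeterminant computation, which I expect to be the genuine difficulty.

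Write $A=\bigoplus_{g\in G}A_g$ for the $G$-grading coming from the $\Bbbk G$-coaction; since the coaction is homogeneous this refines the given $\mathbb Z$-grading, so $A$ is bigraded and $A_e=A^H$. Because $H=\Bbbk^G$ is semisimple, the Reynolds operator $A\to A^H$ (projection onto $A_e$) is an $A^H$-bimodule splitting, and a standard argument (surjectivity of the trace together with noetherianity) shows $A$ is module-finite over $A^H$; as $A^H$ is AS regular this forces $\GKdim A=\GKdim A^H$ and hence $\gldim A=\gldim A^H=:n$. I would then run graded Auslander--Buchsbaum over the connected graded, AS regular (so Cohen--Macaulay) ring $A^H$: a homogeneous system of parameters of $A^H$ is one for $A$, and $A$ is Cohen--Macaulay of dimension $n$, so $\depth_{A^H}A=n=\depth A^H$ and $\pdim_{A^H}A=0$. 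Thus $A$ is graded free as a left, and symmetrically as a right, $A^H$-module, and each $A_g$, being a graded $A^H$-bimodule summand of $A$, is graded free on both sides.

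For parts (1)--(3), set $q_g(t):=H_{A_g}(t)H_{A^H}(t)^{-1}\in\mathbb Z[t]$, so that $\fp(t)=\sum_{g}q_g(t)$ and $\fp(1)=\rank_{A^H}A=\dim_\Bbbk H=|G|$. Inner faithfulness of the coaction means the support $S:=\{g:A_g\neq0\}$ generates $G$; since $A$ is a domain $S$ is a subsemigroup of $G$, hence a subgroup, so $S=\langle S\rangle=G$ and every $q_g(1)\geq1$. As $\sum_g q_g(1)=|G|$ has exactly $|G|$ summands, each $q_g(1)=1$, i.e. every $A_g$ is free of rank one; choosing a homogeneous left generator $f_g$ with $f_e=1$ and comparing $\mathbb Z$-degrees (using $(A^H)_0=\Bbbk$) shows a right generator differs from $f_g$ by a scalar, giving $A_g=f_gA^H=A^Hf_g$, which is (1). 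Put $\Re:=\{g\neq e:\deg f_g=1\}$; then the $\mathbb Z$-degree-$1$ part of $A_g$ is $\Bbbk f_g$ for $g\in\Re$, is $A_1\cap A^H$ for $g=e$, and is $0$ otherwise, yielding (2); since $A$ is generated in degree $1$ the $G$-degrees $\Re\cup\{e\}$ occurring in $A_1$ generate $S=G$, so $\Re$ generates $G$. For (3), if $g=r_1\cdots r_k$ with $r_i\in\Re$ then $0\neq f_{r_1}\cdots f_{r_k}\in A_g$ has degree $k$, whence $\deg f_g\leq\lrr(g)$; conversely expanding the degree-$d$ element $f_g$ (with $d=\deg f_g$) in $G$-homogeneous degree-$1$ generators produces a nonzero product of total $G$-degree $g$ involving at most $d$ factors from $\Re$, whence $\lrr(g)\leq\deg f_g$, and equality follows.

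Combining (1) and (3) gives $\fp(t)=\sum_{g\in G}t^{\lrr(g)}$, so $\fp(1)=|G|$ and $\deg\fp(t)=\max_g\lrr(g)$; writing $H_A(t)=\prod_i(1-t^{a_i})^{-1}$ and $H_{A^H}(t)=\prod_i(1-t^{b_i})^{-1}$ (Hilbert series of noetherian AS regular domains) exhibits $\fp(t)$ as a ratio of products of $1-t^d=\prod_{c\mid d}\Phi_c(t)$, and being a polynomial it is a product of cyclotomic polynomials, giving the first two assertions of (5). The AS-Gorenstein functional equations $H_A(t^{-1})=(-1)^n t^{-\ell_A}H_A(t)$ and $H_{A^H}(t^{-1})=(-1)^n t^{-\ell_{A^H}}H_{A^H}(t)$ yield $\fp(t^{-1})=t^{-(\ell_A-\ell_{A^H})}\fp(t)$, so $\fp$ is palindromic of degree $\ell_A-\ell_{A^H}=\max_g\lrr(g)$; since its constant term is $1$ (only $g=e$ has length $0$), its leading coefficient is $1$, so there is a \emph{unique} element $w_0$ of maximal length. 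It remains to identify $w_0$ with the mass element $m=D^{-1}$, and this is the main obstacle. I would compute the homological codeterminant $D$ directly from the free basis $\{f_g\}$: the one-dimensional space $\uExt^n_A(\Bbbk,A)$ (equivalently the top local cohomology, equivalently the relative canonical module of the Frobenius extension $A^H\subseteq A$) is $G$-homogeneous, and its $G$-degree, read off as the degree of the socle generator $f_{w_0}$ of $A$ over $A^H$, should equal $D=w_0^{-1}$; this matches the toy case $A=\Bbbk[x]$ with $G=\mathbb Z/n$, where $D=g$ and $m=w_0=g^{-1}$. Reconciling this socle computation with the definition of the homological codeterminant in \cite[Definition 6.2]{KKZ3}, together with the Nakayama automorphisms of $A$ and $A^H$, is the delicate step; it gives $m=w_0$ and hence part (4) and the final identity $\deg\fp(t)=\lrr(m)$ of part (5).
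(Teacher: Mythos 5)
Your overall architecture (freeness of $A$ over $A^H$, rank-one $G$-components giving the $f_g$, reading off $\Re$ and the length function from $\mathbb Z$-degrees, and identifying $m$ via the homological codeterminant) is essentially the paper's, and your arguments for parts (2), (3) and the cyclotomic/palindromic statements in (5) are correct and in places more direct than the paper's. But there are two genuine gaps.

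First, your proof that each $A_g$ has rank one rests on the unproved equality $\rank_{A^H}A=\dim_\Bbbk H$. Nothing you have established gives this: from $A_g\neq 0$ for all $g$ you only get $\fp(1)=\sum_g q_g(1)\geq |G|$, and the equality $\fp(1)=|G|$ is precisely equivalent to every $q_g(1)=1$, which is what you are trying to prove --- the argument is circular. The paper (proof of Theorem \ref{xxthm3.5}(1)) proves rank one directly: pick $0\neq a\in A_{g^{-1}}$; then left multiplication by $a$ is an injective map of right $A^H$-modules $A_g\hookrightarrow A_e=A^H$ (injective because $A$ is a domain), so the graded free module $A_g$ has rank at most one, hence exactly one. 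The identity $\rank_{A^H}A=|G|$ is then a consequence, not an input. This is a one-line repair, but as written your step (1) does not close.

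Second, part (4) --- that the mass element $m=D^{-1}$ coincides with the unique maximal-length element $w_0$ --- is the substantive homological content of the theorem, and you explicitly leave it as a sketch (``should equal'', ``the delicate step''). The paper carries it out in Proposition \ref{xxpro4.2}(1): freeness and change of rings give $\Ext^d_{A^H}(\Bbbk,A)\cong (A^{cov\;H})^*\otimes \Ext^d_A(\Bbbk,A)$ and also $\Ext^d_{A^H}(\Bbbk,A)\cong \Ext^d_{A^H}(\Bbbk,A^H)\otimes A^{cov\;H}$ as $\mathbb Z\times G$-graded spaces; comparing the bidegrees of the lowest-degree elements in the two descriptions forces the $G$-degree of the generator of $\Ext^d_A(\Bbbk,A)$ to be $w_0$, and then $\hdet=\mathrm{pr}_{w_0^{-1}}$ and $m=w_0$ follow from \cite[Proposition 5.3(d)]{KKZ3} together with the antipode twist in the definition of $\hdet$ (which is exactly where the inverse distinguishing $D=w_0^{-1}$ from $m=w_0$ enters). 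Without this computation, part (4), and with it the final equality $\deg\fp(t)=\lrr(m)$ in part (5), remains unproved in your write-up.
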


By part (4) of the above theorem, $m$ is also the {\it malth} (= maximal length) 
element in $G$. Hence a purely homologically defined invariant $m$ has strong 
combinatorial flavor. Indeed this mass element will appear in several 
other results.

The Nakayama automorphism of an AS regular (or AS Gorenstein) algebra is an 
important invariant of the algebra. In the study of noetherian 
Hopf algebras, the explicit expression of the Nakayama automorphism has several 
applications in Poincar{\'e} duality \cite[Corollary 0.4]{BZ}, Radford's $S^4$ 
formula \cite[Theorem 0.6]{BZ}, and so on. The result about the Nakayama 
automorphism of a smash product \cite[Theorem 0.2]{RRZ2} partially recovers  a 
number of previous results concerning Calabi-Yau algebras; see the discussion in 
\cite[Introduction]{RRZ2}. 
In \cite{RRZ3}, a further connection between the Nakayama automorphism 
of an AS regular algebra and the Nakayama automorphism of  its Ext-algebra
is studied. In \cite{LMZ}, the Nakayama automorphism 
has been used to study the automorphism group and  a cancellation problem
of algebras.  The Nakayama 
automorphism is also essential in the study of 
rigid dualizing complexes \cite{BZ, VdB}. The main purpose of this paper is to 
describe the Nakayama automorphism and to study the homological properties of 
various algebras related to the dual reflection group coaction on AS regular algebras.

We will recall several definitions in Section \ref{xxsec1}.
Let $A$ be any algebra. We use $\mu_A$ to denote the Nakayama 
automorphism of $A$. If $A$ is a connected graded algebra,
then the graded Nakayama automorphism $\mu_A$ is unique (if exists) since 
there is no nontrivial invertible homogeneous element in $A$. When 
$A$ is a connected graded noetherian AS Gorenstein algebra, we usually 
require $\mu_A$ to be the graded Nakayama automorphism of $A$. 

From now on, let $(f_g, \Re, m, \fp(t))$ be as in Theorem \ref{xxthm0.3}.
In the setting of Theorem \ref{xxthm0.3}, we define the {\it covariant ring} of the
$H$-action on $A$ to be 
$$A^{cov \; H}:=A/((A^H)_{\geq 1})$$
where $((A^H)_{\geq 1})$ is the ideal of $A$ generated by homogeneous
invariant elements in $A^H$ of positive degree. The covariant ring 
is also called the coinvariant ring by some authors. 

\begin{theorem}
\label{xxthm0.4} 
Under the hypothesis of Theorem {\rm{\ref{xxthm0.3}}}.
\begin{enumerate}
\item[(1)]
The covariant ring $A^{cov \; H}$ is Frobenius of 
dimension $|G|$. 
\item[(2)]
The Hilbert series of $A^{cov\; H}$ is $\fp(t)$. As a consequence,
the coefficients of $\fp(t)$ are positive and the coefficient of the
leading term of $\fp(t)$ is 1. 
\item[(3)]
Let $\overline{f}_g$ be the homomorphic image of $f_g$ in
$A^{cov\; H}$ for each $g\in G$. Then $\{\overline{f}_g \mid g\in G\}$
is a $\Bbbk$-linear basis of $A^{cov\; H}$ and the graded 
Nakayama automorphism of $A^{cov \; H}$ is of the form
$$\mu_{A^{cov\; H}}: \overline{f}_g\to \beta(g) \overline{f}_{mgm^{-1}}, 
\quad {\text{for all $g\in G$}},$$ 
where $\{\beta(g)\}_{g\in G}$ are some nonzero scalars in $\Bbbk$.
\item[(4)]
The scalars in part {\rm{(3)}} satisfy
$$\beta(m)=1,$$ and
if $m$ commutes with $g$ and $h$ and $\lrr(gh)=\lrr(g)+\lrr(h)$ then
$$\beta(gh)=\beta(g)\beta(h).$$
\end{enumerate}
\end{theorem}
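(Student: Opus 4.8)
The plan is to reduce everything to the single structural fact, immediate from Theorem~\ref{xxthm0.3}(1), that $A$ is free as a module over $A^H$ on the basis $\{f_g\}_{g\in G}$. Indeed, since $A_g=f_gA^H=A^Hf_g$ we have $A=\bigoplus_{g\in G}f_gA^H$, so $A^{cov \; H}=A/((A^H)_{\geq1})=A\otimes_{A^H}\Bbbk$ has $\Bbbk$-basis $\{\overline{f}_g\}_{g\in G}$; this gives the basis assertion of part~(3) and the value $\dim_\Bbbk A^{cov \; H}=|G|$ at once. Because $(A^H)_{\geq1}\subseteq A_e$ and $A_gA_e,A_eA_g\subseteq A_g$, the ideal $((A^H)_{\geq1})$ is $G$-homogeneous, so the $G$-grading descends and each component $(A^{cov \; H})_g=\Bbbk\overline{f}_g$ is one-dimensional. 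For part~(2), $A_g=f_gA^H$ forces $H_A(t)=\big(\sum_{g\in G}t^{\lrr(g)}\big)H_{A^H}(t)$, whence $\fp(t)=\sum_{g\in G}t^{\lrr(g)}=H_{A^{cov \; H}}(t)$; positivity of the coefficients and leading coefficient $1$ then follow since $m$ is the unique element of maximal length by Theorem~\ref{xxthm0.3}(4).

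The main work is part~(1), which I expect to be the principal obstacle. I would show that $B:=A^{cov \; H}$ is a finite-dimensional connected graded AS Gorenstein algebra of injective dimension $0$, a property equivalent to Frobeniusness for such algebras. Since $A^H$ is AS regular of global dimension $n$, the trivial module $\Bbbk_{A^H}$ has a finite free resolution of length $n$; applying the flat functor $A\otimes_{A^H}-$ yields a length-$n$ free resolution of $B$ over $A$, so $\pdim_A B=n=\gldim A$. As $B$ has finite length, its grade over the $n$-dimensional AS regular algebra $A$ equals $n$, so $\uExt^q_A(B,A)=0$ for $q\neq n$ and $\omega_B:=\uExt^n_A(B,A)$ is the only nonvanishing term. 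Feeding this into the change-of-rings spectral sequence
\[
\uExt^p_B\big(\Bbbk,\uExt^q_A(B,A)\big)\Longrightarrow \uExt^{p+q}_A(\Bbbk,A),
\]
whose abutment is $\Bbbk$ concentrated in homological degree $n$ (up to a shift) because $A$ is AS Gorenstein, forces $\uExt^p_B(\Bbbk,\omega_B)=\delta_{p,0}\,\Bbbk$ up to a shift. Since $B$ is finite-dimensional this identifies $\omega_B$ with the injective hull of the trivial $B$-module, and comparing $\dim_\Bbbk\omega_B=\dim_\Bbbk B=|G|$ forces $B$ to be self-injective with one-dimensional socle, i.e.\ Frobenius. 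The care here lies in tracking the grading shifts and the left/right module structures so that the conclusion is genuine two-sided Frobeniusness.

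Granting that $B$ is Frobenius, parts~(3) and~(4) are formal. Because $B$ is connected graded with one-dimensional $G$-components and $m$ is the unique element of maximal length, the top degree $(B)_{\lrr(m)}=\Bbbk\overline{f}_m$ is the entire socle, so up to scalar the Frobenius functional $\lambda$ is the projection onto $\overline{f}_m$, normalized by $\lambda(\overline{f}_m)=1$. Then the pairing $\langle\overline{f}_g,\overline{f}_h\rangle:=\lambda(\overline{f}_g\overline{f}_h)$ is nonzero precisely when $gh=m$; writing $\overline{f}_g\overline{f}_{g^{-1}m}=c_g\overline{f}_m$, nondegeneracy of the Frobenius pairing gives $c_g\neq0$ for every $g$. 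Solving the defining identity $\lambda(ab)=\lambda(b\,\mu(a))$ of the Nakayama automorphism $\mu=\mu_{A^{cov \; H}}$ against this diagonal pairing pins $\mu(\overline{f}_g)$ to a single term supported at the conjugate of $g$ by $m$, namely $\mu_{A^{cov \; H}}(\overline{f}_g)=\beta(g)\,\overline{f}_{mgm^{-1}}$ with $\beta(g)$ a ratio of the constants $c_\bullet$ (the precise handedness of the conjugation being fixed by the convention adopted for $\mu$); in particular $\mu$ is automatically $G$-graded, establishing part~(3).

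For part~(4), setting $b=1$ in $\lambda(ab)=\lambda(b\,\mu(a))$ yields $\lambda\circ\mu=\lambda$; since $\mu(\overline{f}_m)=\beta(m)\overline{f}_m$ and $\lambda(\overline{f}_m)=1$, this gives $\beta(m)=1$. Finally, when $m$ commutes with $g$ and $h$ (hence with $gh$) and $\lrr(gh)=\lrr(g)+\lrr(h)$, the domain property of $A$ together with Theorem~\ref{xxthm0.3}(1) shows $f_gf_h\in A_{gh}=f_{gh}A^H$ is a nonzero scalar multiple of $f_{gh}$, so $\overline{f}_g\overline{f}_h=d\,\overline{f}_{gh}$ with $d\neq0$. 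Applying the algebra homomorphism $\mu$ to both sides and using the fixed-point relations $mgm^{-1}=g$, $mhm^{-1}=h$, $m(gh)m^{-1}=gh$, the factor $d$ cancels and one reads off $\beta(gh)=\beta(g)\beta(h)$.
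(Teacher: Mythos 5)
Parts (2), (3) and (4) of your proposal are sound and close in spirit to the paper's own arguments: (2) is exactly the freeness computation $H_A(t)=H_{A^H}(t)\,H_{A^{cov\,H}}(t)$; your derivation of the shape of $\mu_{A^{cov\,H}}$ from the Frobenius pairing is essentially the paper's Theorem \ref{xxthm2.9}(2)/\ref{xxthm2.11} argument (the paper instead realizes $\mu_{A^{cov\,H}}$ as the conjugation by the normal element $f_m$, Proposition \ref{xxpro4.5}, but both routes work); and your two computations in (4) match the paper's. The problem is part (1), on which everything else leans.

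Your homological argument for Frobeniusness has a genuine gap at the last step. What the spectral sequence gives you is $\uExt^p_B(\Bbbk,\omega_B)=\delta_{p,0}\Bbbk$ (up to shift), hence that $\omega_B:=\uExt^n_A(B,A)$ is an injective left $B$-module with one-dimensional socle, i.e. $\omega_B\cong B^\ast$ as left $B$-modules; the dimension count $\dim_\Bbbk\omega_B=\dim_\Bbbk B$ then adds nothing, because $\dim_\Bbbk B^\ast=\dim_\Bbbk B$ automatically. But $\omega_B\cong B^\ast$ does not imply $B$ is self-injective: what Frobenius requires is $B\cong B^\ast$ as one-sided modules, i.e. $\omega_B\cong B$. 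Concretely, every condition you derive is satisfied by $B=\Bbbk[x,y]/(x^2,xy,y^2)$ as a quotient of $A=\Bbbk[x,y]$: here $\uExt^q_A(B,A)=0$ for $q\neq 2$, $\uExt^2_A(B,A)\cong B^\ast$ is injective with one-dimensional socle and $\dim_\Bbbk\omega_B=\dim_\Bbbk B=3$, yet $B$ has two-dimensional socle and is not Frobenius. So your argument proves too much. The missing input is precisely the freeness of $A$ over $A^H$ used at the level of the dualizing module, not just to bound projective dimension: since $A^H$ is AS regular, the top term of the minimal free resolution of $\Bbbk$ over $A^H$ has rank one, so the induced resolution of $B$ over $A$ ends in a rank-one free module and $\omega_B$ is a cyclic $B$-module; cyclic of dimension $|G|=\dim_\Bbbk B$ forces $\omega_B\cong B$, and then $B\cong B^\ast$ follows. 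Equivalently, this is what the paper's Lemma \ref{xxlem3.4} does: it computes $\Hom_{A^H}(A,{}^{\nu}(A^H)^1)\otimes_{A^H}\Bbbk\cong C^\ast$ and identifies the left-hand side with $C(-\bfl)$ via the rigid dualizing complex isomorphism \eqref{E3.4.1}, yielding $C^\ast\cong C(-\bfl)$ directly. Alternatively you could bypass homology entirely and prove nondegeneracy of your pairing combinatorially, as in Theorems \ref{xxthm2.9} and \ref{xxthm2.11}, using that $\fp(t)$ is a product of cyclotomic polynomials and hence palindrome --- but as written your part (3) derives nondegeneracy from part (1), so the gap propagates.
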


As in Theorems \ref{xxthm0.3} and \ref{xxthm0.4}, the mass element $m$ 
has special properties. We can say more next.

\begin{theorem}
\label{xxthm0.5} Under the hypothesis of Theorem {\rm{\ref{xxthm0.3}}}.
\begin{enumerate}
\item[(1)]
$f_m$ is a homogeneous normal element in $A$.  
\item[(2)]
Let $\phi_m$ be the conjugation 
automorphism of $A$ defined by
$$a\to f_{m} a f_{m}^{-1}, \quad {\text{for all $a\in A$.}}$$
Then 
$$\phi_m: f_g\to \beta(g) f_{mgm^{-1}}, \quad {\text{for all $g\in G$}},$$
where $\beta(g)$ are defined as in Theorem {\rm{\ref{xxthm0.4}(3)}}.
In other words, the graded Nakayama automorphism of the covariant ring
$A^{cov\; H}$ is induced by the conjugation automorphism $\phi_m$.
\item[(3)]
Any ${\mathbb N}$-graded algebra automorphism of $A^{cov\; H}$ 
commutes with $\mu_{A^{cov\; H}}$. 
\item[(4)]
The automorphism $\phi_m$ sends $A^H$ to $A^H$. 
\end{enumerate}
\end{theorem}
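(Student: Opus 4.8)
The plan is to reduce everything to one family of exact multiplicative relations in $A$ that come from the Frobenius structure of the covariant ring $B:=A^{cov\; H}$. By Theorem \ref{xxthm0.4}, $B=\bigoplus_{g\in G}\Bbbk\,\overline{f}_g$ with $\overline{f}_g$ spanning the $G$-component $B_g$, so multiplication necessarily has the form $\overline{f}_g\,\overline{f}_h=c(g,h)\,\overline{f}_{gh}$ for scalars $c(g,h)$, and $B$ is Frobenius with one-dimensional socle $B_{\lrr(m)}=\Bbbk\,\overline{f}_m$. Non-degeneracy of the Frobenius pairing forces $c(g,g^{-1}m)\neq 0$ for every $g\in G$. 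Lifting this to $A$ is the crux: the product $f_g f_{g^{-1}m}$ lies in the component $A_m$, is nonzero since $A$ is a domain, and has nonzero image $c(g,g^{-1}m)\,\overline{f}_m$ in $B$; because $(A_m)_{\lrr(m)}=\Bbbk f_m$ and $(A_m)_{\lrr(m)}\cap((A^H)_{\geq 1})=0$, the reduction is injective on this one-dimensional slice, and I obtain the exact identity
$$f_g\,f_{g^{-1}m}=c_g\,f_m,\qquad c_g:=c(g,g^{-1}m)\neq 0,$$
in $A$, for all $g\in G$. This exact lifting from $B$ to $A$ is the main obstacle; everything afterward is formal.

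Parts (1) and (2) then follow from a single computation. Writing $f_m=c_{mgm^{-1}}^{-1}\,f_{mgm^{-1}}\,f_{mg^{-1}}$ (the relation above with index $mgm^{-1}$, using $(mgm^{-1})^{-1}m=mg^{-1}$) and then applying $f_{mg^{-1}}\,f_g=c_{mg^{-1}}\,f_m$ (the relation with index $mg^{-1}$, using $(mg^{-1})^{-1}m=g$) yields
$$f_m\,f_g=\beta(g)\,f_{mgm^{-1}}\,f_m,\qquad \beta(g):=c_{mg^{-1}}/c_{mgm^{-1}},$$
for every $g\in G$. Hence each $f_m f_g$ lies in $A_{mgm^{-1}}f_m\subseteq Af_m$, and since $f_mA^H=A^Hf_m$ (from $A_m=f_mA^H=A^Hf_m$) the chain $f_mA=\bigoplus_g f_m f_g A^H=\bigoplus_g f_{mgm^{-1}}A^H f_m=\bigoplus_{g'}A_{g'}f_m=Af_m$ gives normality of $f_m$, which is part (1). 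The displayed identity is precisely the conjugation formula $\phi_m(f_g)=\beta(g)f_{mgm^{-1}}$ of part (2); it also forces $\lrr(mgm^{-1})=\lrr(g)$, so $\phi_m$ preserves each $G$-component, preserves the ideal $((A^H)_{\geq 1})$, and descends to a graded automorphism $\overline{\phi_m}$ of $B$. To see that these scalars $\beta(g)$ coincide with those of Theorem \ref{xxthm0.4}(3), and hence that $\overline{\phi_m}=\mu_{A^{cov\; H}}$, I verify that $\overline{\phi_m}$ satisfies the defining Frobenius-form relation of the Nakayama automorphism with respect to the socle projection $\lambda$; this is a short check that reduces to the identity $\beta(g^{-1}m)=c_g/c_{mg^{-1}}$, after which uniqueness of the graded Nakayama automorphism finishes part (2).

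For part (3) the key point is that the graded Nakayama automorphism of a connected graded Frobenius algebra is canonical: any two graded Frobenius forms on $B$ differ by a nonzero scalar, since the socle $\Bbbk\overline{f}_m$ is one-dimensional, so $\mu_{A^{cov\; H}}$ does not depend on the chosen form. Given any $\mathbb N$-graded algebra automorphism $\theta$ of $B$, the form $\lambda\circ\theta$ is again a graded Frobenius form whose associated Nakayama automorphism is $\theta^{-1}\mu_{A^{cov\; H}}\theta$; uniqueness then gives $\theta^{-1}\mu_{A^{cov\; H}}\theta=\mu_{A^{cov\; H}}$, i.e. $\theta$ commutes with $\mu_{A^{cov\; H}}$. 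Finally, part (4) is immediate from part (1): once $f_m$ is normal, $\phi_m$ is a genuine automorphism of $A$, and $A_m=f_mA^H=A^Hf_m$ gives $\phi_m(A^H)=f_mA^Hf_m^{-1}=A^Hf_mf_m^{-1}=A^H$.
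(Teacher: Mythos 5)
Your proposal is correct, and its engine is the same as the paper's: the identity $f_m f_g=c^{-1}f_{mgm^{-1}}f_{mg^{-1}}f_g=\beta(g)f_{mgm^{-1}}f_m$, obtained by factoring $f_m$ through the two complementary pieces $f_{mgm^{-1}}$ and $f_{mg^{-1}}$ and using that the structure constants $f_g f_h=c_{g,h}f_{gh}$ are nonzero scalars whenever $\lrr(gh)=\lrr(g)+\lrr(h)$; this is exactly the computation in the paper's Proposition \ref{xxpro4.5}, and your $\beta(g)=c_{mg^{-1}}/c_{mgm^{-1}}$ agrees with the paper's \eqref{E4.4.2}. The differences are in the bookkeeping around it. You obtain the nonvanishing of $c_g=c(g,g^{-1}m)$ from nondegeneracy of the Frobenius pairing on $A^{cov\;H}$ and then lift to $A$ via the one-dimensional top slice of $A_m$; the paper gets the same fact more directly from $A$ being a domain together with the length identity $\lrr(g)+\lrr(g^{-1}m)=\lrr(m)$ established in the proof of Theorem \ref{xxthm2.9} (both are valid, and your route has the small virtue of not presupposing the length identity, since nondegeneracy forces it). For the identification of $\overline{\phi_m}$ with $\mu_{A^{cov\;H}}$ you verify the defining relation $\langle a,b\rangle=\langle\mu(b),a\rangle$ against the socle projection directly, whereas the paper matches the explicit scalar formula against \eqref{E4.4.1}--\eqref{E4.4.2} derived separately in Section \ref{xxsec4.3}; your check $\beta(g^{-1}m)=c_g/c_{mg^{-1}}$ is consistent with your formula for $\beta$, so this works. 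Finally, for part (3) the paper simply cites \cite[Lemma 5.3(b)]{RRZ2}, while you give the standard self-contained argument (uniqueness of the graded Frobenius form up to scalar because the socle is one-dimensional, hence $\theta^{-1}\mu\theta=\mu$); that is a correct and arguably preferable replacement for the citation. Part (4) is handled identically in both.
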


The next result tells us about the Nakayama automorphisms
of other algebras related to $A$. The homological determinant of a Hopf
algebra action is defined in \cite[Definition 3.3]{KKZ3}.

\begin{theorem}
\label{xxthm0.6} 
Under the hypothesis of Theorem {\rm{\ref{xxthm0.3}}}. 
Let $\eta_m=\phi_{m}^{-1}$. 
\begin{enumerate}
\item[(1)]
$\mu_{A^H}=(\eta_{m} \circ \mu_{A})\mid_{A^H}$.
As a consequence, $\mu_A$ sends $A^H$ to $A^H$.
\item[(2)]
Three automorphisms $\mu_{A^H}$, $\mu_{A}\mid_{A^H}$ and $\eta_{m}\mid_{A^H}$
commute with each other.
\item[(3)]
The homological determinant of the $H$-action on $A$
is the projection from $H\to \Bbbk \; p_{m^{-1}}$. 
\item[(4)]
Let $trans^l_{m}$ be the left translation automorphism 
of $H=\Bbbk^G$ defined by 
$$trans^l_{m}: p_g \to p_{mg},
\quad {\text{for all $g\in G$.}}$$ 
Then the Nakayama automorphism
of $A\# H$ is equal to $\mu_A \# trans^l_{m}$.
\end{enumerate}
\end{theorem}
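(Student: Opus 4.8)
The plan is to prove the four parts in the order (3), (4), (1), (2): part (4) needs (3), part (1) is cleanest as a corner computation inside the smash product of (4), and (2) is then formal.

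For (3) I would only translate definitions. By Definition~\ref{xxdef0.2} the mass element is $m = D^{-1}$, where $D \in G$ is the homological codeterminant of the $\Bbbk G$-coaction. Under the equivalence between the $H = \Bbbk^G$-action and the $\Bbbk G$-coaction, the homological determinant $\hdet$ of the action and the codeterminant $D$ of the coaction are the \emph{same} grouplike element of $(\Bbbk^G)^* = \Bbbk G$ (compare \cite{KKZ3} and \cite{CWZ}); I would verify this identification directly on the top local cohomology $\HB^{d}_{\fm}(A)$, where $d$ is the common dimension. Regarding $D = m^{-1} \in \Bbbk G$ as a functional on $H$, it sends $p_g \mapsto \delta_{m^{-1},g}$, i.e. it is the projection $H \to \Bbbk\, p_{m^{-1}}$. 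The only genuine point is the direction (determinant versus its inverse), which is pinned down by the normalization $m = D^{-1}$.

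For (4) I would invoke the smash-product formula \cite[Theorem 0.2]{RRZ2}, which expresses $\mu_{A \# H}$ through $\mu_A$ on the $A$-factor together with an automorphism of $H$ built from $S_H^2$, the Nakayama automorphism $\mu_H$, and the winding automorphism attached to $\hdet$. Since $H = \Bbbk^G$ is semisimple and cosemisimple in characteristic zero, $S_H^2 = \id$ and $\mu_H = \id$, so the formula collapses to $\mu_{A \# H} = \mu_A \# \Xi$, where $\Xi(h) = \sum \hdet(h_1)\,h_2$ is the left winding automorphism of $\hdet$. Using $\Delta(p_g) = \sum_{h} p_h \otimes p_{h^{-1}g}$ and $\hdet = $ (projection onto $p_{m^{-1}}$) from (3), I compute $\Xi(p_g) = \sum_h \hdet(p_h) p_{h^{-1}g} = p_{mg}$, which is exactly $trans^l_m$. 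Hence $\mu_{A \# H} = \mu_A \# trans^l_m$. The one delicate point is matching the winding convention of \cite{RRZ2} so that the group element emerges as $m$ rather than $m^{-1}$.

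For (1) I would use the standard identification $A^H = A_e \cong p_e (A \# H) p_e$ of the fixed ring with a corner of the smash product and pass the rigid dualizing complex through this corner; the $A$-factor of $\mu_{A \# H}$ then contributes $\mu_A\mid_{A^H}$. \textbf{The main obstacle of the whole theorem lives here:} $\mu_{A \# H}$ does not fix the idempotent, since by (4) we have $\mu_{A \# H}(p_e) = 1 \# trans^l_m(p_e) = p_m \neq p_e$. I would resolve this by proving that $p_e$ and $p_m$ are equivalent idempotents, the intertwiner being furnished by the normal element $f_m$ of Theorem~\ref{xxthm0.5}(1); conjugating $p_m (A \# H) p_m$ back onto $p_e (A \# H) p_e = A^H$ introduces precisely the conjugation $\phi_m$, and tracking its direction through the dualizing-complex bookkeeping yields the factor $\eta_m = \phi_m^{-1}$, giving $\mu_{A^H} = (\eta_m \circ \mu_A)\mid_{A^H}$. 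An alternative that avoids smash products is to note that $A^H \subseteq A$ is a graded Frobenius extension, its fibre being the Frobenius algebra $A^{cov\; H}$ of Theorem~\ref{xxthm0.4}(1); one then feeds the self-duality $\Hom_{A^H}(A, A^H) \cong A$, twisted by $\mu_{A^{cov\; H}}$ (Theorem~\ref{xxthm0.5}(2)), into the duality $R_A \cong \RHom_{A^H}(A, R_{A^H})$ for the rigid dualizing complexes and compares bimodule twists. In either route the crux is getting the side and direction of the $\phi_m$-twist right, and both should produce $\eta_m$.

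Finally, (2) is formal once (1) is available. First I would extract from (4) the homological identity $\mu_A(A_g) = A_{mgm^{-1}}$: applying the algebra automorphism $\mu_{A \# H} = \mu_A \# trans^l_m$ to the commutation relation $(1 \# p_g)(a \# 1) = (a \# 1)(1 \# p_{h^{-1}g})$ valid for $a \in A_h$, and comparing $G$-components, forces $\mu_A(A_h) \subseteq A_{mhm^{-1}}$. Taking $h = m$ gives $\mu_A(A_m) = A_m$, so $\mu_A(f_m)$ is a homogeneous element of $A_m = f_m A^H$ of degree $\deg f_m = \lrr(m)$; since $A$ is connected graded, $(A^H)_0 = \Bbbk$, and this forces $\mu_A(f_m) = \lambda f_m$ for a scalar $\lambda$, whence conjugation by $\mu_A(f_m)$ equals conjugation by $f_m$ and therefore $\mu_A$ commutes with $\phi_m$, hence with $\eta_m$. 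As both $\mu_A$ and $\eta_m$ preserve $A^H$ (by (1) and Theorem~\ref{xxthm0.5}(4)), the restrictions $\mu_A\mid_{A^H}$ and $\eta_m\mid_{A^H}$ commute; combined with $\mu_{A^H} = \eta_m \circ \mu_A\mid_{A^H}$ from (1), substituting this relation shows that all three of $\mu_{A^H}$, $\mu_A\mid_{A^H}$, $\eta_m\mid_{A^H}$ commute pairwise.
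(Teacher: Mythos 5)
Parts (3) and (4) of your proposal follow the paper's own route (Proposition 4.2 and Corollary 4.4), except that for (3) you unwind Definition 0.2 against Theorem 0.3(4) instead of recomputing the $G$-degree of a generator of $\Ext^d_A(\Bbbk,A)$; since Theorem 0.3 is already available, that shortcut is legitimate (the paper in fact proves Theorem 0.3(4) \emph{from} this computation, so the real work is just relocated). Your argument for (2) is different from the paper's and is correct, indeed more self-contained: the paper simply quotes that $\mu_{A^H}$ is central in $\Aut_{gr}(A^H)$ from \cite[Lemma 5.3(b)]{RRZ2}, whereas you extract $\mu_A(A_h)\subseteq A_{mhm^{-1}}$ from (4), deduce $\mu_A(f_m)=\lambda f_m$ by degree reasons, and conclude that $\mu_A$ commutes with $\phi_m$ on all of $A$.

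The genuine gap is in your primary route for (1). The idempotents $p_e$ and $p_m$ are \emph{not} equivalent in $A\# H$, and $f_m$ cannot serve as an intertwiner. Indeed $p_m(A\# H)p_e=A_m\# p_e$ and $p_e(A\# H)p_m=A_{m^{-1}}\# p_m$, so for any $x=a\# p_e$ with $a\in A_m$ and $y=b\# p_m$ with $b\in A_{m^{-1}}$ one gets $xy=ab\# p_m$ with $ab\in (A^H)_{\geq 1}$ of ${\mathbb Z}$-degree at least $\lrr(m)+\lrr(m^{-1})>0$; this can never equal $p_m=1\# p_m$, which sits in degree $0$. The underlying reason is that $f_m$ is a positive-degree element of a connected graded domain and hence not invertible, so the corner-conjugation plan collapses at its first step. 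Your fallback route is the viable one, and it is essentially what the paper does (Lemma 3.4 together with Lemma 4.1(3,4)): one computes $R^d\Gamma_{\fm}(A)^*\cong\bigoplus_{g\in G} f_g^*\cdot {}^{\phi_g\mu_B}B^1(-\bfl_B)$ with $B=A^H$, using that each summand $f_g\cdot B$ is the twisted bimodule ${}^1B^{\phi_g}$, and then reads off $\mu_A\mid_B=\phi_m\circ\mu_B$ from the lowest-degree generator $f_m^*$. In your writeup this step is only gestured at (``compares bimodule twists''), so the one computation that actually produces the twist $\phi_g$ on the summand indexed by $g$ and isolates $g=m$ --- which is the entire content of (1) --- is missing.
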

 
Classically, when $A$ is a commutative polynomial ring and $G$ is a reflection
group, the covariant ring is clearly a complete intersection,
so the following is a natural question.  
\begin{question}
\label{xxque0.7}
Under the hypothesis of Theorem \ref{xxthm0.3} (or more generally for a semisimple Hopf algebra $H$), is 
the covariant ring 
$A^{cov \; H}$ a complete intersection of GK type
in the sense of \cite{KKZ4}?
\end{question}
This question is still open, even for $H = \Bbbk^G$.

 One key idea in this paper is to relate
the covariant ring with the so-called {\it Hasse} algebra (Definition \ref{xxdef2.3})
of the group $G$; this algebra has been considered before in the case that $G$ is a Coxeter group, and was called the nilCoxeter algebra by Fomin-Stanley in \cite{FS}. 

A secondary goal of this paper is to prove some rigidity results
for group coactions on some families of AS regular 
algebras. A remarkable rigidity theorem of Alev-Polo \cite[Theorem 1]{AP}
states:
Let ${\mathfrak g}$ and ${\mathfrak g}'$ be two semisimple Lie algebras. Let $G$ be 
a finite group of algebra automorphisms of $U({\mathfrak g})$ such that 
$U({\mathfrak g})^G \cong  U({\mathfrak g}')$. Then $G$ is trivial and 
${\mathfrak g} \cong {\mathfrak g}'$.
They also proved a rigidity theorem for the Weyl algebras
\cite[Theorem 2]{AP}.
The authors extended Alev-Polo's rigidity theorems to the 
graded case \cite[Theorem 0.2 and Corollary 0.4]{KKZ1}. 
For group coactions, we make the following definition.

\begin{definition}
\label{xxdef0.8}
Let $A$ be a connected graded algebra. We say that $A$ is 
{\it rigid with respect to group coaction} if for every 
finite group $G$ coacting on $A$ homogeneously and inner
faithfully, $A^{\Bbbk^G}$ is NOT isomorphic to $A$ as algebras.
\end{definition}

Our main result concerning the rigidity of the dual group action is the following result, which provides
a dual version of the rigidity results proved in 
\cite[Theorem 0.2 and Corollary 0.4]{KKZ1}. The proof is based on the 
structure results Theorems \ref{xxthm0.3} and \ref{xxthm0.5}.

\begin{theorem}
\label{xxthm0.9}
Let $\Bbbk$ be an algebraically closed field. 
The following AS regular algebras are rigid with respect to 
group coactions.
\begin{enumerate}
\item[(1)]
The homogenization of the universal
enveloping algebra of a finite dimensional semisimple Lie algebra 
$H({\mathfrak g})$.
\item[(2)]
The Rees ring of the Weyl algebra $A_n(\Bbbk)$ with respect to the standard
filtration.
\item[(3)]
The non-PI Sklyanin algebras of global dimension at least 3.
\end{enumerate}
\end{theorem}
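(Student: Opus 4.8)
\emph{Proof proposal.} The plan is to argue by contradiction. Suppose one of these algebras $A$ fails to be rigid, so that some nontrivial finite group $G$ coacts on $A$ homogeneously and inner faithfully with $B:=A^{\Bbbk^G}\cong A$ as algebras. Since $B\cong A$ is AS regular, the first thing I must check is that the \emph{inherited} grading on $A^H$ is again AS regular; granting this, $G$ coacts on $A$ as a dual reflection group and the full structural package of Theorems \ref{xxthm0.3} and \ref{xxthm0.5} becomes available. From Theorem \ref{xxthm0.3}(5) I extract the polynomial $\fp(t)=H_{A}(t)H_{A^H}(t)^{-1}$, a nontrivial product of cyclotomic polynomials with $\fp(1)=|G|\geq 2$ and $\deg\fp(t)=\lrr(m)\geq 1$; from Theorem \ref{xxthm0.5}(1) I obtain a genuinely nonconstant homogeneous normal element $f_m\in A$ of degree $\lrr(m)$, together with its conjugation automorphism $\phi_m$. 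The whole argument then reduces to showing that, for each of the three families, such an $f_m$ compatible with $A^H\cong A$ cannot exist.

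The first key step is to pin down the homogeneous normal elements and show $f_m$ must be \emph{central}. For the Rees ring this follows because $A_n(\Bbbk)$ is simple, forcing the center to be $\Bbbk[t]$ (with $t$ the degree-one homogenizing element) and every homogeneous normal element to be a scalar multiple of a power of $t$. For $H(\mathfrak{g})$ the center is generated by $t$ together with the homogenized Casimir elements, and again every homogeneous normal element is central. For the non-PI Sklyanin algebras the homogeneous normal elements lie in the polynomial center (generated in positive degree). In all three cases $f_m$ is central, so $\phi_m=\id$, and Theorem \ref{xxthm0.5}(2) then forces $\beta\equiv 1$ and $m\in Z(G)$.

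The second step converts centrality of $f_m$ into a contradiction. Because the coaction preserves the center, each central generator spans a one-dimensional subcomodule and hence lies in a single $A_{g_0}$. If all central generators are \emph{invariant}, then the whole center sits inside $A_e=A^H$; by Step one $f_m$ is then central, so $f_m\in A_e$, while by construction $f_m\in A_m$, and $A_m\cap A_e=0$ unless $m=e$. Thus $m=e$, whence $\deg\fp=\lrr(m)=0$, $\fp(t)=1$, and $|G|=\fp(1)=1$, a contradiction. (For $H(\mathfrak{g})$ the invariant case is exactly the case where $t$ is fixed, i.e. $G$ acts by honest Lie automorphisms, and then the canonical Casimirs are automatically fixed.) If instead some central generator is only \emph{semi-invariant}, I would separate $A^H$ from $A$ by a genuinely ungraded invariant, namely the abelianization $A/[A,A]$: for the Rees ring $A/[A,A]\cong \Bbbk[\underline X,\underline P]\otimes\Bbbk[t]/(t^2)$ is nonreduced, and the plan is to show the fixed ring's abelianization is reduced; for $H(\mathfrak{g})$ one compares against the reduced, reducible $\Bbbk[\underline x,t]/(\underline x\,t)$; and for the non-PI Sklyanin algebras one uses that the defining elliptic curve together with its translation automorphism of infinite order is recoverable up to isomorphism, so that the different Hilbert series forced by $\fp(t)\neq 1$ cannot be absorbed by any regrading.

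The hard part will be precisely this last, semi-invariant, case. The inequality $\fp(t)\neq 1$ by itself only excludes \emph{graded} isomorphisms $A^H\cong A$; the real content of rigidity is to exclude \emph{abstract} algebra isomorphisms, for which $A^H$ is an AS regular domain of the same global and GK dimension as $A$ and differs from $A$ only in finer, non-graded structure. Producing one abstract invariant that uniformly detects this difference is the crux. I expect the commutator filtration, through the structure of $A/[A,A]$ and the way the distinguished central element becomes nilpotent or splits off a component, to do the work for $H(\mathfrak{g})$ and the Rees ring, and the rigidity of the point scheme and defining automorphism to do the work for the non-PI Sklyanin algebras; a secondary technical obstacle, to be dispatched at the outset, is verifying that an abstract isomorphism $A^H\cong A$ indeed makes the inherited grading AS regular, so that Theorems \ref{xxthm0.3} and \ref{xxthm0.5} may be applied at all.
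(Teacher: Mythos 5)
There is a genuine gap, and it is the one you yourself flag as ``the hard part'': your argument never closes the semi-invariant case, and the tool the paper uses to close it is missing from your proposal. The key fact you do not use is that $f_m$ factors as a product of \emph{degree-one} elements: writing $m=g_1\cdots g_w$ with $g_i\in\Re$ and $w=\lrr(m)$, the relation $f_gf_h=c_{g,h}f_{gh}$ with $c_{g,h}$ a nonzero \emph{scalar} whenever $\lrr(gh)=\lrr(g)+\lrr(h)$ gives $f_m=c\,f_{g_1}\cdots f_{g_w}$ with each $\deg f_{g_i}=1$ (Lemma \ref{xxlem5.1}). This matters decisively. For the non-PI Sklyanin algebras your Step~1 produces no contradiction at all: these algebras \emph{do} have nonconstant homogeneous normal (indeed central) elements --- e.g.\ the canonical degree-$n$ element $g$ with $A/(g)\cong B(E,\sigma,\mathcal L)$ --- so knowing only that some $f_m$ of positive degree is normal and central rules out nothing, and your appeal to ``recoverability of the elliptic curve'' is not an argument. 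What is impossible is a \emph{normal product of nonzero degree-one elements}: such a product has nonzero normal image in the projectively simple, GK-dimension-two domain $B(E,\sigma,\mathcal L)$, a contradiction (Lemma \ref{xxlem5.5}). Without the factorization you cannot reach this.

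For $H(\mathfrak g)$ and the Rees ring the factorization again does the work your abelianization sketch is meant to do: Lemmas \ref{xxlem5.3} and \ref{xxlem5.4} force every $f_{g_i}$ to be a scalar multiple of $t$, hence (since the $f_g$ for distinct $g$ lie in distinct $G$-components) $\Re$ is a single element, $G$ is cyclic, $\Bbbk^G$ is a group algebra, and one lands in the already-proved group-\emph{action} rigidity of \cite[Theorem 2.4, Lemma 6.5, Proposition 6.7, Corollary 6.8]{KKZ1}; no analysis of $A/[A,A]$ or of ``abstract invariants'' is needed, and note that for the Rees ring the correct conclusion is not that no $G$ gives an AS regular fixed ring but that only $G=\mathbb Z/(2)$ does, with $A^{\Bbbk^G}\not\cong A$ by \cite[Corollary 6.8]{KKZ1}. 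Your preliminary worry about abstract versus graded isomorphisms is legitimate but is absorbed by the paper's strategy of proving the stronger statements that $A^{\Bbbk^G}$ is never AS regular (cases (1) and (3)) or reducing to the cited group-action results (case (2)), together with the grading-independence of AS regularity for connected graded noetherian algebras. As written, your proposal correctly assembles the structural inputs (Theorems \ref{xxthm0.3} and \ref{xxthm0.5}) but does not contain a proof of any of the three cases.
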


\begin{remark}
\label{xxrem0.10}
By using ideas and results in \cite{EW2}, one can show that 
there is no nontrivial semisimple Hopf algebra actions on the 
algebras listed in Theorem \ref{xxthm0.9}. Combining
\cite[Theorem 0.2 and Corollary 0.4]{KKZ1} with Theorem \ref{xxthm0.9}, one obtains that
each algebra $A$  in Theorem \ref{xxthm0.9} is rigid with respect 
to semisimple Hopf algebra actions, or, equivalently, if $H$ is a 
semisimple Hopf algebra acting on $A$ inner faithfully and 
$A^H\cong A$, then $H=\Bbbk$.  
\end{remark}

Based on the above remark, we have an immediate question:

\begin{question}
\label{xxque0.11} 
Let $A$ be an algebra that is rigid with respect to finite group 
actions, see \cite{KKZ1}. Is $A$ rigid with respect to 
any semisimple Hopf algebra action? 
\end{question}

This paper is organized as follows. We provide background material 
on Artin-Schelter regular algebras, the Nakayama automorphism, and local 
cohomology  in Section 1. We study the Hasse algebra and Poincar{\'e}
polynomials of a finite group in Section 2. 
In Section 3, we prove some basic properties concerning 
dual reflection groups. The results about the Nakayama automorphisms
are proved in Section 4, and the proofs of Theorems \ref{xxthm0.3}, \ref{xxthm0.4}, \ref{xxthm0.5}, and \ref{xxthm0.6} appear at the end of Section 4.  Theorem \ref{xxthm0.9}, about the rigidity of the dual group action, 
is proved in Section 5.

\section{Preliminaries}
\label{xxsec1}

An algebra $A$ is called {\it connected graded} if
$$A=\Bbbk \oplus A_1\oplus A_2\oplus \cdots$$
and $A_iA_j\subseteq A_{i+j}$ for all $i,j\in {\mathbb N}$.
The Hilbert series of $A$ is defined to be
$$H_A(t)=\sum_{i\in {\mathbb N}} (\dim_{\Bbbk} A_i)t^i.$$
The algebras that we use to replace commutative polynomial rings
are the AS regular algebras \cite{AS}. We recall the definition
below.

\begin{definition}
\label{xxdef1.1}
A connected graded algebra $A$ is called {\it Artin-Schelter Gorenstein} 
(or {\it AS Gorenstein}, for short) if the following conditions hold:
\begin{enumerate}
\item[(a)]
$A$ has injective dimension $d<\infty$ on
the left and on the right,
\item[(b)]
$\Ext^i_A(_A\Bbbk,_AA)=\Ext^i_{A}(\Bbbk_A,A_A)=0$ for all
$i\neq d$, and
\item[(c)]
$\Ext^d_A(_A\Bbbk,_AA)\cong \Ext^d_{A}(\Bbbk_A,A_A)\cong \Bbbk(\bfl)$ for some
integer $\bfl$. Here $\bfl$ is called the {\it AS index} of $A$.
\end{enumerate}
If in addition,
\begin{enumerate}
\item[(d)]
$A$ has finite global dimension, and
\item[(e)]
$A$ has finite Gelfand-Kirillov dimension,
\end{enumerate}
then $A$ is called {\it Artin-Schelter regular} (or {\it AS
regular}, for short) of dimension $d$.
\end{definition}

Let $M$ be an $A$-bimodule, and let $\mu, \nu$ be algebra 
automorphisms of $A$. Then ${^\mu M^\nu}$ denotes the 
induced $A$-bimodule such that ${^\mu M^\nu}=M$ 
as a $\Bbbk$-space, and where
$$a * m * b=\mu(a)m\nu(b)$$
for all $a,b\in A$ and $m\in {^\mu M^\nu}(=M)$.
Let $1$ be the identity of $A$. We also use
${^\mu M}$ (respectively, ${M^\nu}$)
for ${^\mu M^1}$ (respectively, ${^1 M^\nu}$).

Let $A$ be a connected graded finite dimensional algebra. We say 
$A$ is a {\it Frobenius} algebra if there is a nondegenerate 
associative bilinear form 
$$\langle -,- \rangle: A\times A \to \Bbbk,$$
which is graded of degree $-\bfl$. This is equivalent to the 
existence of an isomorphism $A^*\cong A(-\bfl)$ as graded left 
(or right) $A$-modules. There is a (classical) graded Nakayama 
automorphism $\mu\in \Aut(A)$ such that
$\langle a,b \rangle=\langle \mu(b), a\rangle$
for all $a,b\in A$. Further, $A^*\cong {^\mu A^1}(-\bfl)$
as graded $A$-bimodules. 
A connected graded AS Gorenstein algebra of injective dimension 
zero is exactly a connected graded Frobenius algebra. 
The Nakayama automorphism is also
defined for certain classes of infinite dimensional algebras;
see the next definition.

\begin{definition}
\label{xxdef1.2}
Let $A$ be an algebra over $\Bbbk$, and let $A^e = A \otimes A^{op}$.
\begin{enumerate}
\item[(1)]
$A$ is called {\it skew Calabi-Yau} (or {\it skew CY}, for short) if
\begin{enumerate}
\item[(a)]
$A$ is homologically smooth, that is, $A$ has a projective resolution 
in the category $A^e$-Mod that has finite length and such that each 
term in the projective resolution is finitely generated, and
\item[(b)]
there is an integer $d$ and an algebra automorphism $\mu$ of $A$ 
such that
\begin{equation}
\label{E1.2.1}\tag{E1.2.1}
\Ext^i_{A^e}(A,A^e)=\begin{cases} 0 & i\neq d\\
{^1 A^\mu} & i=d,\end{cases}
\end{equation}
as $A$-bimodules, where $1$ denotes the identity map of $A$.
\end{enumerate}
\item[(2)]
If \eqref{E1.2.1} holds for some algebra automorphism $\mu$ 
of $A$, then $\mu$ is called the {\it Nakayama automorphism} 
of $A$, and is usually denoted by $\mu_A$. 
\item[(3)]
We call $A$ {\it Calabi-Yau} (or {\it CY}, for short) if 
$A$ is skew Calabi-Yau and $\mu_A$ is inner (or equivalently, 
$\mu_A$ can be chosen to be the identity map after changing 
the generator of the bimodule ${^1 A^\mu}$).
\end{enumerate}
\end{definition}

If $A$ is connected graded, the above definition should be made 
in the category of graded modules and \eqref{E1.2.1}
should be replaced by
\begin{equation}
\label{E1.2.2}\tag{E1.2.2}
\Ext^i_{A^e}(A,A^e)=\begin{cases} 0 & i\neq d\\
{^1 A^\mu}(\bfl) & i=d,\end{cases}
\end{equation}
where ${^1 A^\mu}(\bfl)$ is the shift of ${^1 A^\mu}$ by degree $\bfl$.

We will use local cohomology later.
Let $A$ be a locally finite ${\mathbb N}$-graded algebra and 
$\fm$ be the graded ideal $A_{\geq 1}$. Let $A$-$\GrMod$ denote 
the category of ${\mathbb Z}$-graded left $A$-modules. For 
each graded left $A$-module $M$, we define
$$\Gamma_{\fm}(M) =\{ x\in M\mid A_{\geq n} x=0 \; {\text{for some $n\geq 1$}}\;\}
=\lim_{n\to \infty} \Hom_A(A/A_{\geq n}, M)$$
and call this the $\fm$-torsion submodule of $M$. It is 
standard that the functor $\Gamma_{\fm}(-)$ is a left 
exact functor from $A$-$\GrMod$ to itself. Since this category 
has enough injectives, the right derived functors 
$R^i\Gamma_{\fm}$ are defined and called the local 
cohomology functors. Explicitly, one has 
$$R^i\Gamma_{\fm}(M)=\lim_{n\to \infty} \Ext^i_A(A/A_{\geq n}, M).$$ 
See \cite{AZ, VdB} for more details.

The Nakayama automorphism of a noetherian AS regular algebra can be 
recovered by using local cohomology \cite[Lemma 3.5]{RRZ2}:
\begin{equation}
\label{E1.2.3}\tag{E1.2.3}
R^d \Gamma_{\fm} (A)^*\cong {^\mu A^1}(-\bfl)
\end{equation}
where $\bfl$ is the AS index of $A$.

\section{Poincar{\'e}  polynomials and Hasse algebras}
\label{xxsec2}

Let $G$ be a finite group, though some of the definitions make
sense in the infinite case. Let $e$ be the unit of $G$. 
We say $\Re\subseteq G$ is a set of generators of $G$, if $\Re$ 
generates $G$ and $e\not\in \Re$ ($\R$ need not be a minimal set of generators of $G$). We recall some definitions.

\begin{definition}
\label{xxdef2.1}
Let $\Re$ be a set of generators of a group $G$.
The {\it length} of an element $g\in G$ with respect to 
$\Re$ is defined to be 
$$\lrr(g):=\min\{ n \mid v_1\cdots v_n=g, \; {\text{for some $v_i\in \Re$}}\}.$$
We define the length of the identity $e$ to be 0.  The length $\lrr(g)$ is also called {\it reduced length} by some authors.
\end{definition}

\begin{definition}
\label{xxdef2.2}
Let $\Re$ be a set of generators of $G$.
The {\it Poincar{\'e} polynomial} associated to $\Re$ is defined to be
$$\fp_\Re(t)=\sum_{g\in G} t^{\lrr(g)}.$$
\end{definition}

The notion of length of a group element with respect to $\Re$, 
where $\Re$ is the set of Coxeter generators, is standard for 
Coxeter groups \cite[p.15]{BjB}, and its generating function 
is called the Poincar\'{e} polynomial in \cite[p. 201]{BjB}.  
There is also interest in the generating function for other groups 
(e.g. the alternating group \cite[p. 3]{Ro} and \cite[p. 849]{BRR});  
the article \cite{He} gives a survey of recent progress in determining 
the order of magnitude of the maximal length of an element of a finite 
group $G$ with respect to {\it any} generating set of $G$ 
(i.e. the maximal degree of any Poincar\'{e} polynomial of the group) 
for many families of linear algebraic groups and permutation groups 
(often with respect to $\Re  \cup \Re^{-1}$).

If there is no confusion (for example, $\Re$ is fixed), 
we might use $\fp(t)$ instead of $\fp_\Re(t)$.

\begin{definition}
\label{xxdef2.3} Let $\Re$ be a set of generators of a group $G$.
\begin{enumerate}
\item[(1)]
The {\it Hasse algebra} associated to $\Re$, denoted
by ${\mathcal H}_G(\Re)$,  is the associated 
graded algebra of the group algebra $\Bbbk G$ with respect to the generating
space $\Bbbk e+\Bbbk \Re$,
$${\mathcal H}_G(\Re):=\bigoplus_{i=0}^{\infty} 
(\Bbbk e+\Bbbk \Re)^i/(\Bbbk e+\Bbbk \Re)^{i-1}$$
where $(\Bbbk e+\Bbbk \Re)^0=\Bbbk $ and $(\Bbbk e+\Bbbk \Re)^{-1}=0$.
\item[(1')]
Equivalently, the {\it Hasse algebra} associated to $\Re$, denoted
by ${\mathcal H}_G(\Re)$, is the
associative algebra with $\Bbbk$-linear basis $G$ together with 
multiplication determined by 
$$ g\cdot h=\begin{cases} gh & \lrr(gh)=\lrr(g)+\lrr(h),\\
0& \lrr(gh)<\lrr(g)+\lrr(h).\end{cases}$$
\item[(2)]
A  {\it skew Hasse algebra} associated to $\Re$ is an 
associative algebra with $\Bbbk$-linear basis $G$ together with 
multiplication determined by 
$$ g\cdot h=\begin{cases} \alpha_{g,h} gh & \lrr(gh)=\lrr(g)+\lrr(h),\\
0& \lrr(gh)<\lrr(g)+\lrr(h),\end{cases}$$
where $\{\alpha_{g,h}\mid \lrr(gh)=\lrr(g)+\lrr(h)\}$ is a set of 
nonzero scalars in $\Bbbk$.
\end{enumerate}
\end{definition}

\begin{remark}
\label{xxrem2.4}
\begin{enumerate}
\item[(1)]
It is easy to see that Definitions \ref{xxdef2.3}(1) and \ref{xxdef2.3}(1') 
of the Hasse algebra are equivalent.
\item[(2)]
The first definition can be generalized to any finite dimensional Hopf 
algebra $K$ as follows: Let $V$ be a generating $\Bbbk$-space of the 
associative algebra $K$ with $1\not\in V$ that
is also a right coideal of $K$ (namely, we have $\Delta: V\to V\otimes K$). 
The {\it Hasse algebra} associated to $\Re$ is defined to be
$${\mathcal H}_K(V):=\bigoplus_{i=0}^{\infty} (\Bbbk+V)^i/(\Bbbk+V)^{i-1}.$$
On the other hand, the second definition seems easier to understand 
in the group case.
\item[(3)]
Following the definition, ${\mathcal H}_G(\Re)$ is a connected
graded algebra with $\deg (g)=\lrr(g)$ for all $g\in G$.  
\item[(4)]
The Poincar{\'e} polynomial $\fp_\Re(t)$ is the Hilbert series 
of the graded algebra ${\mathcal H}_G(\Re)$.
\item[(5)]
The Hasse algebra was called the {\it nilCoxeter algebra} by
Fomin-Stanley \cite{FS} for Coxeter group $G$ with 
Coxeter generating set $\Re$. See \cite{Al} for some
related results. 
\end{enumerate}
\end{remark}

We begin with a few examples.

\begin{example}
\label{xxex2.5} 
Let $G$ be the quaternion group $\{\pm 1, \pm i, \pm j, \pm k\}$
with 
$$ij=k=-ji, \quad i^2=j^2=k^2=-1.$$ Let $\Re$ be a generating set 
$\{x_1:=i, x_2:=j, x_3:=-j\}$. 
Then elements of length two are $\{y_1:=k, y_2:= -k, y_3=-1\}$
and the unique element of length 3 is $z:=-i$. The Poincar{\'e} 
polynomial is $\fp_\Re(t)=1+3t+3t^2+t^3=(1+t)^3$ and the Hasse algebra 
is ${\mathcal H}:={\mathcal H}_G(\Re)=\Bbbk \oplus  \Bbbk x_1\oplus \Bbbk x_2
\oplus \Bbbk x_3\oplus \Bbbk y_1\oplus \Bbbk y_2\oplus \Bbbk y_3\oplus \Bbbk z$ 
with multiplication determined as in Definition \ref{xxdef2.3}(1').
Explicitly, we have

\begin{center}
\begin{tabular}{|c|c|c|c|c|c|c|c|}\hline
multiplication & $x_1$ & $x_2$ & $x_3$ & $y_1$ & $y_2$ & $y_3$ & $z$ 
\\ \hline
$x_1$ &  $y_3$ & $y_1$ & $y_2$ & 0 & 0   & $z$ & 0 \\ \hline
$x_2$ &  $y_2$ & $y_3$ & 0     & 0 & $z$ & 0   & 0\\ \hline
$x_3$ &  $y_1$ & 0     & $y_3$ & $z$ & 0 & 0   & 0\\ \hline
$y_1$ &  0     & $z$   & 0     & 0& 0& 0& 0\\ \hline
$y_2$ &  0     & 0     & $z$   & 0& 0& 0& 0\\ \hline
$y_3$ &  $z$   & 0     & 0     & 0& 0& 0& 0\\ \hline
$z$ &  0 &0  &0  &0  &0  &0  &0 \\ \hline
\end{tabular}
\end{center}

It is easy to see that this is a non-symmetric Frobenius algebra and the Nakayama
automorphism of this algebra is determined by
$$\mu_{\mathcal H}: x_1\mapsto x_1, \; x_2\mapsto x_3, \; x_3\mapsto x_2
$$
which has order 2. 

With some effort one can show that ${\mathcal H}$ is a Koszul algebra
and its Koszul dual ${\mathcal E}$ is the AS regular algebra $\Bbbk\langle a_1,a_2,a_3\rangle
/(a_1a_2+a_3a_1, a_1a_3+a_2a_1, a_1^2+a_2^2+a_3^2)$. Therefore the 
GK-dimension of the Ext-algebra of ${\mathcal H}$ is 3.
\end{example}

\begin{example}
\label{xxex2.6} 
Let $G$ be the symmetric group $S_3$ and $\Re$ be the generating set 
$\{x_1:=(12), x_2:=(23)\}$. 
Then elements of length two are $\{y_1:=(123), y_2:=(132)\}$
and the only element of length 3 is $z:=(13)$. The Poincar{\'e} 
polynomial is $\fp_\Re(t)=1+2t+2t^2+t^3$ and the Hasse algebra 
is ${\mathcal H}:={\mathcal H}_G(\Re)=\Bbbk  \oplus  \Bbbk  x_1
\oplus \Bbbk  x_2\oplus \Bbbk  y_1\oplus \Bbbk  y_2\oplus \Bbbk  z$ 
with multiplication determined by

\begin{center}
\begin{tabular}{|c|c|c|c|c|c|}\hline
multiplication & $x_1$ & $x_2$ & $y_1$ & $y_2$ & $z$ 
\\ \hline
$x_1$ &  0 & $y_1$ &  0   & $z$ & 0 \\ \hline
$x_2$ &  $y_2$ & 0 &  $z$ & 0   & 0\\ \hline
$y_1$ &  $z$    & 0  &  0& 0& 0\\ \hline
$y_2$ &  0      & $z$    &  0& 0& 0\\ \hline
$z$ &  0 &0  &0  &0    &0 \\ \hline
\end{tabular}
\end{center}

This is a non-symmetric Frobenius algebra and the Nakayama
automorphism of this algebra is determined by the map of interchanging $x_1$ 
with $x_2$. It is easy to see that 
${\mathcal H}$ is isomorphic to $k\langle a,b\rangle
/(a^2, b^2, aba-bab)$ which is not a Koszul algebra. 
The Ext-algebra ${\mathcal E}$ of the ${\mathcal H}$, which 
was computed in \cite[Theorem 3.3]{SV}, has 
GK-dimension 2.
\end{example}

\begin{example}
\label{xxex2.7}
Let $G$ be the dihedral group $D_{2n}$ with $2n$ elements, and let 
$s_1 = r$ and $s_2 = r\rho$ be the usual Coxeter generators where $r^2 = \rho^n = e$ and $\rho r = r \rho^{-1}$. 
The Poincar{\'e} polynomial of $G$ with this choice of generating set is
$$\fp_\Re(t)=1+2t+2t^2+\cdots+ 2t^{n-1}+t^{n-1}= (1+t)(1+t+t^2+\cdots + t^{n}).$$
So the Hasse algebra has the following elements in each degree:

degree 1: $r, r\rho$;

degree 2: $\rho^{-1}, \rho$;

degree 3: $r\rho^{-1}, r\rho^2$;

degree 4: $\rho^{-2}, \rho^2$; etc.

When $n$ is odd the unique largest length element is $r\rho^{\frac{n+1}{2}}$,
while  when $n$ is even it is $\rho^{\frac{n}{2}}$, which is central.
Hence, in the case when $n$ is even, ${\mathcal H}_{D_{2n}}(\Re)$ is a symmetric 
Frobenius algebra, but a non-symmetric 
Frobenius algebra when $n$ is odd.
\end{example}

Based on the above examples, here are some natural questions.

\begin{question}
\label{xxque2.8}
\begin{enumerate}
\item[(1)]
When is ${\mathcal H}_G(\Re)$ a Frobenius algebra?
\item[(2)]
When is ${\mathcal H}_G(\Re)$ a symmetric Frobenius algebra?
\item[(3)]
When is ${\mathcal H}_G(\Re)$ a Koszul (or $N$-Koszul) algebra?
\item[(4)]
Let ${\mathcal E}_{G}(\Re)$ be the Ext-algebra of the connected graded 
algebra ${\mathcal H}_G(\Re)$.
What is the GK-dimension of ${\mathcal E}_{G}(\Re)$ in terms of $(G,\Re)$? 
\end{enumerate}
\end{question}

We give an answer to the first question in the next theorem. 
A polynomial $p(t)=\sum_{i=0}^n a_i t^i$ 
of degree $n$ 
is called {\it palindrome} if $a_i=a_{n-i}$ for all $i$. If ${\mathcal H}_G(\Re)$ 
is Frobenius, then $\fp_\Re(t)$ is palindrome. As a consequence, the leading 
coefficient of $\fp_\Re(t)$ is 1.

\begin{theorem}
\label{xxthm2.9}
Let $\Re$ be a set of generators of $G$. Suppose that 
$\fp_\Re(t)$ is palindrome. Then the following hold.
\begin{enumerate}
\item[(1)]
${\mathcal H}_G(\Re)$ is Frobenius.
\item[(2)]
Let $\mu$ be the graded Nakayama automorphism of ${\mathcal H}_G(\Re)$.
Then $\mu$ is determined by a permutation of the elements in $G$ of length 1.
\end{enumerate}
\end{theorem}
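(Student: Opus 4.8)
The plan is to build an explicit Frobenius form as the projection onto the top graded piece, and to reduce both parts of the theorem to one combinatorial identity that the palindrome hypothesis forces. First I would record two elementary facts about the length function: it is subadditive, $\lrr(gh)\le \lrr(g)+\lrr(h)$ (concatenate reduced words), and $\lrr(g)=1$ precisely for $g\in\Re$. The latter, together with the observation that a reduced word $g=v_1\cdots v_k$ gives $v_1\cdot(v_2\cdots v_k)=g$ in $\mathcal H_G(\Re)$ (the lengths add, so the product is nonzero), shows that $\mathcal H_G(\Re)$ is generated in degree $1$. Let $n=\deg\fp_\Re(t)$ be the maximal length. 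Since $\fp_\Re$ is palindrome with constant term $1$ (only $e$ has length $0$), its leading coefficient is also $1$, so there is a \emph{unique} element $w\in G$ of maximal length $n$.

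The crux, which I would isolate as a lemma, is the identity
\[
\lrr(g)+\lrr(g^{-1}w)=n\qquad\text{for all }g\in G .
\]
Subadditivity gives $n=\lrr(w)=\lrr(g\cdot g^{-1}w)\le \lrr(g)+\lrr(g^{-1}w)$, so each left-hand side is at least $n$. On the other hand the palindrome condition $a_i=a_{n-i}$ forces $\sum_{g}\lrr(g)=\tfrac{n}{2}|G|$, and since $g\mapsto g^{-1}w$ is a bijection of $G$ one also has $\sum_{g}\lrr(g^{-1}w)=\tfrac{n}{2}|G|$. Thus $\sum_{g}\bigl(\lrr(g)+\lrr(g^{-1}w)\bigr)=n|G|$ is a sum of $|G|$ terms each $\ge n$, forcing every term to equal $n$. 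This averaging step carries the real content of the theorem, and it is the place where I expect the genuine difficulty to lie; the rest is formal.

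Granting the identity, part (1) follows quickly. Let $\lambda\colon\mathcal H_G(\Re)\to\Bbbk$ pick out the coefficient of $w$, and put $\langle a,b\rangle=\lambda(ab)$; this is associative and graded of degree $-n$ since $w$ lies in degree $n$. On the basis $G$ one gets $\langle g,h\rangle=1$ if $gh=w$ and $0$ otherwise: $g\cdot h=w$ in $\mathcal H_G(\Re)$ requires $gh=w$ with $\lrr(g)+\lrr(h)=n$, and the identity guarantees that the unique candidate $h=g^{-1}w$ meets the length condition. Hence the Gram matrix in the basis $G$ has exactly one nonzero entry, equal to $1$, in each row and in each column (row $g$ pairs with column $g^{-1}w$), so it is a permutation matrix and the form is nondegenerate. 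Therefore $\mathcal H_G(\Re)$ is Frobenius of index $n$.

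For part (2) I would solve the defining relation $\langle\mu(h),g\rangle=\langle g,h\rangle$ for the graded Nakayama automorphism. Using $\langle g,h\rangle=[\,gh=w\,]$ and matching coefficients in the basis expansion of $\mu(h)$ forces $\mu(h)=w\,h\,w^{-1}$, i.e. $\mu$ is conjugation by $w$. That this is a graded automorphism amounts to conjugation by $w$ preserving length, which I would deduce from the identity: the bijection $L\colon g\mapsto wg^{-1}$ satisfies $\lrr(L(g))=n-\lrr(g)$ by the same averaging argument, and $L^2(g)=wgw^{-1}$, so $\lrr(wgw^{-1})=n-\lrr(L(g))=\lrr(g)$. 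Thus $\mu$ is a length-preserving group automorphism; in particular it permutes the length-$1$ elements $\Re$, and since $\mathcal H_G(\Re)$ is generated in degree $1$, $\mu$ is completely determined by this permutation. As a sanity check this reproduces the Nakayama automorphisms computed in Examples \ref{xxex2.5} and \ref{xxex2.6}: conjugation by $w=-i$ fixes $i$ and swaps $\pm j$, and conjugation by $w=(13)$ swaps $(12)$ and $(23)$.
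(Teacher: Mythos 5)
Your proposal is correct, and it reaches the key combinatorial fact by a genuinely different route than the paper. The paper proves, by induction on $i$, a three-part claim: each length-$i$ element $x_{ij}$ pairs with the unique length-$(d-i)$ element $x_{ij}^{-1}m$ to give $m$ in the Hasse algebra, all other products into degree $d$ vanish, and every length-$(d-i)$ element arises this way; the palindrome hypothesis is invoked at each inductive step to match the cardinalities $n_i=n_{d-i}$. You replace that induction by a single global averaging argument: subadditivity gives $\lrr(g)+\lrr(g^{-1}w)\ge n$ termwise, while the palindrome condition gives $\sum_g\bigl(\lrr(g)+\lrr(g^{-1}w)\bigr)=2\sum_i i\,a_i=n|G|$, forcing equality in every term. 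This is shorter, avoids the induction entirely, and the identity $\lrr(g)+\lrr(g^{-1}w)=n$ (together with its left-handed form, obtained by substituting $g=wh^{-1}$) recovers all three parts of the paper's claim at once. A further payoff of your route is that solving $\langle\mu(h),g\rangle=\langle g,h\rangle$ against the permutation Gram matrix yields the explicit formula $\mu(h)=whw^{-1}$ and its length-preservation directly, which the paper only establishes later as Theorem \ref{xxthm2.10}(1,2); the paper's proof of part (2) stops at the weaker statement that $\mu^{-1}$ is determined by the permutation $x_{1j}\mapsto x_{1\sigma(j)}$ coming from \eqref{E2.9.1}. Both arguments use the same Frobenius form, namely the projection onto the span of the top-degree element composed with multiplication, so the two proofs agree from that point on.
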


\begin{proof} Since $\fp_\Re(t)$ is palindrome, the coefficient 
of the leading term of $\fp_\Re(t)$ is 1. This means that there
is a unique element in $G$ of the maximal length. Let $\cdot$ 
be the multiplication of ${\mathcal H}_G(\Re)$. The group product
is suppressed.

Let $m\in G$ be the unique element of the maximal 
length $d$. (Later we will see that $m$ agrees with the mass element
in Definition \ref{xxdef0.2}.)
Let $\{x_{i1},\cdots x_{in_i}\}$ be the complete list of 
elements in $G$ of length $i$ for $1\leq i\leq d-1$. We now prove the 
following {\bf claim}: 
\begin{enumerate}
\item[(a)]
For each $x_{ij}$, there is a unique element of length 
$s:=d-i$, say $x_{sw}$, such that $x_{ij}\cdot x_{sw}=x_{ij} x_{sw}=m$ 
where $x_{sw}=x_{ij}^{-1}m$ as in $G$. 
\item[(b)]
If $t\neq w$, then $x_{ij}\cdot x_{st}=0$.
\item[(c)]
Every element 
of length $d-i$ is of the form $x_{ij}^{-1}m$ (for different $j$). 
\end{enumerate}
Consequently, for any  $t\neq w$ (where $x_{sw}$ is defined as in (a)), 
$x_{ij}\cdot x_{st}=0$. 
We prove the claim by induction on $i$. Initial case $i=1$:
For each fixed $j\leq n_1$, we consider $x_{1j}^{-1} m$. Since $x_{1j}\neq e$,
$x_{1j}^{-1}m\neq m$. This means that $\lrr(x_{1j}^{-1}m)\leq d-1$.
Since $\lrr(m)=d$ and $\lrr(x_{1j})=1$, $\lrr(x_{1j}^{-1}m)\geq d-1$. Thus
$\lrr(x_{1j}^{-1}m)= d-1$ and $x_{1j}^{-1}m=x_{d-1w}$ for some 
$w\leq n_{d-1}$. We have $x_{1j}\cdot x_{d-1w}=x_{1j}x_{d-1w}=m$.
For any $t\neq w$, $x_{1j}x_{d-1t}\neq m$, so 
$x_{1j}\cdot x_{d-1 t}=0$ by definition. Since $\fp_\Re(t)$ is 
palindrome, every element of length $d-1$ is of the form $x_{1j}^{-1}m$. 
Inductive step: Let $i\geq 1$. Suppose the claim holds for $i'\leq i$.
Pick a $x_{i+1j}$ for any fixed $j$, then $x_{i+1j}^{-1}m$ is not equal to any 
$x_{i'j'}^{-1}m$ for all $i'\leq i$. Therefore $x_{i+1j}^{-1}m$ is not
of the form $x_{d-i'j'}$ for all $i'\leq i$ and all $j'\leq n_{d-i'}$.
As a consequence, $\lrr(x_{i+1j}^{-1}m)\leq d-i-1$. Since 
$m=x_{i+1j}(x_{i+1j}^{-1}m)$, $\lrr(x_{i+1j}^{-1}m)\geq d-i-1$. 
Thus $\lrr(x_{i+1j}^{-1}m)=d-i-1$. Write $x_{i+1j}^{-1}m$ as $x_{d-i-1w}$.
For any $t\neq w$, $x_{i+1j} x_{d-i-1t}\neq m$. By definition,
$x_{i+1j}\cdot x_{d-i-1t}=0$. Since $\fp_\Re(t)$ is palindrome, every element 
of length $d-(i+1)$ is of the form $x_{(i+1)j}^{-1}m$ for some $j$. 
Therefore we proved the claim. 

(1) Recall that, if there is a nondegenerate associative bilinear form
$$\langle -, -\rangle: A\times A \to k$$ 
and an algebra automorphism $\mu$ of $A$ 
such that 
$$\langle a,b\rangle=\langle \mu(b), a\rangle(=\langle b, \mu^{-1}(a)\rangle)$$ 
for all $a,b\in A$,
then $A$ is a Frobenius algebra and $\mu$ is a Nakayama automorphism of $A$. 
The Nakayama
automorphism always exists for Frobenius algebras and the 
graded Nakayama automorphism is unique for 
a connected graded Frobenius algebra. 

Note that ${\mathcal H}_G(\Re)$ is graded with $m$ having 
the highest degree. Let ${\text{pr}}_{\Bbbk m}$ be the projection to 
the highest degree component of ${\mathcal H}_G(\Re)$, and define $\langle a,b\rangle={\text{pr}}_{\Bbbk m}(ab)$.
Then the {\bf claim} implies that $\langle a,b\rangle$ is a nondegenerate
associative bilinear form. Therefore ${\mathcal H}_G(\Re)$ is Frobenius.

(2) 
By the {\bf claim}, there is a permutation $\sigma\in S_{n_1}$ such that
\begin{equation}
\label{E2.9.1}\tag{E2.9.1}
x_{1j} \cdot x_{d-1 w}=m =x_{d-1 w} \cdot x_{1\sigma(j)}
\end{equation}
for all $1\leq j\leq n_1$. Using this observation we see  that 
$\mu^{-1}$ maps $x_{1j}$ to $x_{1\sigma(j)}$. Since
${\mathcal H}_G(\Re)$ is generated in degree 1, $\mu$ 
is completely determined by
$\sigma^{-1}$.
\end{proof}

The permutation given in Theorem \ref{xxthm2.9}(2) 
is called the {\it Nakayama permutation} of ${\mathcal H}_G(\Re)$.
In the setting of Theorem \ref{xxthm2.9}, there is the unique element 
in $G$ of the maximal length with respect to the length function $l$. 
We give an answer to Question \ref{xxque2.8}(2) next.

\begin{theorem}
\label{xxthm2.10} Suppose $\fp_\Re(t)$ is palindrome. 
Let $m$ be the unique element of the maximal length. 
Let $\mu$ be the graded Nakayama automorphism of 
${\mathcal H}_G(\Re)$.
\begin{enumerate}
\item[(1)]
$\mu$ permutes the elements of $G$ and preserves 
the length.
\item[(2)]
$\mu^{-1}$ equals the conjugation $\eta_{m}: g\to m^{-1} g m$
when applied to the basis element $g\in G$.
\item[(3)]
${\mathcal H}_G(\Re)$ is symmetric if and only if $m$ is central.
\item[(4)]
Suppose $\fp_\Re(t)=1+a t+ \sum_{i\geq 2} a_i t^i$. Then 
$m^n$ is central for some $n$ dividing $a!$.
\item[(5)] 
The conjugation $\eta_{m}$ in part {\rm{(2)}} preserves the length of 
$g\in G$.
\end{enumerate}
\end{theorem}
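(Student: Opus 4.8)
The plan is to reduce every assertion to the explicit identification $\mu = \phi_m$, where $\phi_m$ denotes conjugation $g \mapsto mgm^{-1}$, and then to read off each part as a consequence. First I would record the combinatorial input already extracted in the proof of Theorem~\ref{xxthm2.9}: writing $d=\lrr(m)$, the assignment $R\colon g\mapsto g^{-1}m$ is a bijection of $G$ with $\lrr(R(g))=d-\lrr(g)$ for every $g$ (this is parts (a) and (c) of the \textbf{claim} together with palindromicity, which makes the correspondence $g\leftrightarrow g^{-1}m$ length-reversing exactly). Equivalently, in the $\Bbbk$-basis $G$ the associative form $\langle a,b\rangle=\mathrm{pr}_{\Bbbk m}(a\cdot b)$ of Theorem~\ref{xxthm2.9}(1) has a permutation matrix: $\langle g,h\rangle=1$ precisely when $h=g^{-1}m$ (equivalently $g=mh^{-1}$), and $\langle g,h\rangle=0$ otherwise.

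For part (2) I would compute the Nakayama automorphism straight from the defining identity $\langle a,b\rangle=\langle\mu(b),a\rangle$. Fixing basis elements $g,h\in G$ and expanding $\mu(h)=\sum_{g'}c_{g'}\,g'$, the permutation-matrix description gives $\langle\mu(h),g\rangle=c_{mg^{-1}}$, while $\langle g,h\rangle=1$ iff $g=mh^{-1}$; comparing these for all $g$ forces $c_{g'}=1$ exactly when $g'=mhm^{-1}$. Hence $\mu(h)=mhm^{-1}=\phi_m(h)$ on each basis element, so $\mu=\phi_m$ and $\mu^{-1}=\eta_m$, which is part (2). Part (1) is then immediate: $\phi_m$ is a group automorphism of $G$, so it permutes the basis $G$, and it preserves length by part (5).

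Part (5) I would prove purely combinatorially via the length-reversing involution: since $R(g)=g^{-1}m$ reverses length, the composite $R^2(g)=(g^{-1}m)^{-1}m=m^{-1}gm=\eta_m(g)$ preserves it, as $\lrr(R^2(g))=d-\lrr(R(g))=\lrr(g)$. For part (3), I would invoke that for a connected graded Frobenius algebra the graded Nakayama automorphism is unique, and the algebra is symmetric exactly when that automorphism is the identity (a graded inner automorphism is trivial, there being no nonscalar homogeneous units); by part (2) this says $\phi_m=\id$, i.e.\ $m$ is central. For part (4), note $a$ is the number of length-one elements, so $\mu=\phi_m$ restricts to a genuine permutation of this $a$-element set, of some order $n$ dividing $a!$; since $\mathcal{H}_G(\Re)$ is generated in degree $1$ and $\mu$ is an algebra map, $\mu^n=\id$, and $\mu^n=\phi_{m^n}$ then gives that $m^n$ is central.

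The only delicate point is the bookkeeping in part (2): one must check that the length side-condition $\lrr(g)+\lrr(h)=d$ in the definition of $\cdot$ is automatically consistent on both sides of $\langle a,b\rangle=\langle\mu(b),a\rangle$, which is exactly what the bijectivity and length-reversal of $R$ guarantee. The remaining steps are then formal; the main conceptual content, already present in Theorem~\ref{xxthm2.9}, is that the top-degree pairing is governed by the single length-reversing map $g\mapsto g^{-1}m$.
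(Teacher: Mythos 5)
Your proposal is correct and follows essentially the same route as the paper: both rest on the \textbf{claim} from Theorem \ref{xxthm2.9} that the top-degree pairing is the permutation matrix $\langle g,h\rangle=\delta_{h,\,g^{-1}m}$, from which $\mu(h)=mhm^{-1}$ is read off via the defining identity $\langle a,b\rangle=\langle\mu(b),a\rangle$, with parts (1), (3), (4), (5) then following exactly as in the paper (your observation that $\eta_m=R^2$ for the length-reversing map $R(g)=g^{-1}m$ is just a tidier phrasing of the paper's deduction of (5) from (1,2)).
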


\begin{proof}
(1) By the proof of Theorem \ref{xxthm2.9}, 
$\mu^{-1}$ maps $x_{ij}$ to $x_{ij'}$ where $j'$ is determined by
\begin{equation}
\label{E2.10.1}\tag{E2.10.1}
x_{ij} \cdot x_{d-i w}=x_{ij} x_{d-i w} =m =x_{d-i w} x_{i j'}=x_{d-i w} 
\cdot x_{i j'}.
\end{equation}
Hence the assertion follows. Since ${\mathcal H}_G(\Re)$ is generated in degree 1, 
this automorphism $\mu^{-1}$ is completely determined by \eqref{E2.9.1}.

(2) 
By part (1) and \eqref{E2.10.1}, for any $(i,j)$, 
$$\mu^{-1}(x_{ij})=x_{ij'}=
x_{d-iw}^{-1} m=m^{-1} (m x_{d-iw}^{-1})m=m^{-1} x_{ij} m=
\eta_{m}(x_{ij})$$
which implies the assertion.

(3) This follows from the fact $\mu^{-1}(=\eta_{m})$ is the identity
if and only if $m$ is central.

(4) Restricted to the degree 1 component of ${\mathcal H}_G(\Re)$,
$\eta_{m}$ is a permutation in $S_a$. So $\eta_{m}$ has order
$n$ where $n$ divides $a!$. This implies that $\eta_{m^n}$
is the identity, which is equivalent to the assertion that
$m^n$ is central.

(5) This is an immediate consequence of parts (1,2). 
\end{proof}

When $G$ is a Coxeter group and $(G,\Re)$ a Coxeter
system, the Hasse algebra ${\mathcal H}_G(\Re)$ agrees with the
{\it nilCoxeter algebra} of Fomin-Stanley \cite{FS}, which  was 
studied by several people;  see, for example, \cite{Al, Ba, KM}. 
The Frobenius property of the nilCoxeter algebras was proved by
\cite{Al} in which some statements in Theorems \ref{xxthm2.9} 
and \ref{xxthm2.10} are proven (for the special case of 
nilCoxeter algebras).
There are other Frobenius algebras associated 
to $(G,\Re)$, for example, the covariant algebra of $(G,\Re)$ and 
the Nichols algebra (or the Nichols-Woronowicz algebra as it is called 
in \cite{Ba, KM, MS}) ${\mathcal B}(V)$. 

\begin{theorem}
\label{xxthm2.11}
If the Hasse algebra ${\mathcal H}_G(\Re)$ is Frobenius, then so is 
any skew Hasse algebra associated to $(G,\Re)$.
\end{theorem}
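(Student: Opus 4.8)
The plan is to mimic the proof of Theorem \ref{xxthm2.9} directly for the skew Hasse algebra, rather than to construct an isomorphism with ${\mathcal H}_G(\Re)$; indeed such an isomorphism need not exist, since a skew Hasse algebra may carry scalars that change its Nakayama automorphism (compare the scalars $\beta(g)$ appearing in Theorem \ref{xxthm0.4}). Write ${\mathcal H}'$ for a skew Hasse algebra associated to $(G,\Re)$ with structure scalars $\{\alpha_{g,h}\}$, and denote its multiplication by $\cdot$. The key observation is that, since every $\alpha_{g,h}$ is nonzero, the vanishing pattern of the product on ${\mathcal H}'$ is identical to that of ${\mathcal H}_G(\Re)$: for basis elements $g,h\in G$ one has $g\cdot h=\alpha_{g,h}\,gh$ exactly when $\lrr(gh)=\lrr(g)+\lrr(h)$, and $g\cdot h=0$ otherwise. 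In particular ${\mathcal H}'$ is again connected graded and finite dimensional, with $\deg g=\lrr(g)$.

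Since ${\mathcal H}_G(\Re)$ is Frobenius, $\fp_\Re(t)$ is palindrome, so by the combinatorial claim established in the proof of Theorem \ref{xxthm2.9} there is a unique element $m\in G$ of maximal length $d$, and for every $g\in G$ one has $\lrr(g)+\lrr(g^{-1}m)=d$, while $g\cdot h$ has a nonzero $m$-component only when $h=g^{-1}m$. Following Theorem \ref{xxthm2.9}, I would then define the graded bilinear form of degree $-d$
$$\langle a,b\rangle:={\text{pr}}_{\Bbbk m}(a\cdot b),\qquad a,b\in{\mathcal H}',$$
where ${\text{pr}}_{\Bbbk m}$ is the projection onto the top-degree component $\Bbbk m$. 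Associativity of $\langle-,-\rangle$ is immediate, because ${\mathcal H}'$ is by definition an associative algebra: $\langle a\cdot b,c\rangle={\text{pr}}_{\Bbbk m}((a\cdot b)\cdot c)={\text{pr}}_{\Bbbk m}(a\cdot(b\cdot c))=\langle a,b\cdot c\rangle$.

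For nondegeneracy I would examine the Gram matrix of $\langle-,-\rangle$ in the basis $G$: its $(g,h)$-entry equals $\alpha_{g,h}$ when $gh=m$ and $\lrr(g)+\lrr(h)=d$, and is $0$ otherwise. By the claim, each row $g$ has exactly one nonzero entry, situated in column $g^{-1}m$ and equal to the nonzero scalar $\alpha_{g,g^{-1}m}$; since $g\mapsto g^{-1}m$ is a bijection of $G$, these nonzero entries occupy pairwise distinct columns, so the Gram matrix is a monomial matrix and hence invertible. Thus $\langle-,-\rangle$ is a nondegenerate, associative, graded bilinear form of degree $-d$, which is exactly what is needed for ${\mathcal H}'$ to be a graded Frobenius algebra; one could moreover read off its Nakayama automorphism from \eqref{E2.9.1} as in Theorem \ref{xxthm2.9}(2).

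The only genuine input is the claim from Theorem \ref{xxthm2.9}, which supplies both the uniqueness of the top element $m$ and the complementary-length bijection $g\mapsto g^{-1}m$; once this is in hand the argument is essentially formal, the point being that passing from ${\mathcal H}_G(\Re)$ to ${\mathcal H}'$ merely rescales the nonzero structure constants and therefore preserves the grading and the vanishing pattern that drove the earlier proof. The expected obstacle is thus minor bookkeeping rather than a new idea: one must confirm that the top-degree pairing stays monomial (hence invertible) after the nonzero rescaling, and check the boundary instances $g=e$ and $g=m$ of the claim, where $g^{-1}m$ equals $m$ and $e$ respectively.
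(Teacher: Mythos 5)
Your proposal is correct and follows essentially the same route as the paper: the paper likewise invokes the claim from the proof of Theorem \ref{xxthm2.9} (unique top element $m$ and the complementary-length pairing $g\mapsto g^{-1}m$), observes that the skew structure constants are nonzero so the vanishing pattern is unchanged, and defines the form $\langle a,b\rangle={\text{pr}}_{\Bbbk m}(ab)$ to conclude nondegeneracy. Your explicit monomial Gram matrix argument is just a more detailed write-up of the same nondegeneracy check.
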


\begin{proof} Since ${\mathcal H}_G(\Re)$ is Frobenius,
$\fp_\Re(t)$ is palindrome. Then all statements made in the 
proof of Theorem \ref{xxthm2.9} hold. 

Let $A$ be a skew Hasse algebra associated to $(G,\Re)$. Then 
$A$ is a connected graded algebra with a basis $\{g\mid g\in G\}$.
Following the proof of Theorem \ref{xxthm2.9}, for each $i$, let 
$\{x_{i1},\cdots x_{in_i}\}\subset G$ be a $\Bbbk$-linear basis 
of $A_i$, which is the complete list of elements in $G$ of length $i$. 
Then, by the definition of skew Hasse algebra, the following 
hold in the algebra $A$:
\begin{enumerate}
\item[(1)]
For each $x_{ij}$, there is a unique element of length 
$s:=d-i$, say $x_{sw}$, such that $x_{ij}\cdot x_{sw}=x_{ij} x_{sw}=
\alpha m$ where $x_{sw}=x_{ij}^{-1}m$ is in $G$ and $\alpha$ is a nonzero
scalar dependent on $x_{ij}$ and $x_{sw}$. 
\item[(2)]
If $t\neq w$, then $x_{ij}\cdot x_{st}=0$.
\item[(3)]
Every element 
of length $d-i$ is of the form $x_{ij}^{-1}m$ (for different $j$). 
\end{enumerate}
Let ${\text{pr}}_{\Bbbk m}$ be the projection from $A$ to 
the highest degree component and define $\langle a,b\rangle={\text{pr}}_{\Bbbk m}(ab)$.
It follows from (1,2,3) that $\langle a,b\rangle$ is a nondegenerate
associative bilinear form. Therefore $A$ is Frobenius.

By arguments similar to those in the proofs of Theorems \ref{xxthm2.9} and \ref{xxthm2.10}, we see that
the graded Nakayama automorphism of $A$ is of the form
\begin{equation}
\label{E2.11.1}\tag{E2.11.1}
\mu_A: g\to \beta(g) mg m^{-1}, \quad \forall \; g\in G
\end{equation}
where $\{\beta(g)\mid g\in G\}$ are nonzero scalars.
\end{proof}

\section{Dual reflection groups}
\label{xxsec3}

A commutative algebra is AS regular if and only if it is a polynomial ring, so we
can extend the notion of a reflection group to groups acting on more general AS regular algebras using the following definition.

\begin{definition}
\label{xxdef3.1}
A finite group $G$ is called a {\it  reflection group} 
(in the noncommutative setting) if $\Bbbk G$ acts homogeneously and 
inner faithfully on a noetherian AS regular domain $A$ that is 
generated in degree 1 such that the fixed subring $A^G$ is AS 
regular.
\end{definition}

When $A$ is a skew polynomial ring, a special AS regular algebra,
reflections (or quasi-reflections as they were called in  \cite[Definition 2.2]{KKZ1}) and 
reflection groups were studied in \cite{KKZ1, KKZ2}. One interesting class 
of reflection groups that arises in this context are the so-called mystic reflection groups \cite{KKZ2}.

More generally, we can extend the definition of reflection group to semisimple Hopf algebras in the following definition.

\begin{definition}
\label{xxdef3.2}
Let $H$ be a semisimple Hopf algebra. We say $H$ is a {\it 
reflection Hopf algebra} if $H$ acts homogeneously and 
inner faithfully on 
a noetherian AS regular domain $A$ that is 
generated in degree 1 such that the fixed
subring $A^H$ is again AS regular. In this case we say
that {\it $H$ acts on $A$ as a reflection Hopf algebra}. 
\end{definition}

In \cite[Examples 7.4 and 7.6]{KKZ3} we show that $H_8$, the 
Kac-Palyutkin Hopf algebra of dimension 8 (a semisimple Hopf 
algebra that is not a isomorphic to a group algebra), is a 
reflection Hopf algebra for the skew polynomial algebras 
$\Bbbk_{-i}[u,v]$ and $\Bbbk_{-1}[u,v]$. Further results on 
the structure and properties of reflection Hopf algebras
are the object of research in progress.

Comparing Definition \ref{xxdef3.1} with Definition \ref{xxdef3.2},
it is clear that a reflection group is a special case of a 
reflection Hopf algebra.  The main object 
of this paper is the {\it dual reflection group} in 
Definition \ref{xxdef0.1}, which is 
another spacial case of Definition \ref{xxdef3.2}. 
Let $G$ be a finite group and let $H$ be the dual Hopf algebra 
$\Bbbk^G:=(\Bbbk G)^\circ$. We say $G$ is a {\it dual reflection group} 
if $H$ acts homogeneously and inner faithfully on a noetherian 
AS regular domain $A$ that is generated in degree 1 such that 
the fixed subring $A^H$ is AS regular [Definition \ref{xxdef0.1}]. 
In this case we say $G$ coacts on $A$ as a {\it dual reflection group}. 
The group-theoretic or 
combinatorial connection between reflection groups and dual 
reflection groups is not yet evident;  Example \ref{xxex3.7} is one example of a dual reflection group, and \cite{KKZ5} will contain further examples.

We note that the study of $\Bbbk^G$-actions on noncommutative 
algebras is an important and interesting topic related to different 
areas of mathematics even if $G$ is not a dual reflection group and $A$ 
is not an AS regular algebra \cite{CM}. 

The algebraic and coalgebraic structure of $\Bbbk^G$ were reviewed
in introduction. 

When $A$ is $G$-graded, we can write 
$A$ as $\bigoplus_{g\in G} A_g$.
The following lemma  is well-known.

\begin{lemma}
\label{xxlem3.3} Let $A$ be a noetherian AS regular domain and 
$H$ be a semisimple Hopf algebra acting homogeneously and inner 
faithfully on $A$.
\begin{enumerate}
\item[(1)]\cite[Lemma 2.4]{KKZ3}
The fixed subring $A^H$ is noetherian and $A$ is finitely generated 
over $A^H$ on the left and on the right.
\item[(2)]
Suppose that the fixed subring $A^H$ is AS regular.
Then $\gldim A^H=\gldim A$ and $A$ is free over $A^H$ on both sides.
\end{enumerate}
\end{lemma}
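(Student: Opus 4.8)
The plan is to prove the two assertions in the stated order: first the equality of global dimensions, and then, using it, the freeness. Throughout I would use that $A^H$ is connected graded (since $H$ acts homogeneously and fixes $A_0=\Bbbk$, so $(A^H)_0=\Bbbk$) and that, by part (1), $A$ is module-finite over $A^H$ on both sides with $A^H$ noetherian. The one extra input I would extract from semisimplicity is the Reynolds operator: a normalized two-sided integral of $H$ yields an $A^H$-bimodule projection $\pi\colon A\to A^H$, so that $A\cong A^H\oplus C$ as graded left (and, separately, right) $A^H$-modules. Write $\fm_A:=A_{\geq 1}$ and $\fm_{A^H}:=(A^H)_{\geq 1}$, and set $d=\gldim A=\injdim A$ and $e=\gldim A^H=\injdim A^H$.

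For $\gldim A=\gldim A^H$ I would argue through local cohomology. Both algebras are AS Gorenstein, so the local cohomology $R^i\Gamma_{\fm}(-)$ of each algebra over itself vanishes except in the single degree equal to its injective dimension, where it is nonzero (cf. \eqref{E1.2.3}): thus $R^i\Gamma_{\fm_A}(A)\neq 0\iff i=d$ and $R^i\Gamma_{\fm_{A^H}}(A^H)\neq 0\iff i=e$. Because $A$ is module-finite over $A^H$, the $\fm_{A^H}$-torsion and the $\fm_A$-torsion functors agree on graded $A$-modules, giving $R^i\Gamma_{\fm_{A^H}}(A)\cong R^i\Gamma_{\fm_A}(A)$, which is therefore nonzero exactly when $i=d$. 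On the other hand, applying the additive functor $R^i\Gamma_{\fm_{A^H}}$ to the left $A^H$-module splitting $A\cong A^H\oplus C$ shows that $R^i\Gamma_{\fm_{A^H}}(A^H)$ is a direct summand of $R^i\Gamma_{\fm_{A^H}}(A)$. Taking $i=e$, where the summand is nonzero, forces $R^e\Gamma_{\fm_{A^H}}(A)\neq 0$, and hence $e=d$.

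Granting this, freeness follows from Auslander--Buchsbaum. The computation above identifies $\depth_{A^H}A$, the least degree in which $R^i\Gamma_{\fm_{A^H}}(A)$ is nonzero, as $d$; and $\depth_{A^H}A^H=e$ since $A^H$ is Cohen--Macaulay of dimension $e$. As $A^H$ is connected graded noetherian of finite global dimension and $A$ is a finitely generated graded $A^H$-module, the graded Auslander--Buchsbaum formula gives $\pdim_{A^H}A=\depth_{A^H}A^H-\depth_{A^H}A=e-d=0$. Thus $A$ is graded projective, hence free (being connected graded and finitely generated), as a left $A^H$-module; the symmetric argument with the right-hand splitting gives freeness on the right.

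The hard part will be the comparison $R^i\Gamma_{\fm_{A^H}}(A)\cong R^i\Gamma_{\fm_A}(A)$. One must verify that module-finiteness makes the two torsion functors coincide on graded $A$-modules and that their derived functors therefore agree --- the independence of local cohomology under a module-finite extension. This needs a little care noncommutatively, since $(A^H)_{\geq 1}A$ need not be a two-sided ideal of $A$; the point is that writing $A=\sum_i A^H b_i$ with finitely many homogeneous $b_i$ forces $A_{\geq n}x=0$ for large $n$ whenever $x$ (and each $b_i x$) is $\fm_{A^H}$-torsion, so the two torsion classes agree. A secondary point to pin down is a correctly stated graded Auslander--Buchsbaum formula valid over the (noncommutative) AS regular algebra $A^H$; the degenerate case $A^H=\Bbbk$, which forces $A=\Bbbk$ by module-finiteness, is immediate and can be handled separately.
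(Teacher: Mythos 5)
Your proposal is correct and follows essentially the same route as the paper's proof: the paper disposes of part (2) in one line by citing \cite[Lemma 1.10(d)]{KKZ1} and observing that the argument there (written for $H=\Bbbk G$) works verbatim for any semisimple Hopf algebra, and that argument is precisely the one you reconstruct --- the integral/Reynolds splitting $A\cong A^H\oplus C$, the invariance of local cohomology under the module-finite extension (the same \cite{AZ} fact the paper invokes again in the proof of Lemma \ref{xxlem4.1}(3)) to match up the depths and injective dimensions, and the graded Auslander--Buchsbaum formula to conclude $\pdim_{A^H}A=0$ and hence freeness. No gaps to report.
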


\begin{proof} (2) The statement of \cite[Lemma 1.10(d)]{KKZ1} is
for $H=\Bbbk G$, but the proof of \cite[Lemma 1.10(d)]{KKZ1} works for 
any semisimple Hopf algebra $H$.
\end{proof}

We need another general result.

\begin{lemma}
\label{xxlem3.4} 
Let $A$ be a noetherian AS Gorenstein algebra and $B$ be a 
subalgebra of $A$. Assume that
\begin{enumerate}
\item[(i)]
$B$ is AS Gorenstein.
\item[(ii)]
$A$ is finitely generated and free over $B$ on both sides.
\end{enumerate}
Then 
\begin{enumerate}
\item[(1)]
$\injdim A=\injdim B$.
\end{enumerate}
Suppose further that
\begin{enumerate}
\item[(iii)]
$AB_{\geq 1}=B_{\geq 1} A$.
\end{enumerate}
Let $C=A/I$ where $I$ is the 2-sided ideal $AB_{\geq 1}$. 
Then 
\begin{enumerate}
\item[(2)]
$C$ is Frobenius. 
\item[(3)]
$\bfl_C=\bfl_A-\bfl_B<0$.
\item[(4)]
$\mu(I)=I$ where $\mu$ is the Nakayama automorphism of
$A$.
\end{enumerate}
\end{lemma}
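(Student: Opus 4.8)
The plan is to dispatch (1) by local cohomology, and to extract (2)--(4) from a single computation of $\RHom_A(C,A)$ via change of rings, read off in two different ways. First I would prove (1). By hypothesis (ii) $A$ is module-finite over $B$, so the torsion functors agree, $\Gamma_{\fm_A}=\Gamma_{\fm_B}$ on graded $A$-modules, and hence $R^i\Gamma_{\fm_A}(A)\cong R^i\Gamma_{\fm_B}(A)$ for every $i$. Since $A$ is free over $B$, the right-hand side is $R^i\Gamma_{\fm_B}(B)\otimes_B A$, which vanishes outside the single degree $\injdim B$ because $B$ is AS Gorenstein. As the injective dimension of a noetherian AS Gorenstein algebra equals the top nonvanishing degree of the local cohomology of the algebra, this forces $\injdim A=\injdim B=:d$.

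For the remaining parts the key object is $\RHom_A(C,A)$. Because $A$ is flat over $B$, applying $A\otimes_B-$ to a $B$-free resolution of $\Bbbk$ produces an $A$-free resolution of $A\otimes_B\Bbbk$, which by (iii) is exactly $A/AB_{\geq 1}=C$; hence $\RHom_A(C,A)\cong\RHom_B(\Bbbk,A)\cong\RHom_B(\Bbbk,B)\otimes_B A$. Using that $B$ is AS Gorenstein of dimension $d$ and index $\bfl_B$, so $\RHom_B(\Bbbk,B)\cong\Bbbk(\bfl_B)[-d]$, I get $\Ext^i_A(C,A)=0$ for $i\neq d$ and $\Ext^d_A(C,A)\cong\Bbbk(\bfl_B)\otimes_B A\cong C(\bfl_B)$ as a right $A$-module, its right action factoring through $C$. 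The left $C$-action (coming from the right $A$-action on $C=A/I$, which exists only thanks to (iii)) commutes with right multiplication, so since the right module $C$ is cyclic with $\End_{A^{\mathrm{op}}}(C)=C$, this exhibits $\Ext^d_A(C,A)\cong{}^{\theta}C(\bfl_B)$ as a $(C,A)$-bimodule for some graded algebra automorphism $\theta$ of $C$.

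For (2) and (3) I would feed this into the Cartan--Eilenberg change-of-rings spectral sequence $\Ext^p_C(\Bbbk,\Ext^q_A(C,A))\Rightarrow\Ext^{p+q}_A(\Bbbk,A)$ for $A\twoheadrightarrow C$. Since $\Ext^q_A(C,A)$ lives only in $q=d$ it collapses, giving $\Ext^p_C(\Bbbk,{}^{\theta}C(\bfl_B))\cong\Ext^{p+d}_A(\Bbbk,A)$; the twist $\theta$ is invisible to $\Ext^p_C(\Bbbk,-)$ (one may absorb it into the first argument $\Bbbk$), so the left side is $\Ext^p_C(\Bbbk,C)(\bfl_B)$. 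As $A$ is AS Gorenstein of dimension $d$ and index $\bfl_A$, the right side is $\Bbbk(\bfl_A)$ for $p=0$ and $0$ otherwise. Hence $\Ext^{>0}_C(\Bbbk,C)=0$ and $\Ext^0_C(\Bbbk,C)\cong\Bbbk(\bfl_A-\bfl_B)$, so $C$ is AS Gorenstein of injective dimension $0$, i.e. Frobenius (this is (2)), with $\bfl_C=\bfl_A-\bfl_B$ (this is (3)); since this socle sits in the positive top degree of the finite-dimensional algebra $C$ (assuming $B\subsetneq A$), we get $\bfl_C<0$.

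For (4) I would compute the right $A$-module $\Ext^d_A(C,A)$ a second way, by local duality. For the noetherian AS Gorenstein algebra $A$ the balanced dualizing complex is ${}^{\mu}A^1(-\bfl_A)[d]$ by \eqref{E1.2.3}, so for the finite-dimensional module $C$ local duality gives $\Ext^d_A(C,A)\cong (C^{*})^{\mu^{\pm 1}}(\bfl_A)$, a twist by $\mu^{\pm 1}$ of the Matlis dual $C^{*}$. The right annihilator of $C^{*}$ is exactly $I$ (as $C=A/I$ is the regular module), so the right annihilator of this twist is $\mu^{\mp 1}(I)$; on the other hand the first computation exhibits $\Ext^d_A(C,A)\cong C(\bfl_B)$ with right annihilator exactly $I$. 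Comparing, $\mu^{\mp 1}(I)=I$, whence $\mu(I)=I$. I expect the main obstacle to be precisely this bookkeeping: upgrading the one-sided change-of-rings isomorphism to the correct $(C,A)$-bimodule statement (identifying the twist $\theta$), and pinning down the side and the automorphism twist that local duality attaches to $\Ext^d_A(C,A)$ after checking that $A$ satisfies the hypotheses (noetherian, the $\chi$-condition) under which \eqref{E1.2.3} and local duality apply. Getting these structures right is exactly what legitimizes both the collapse used for (2)--(3) and the annihilator comparison in (4).
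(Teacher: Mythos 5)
Your argument is essentially correct but follows a genuinely different route from the paper's. The paper works with the inclusion $B\subseteq A$: it invokes the Yekutieli--Zhang trace property of rigid dualizing complexes to obtain the $(A,B)$-bimodule isomorphism ${}^{\mu}A^{1}(-\bfl_A)\cong \Hom_B(A,{}^{\nu}B^{1}(-\bfl_B))$ (this single isomorphism also yields (1), since freeness forces the homology of $\RHom_B(A,{}^{\nu}B^{1}[d_B])$ to sit in one cohomological degree), and then obtains (2), (3) and (4) simultaneously by applying $-\otimes_B\Bbbk$, which produces $A/\mu^{-1}(I)(-\bfl)\cong C^*$ as left $A$-modules. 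You instead work with the surjection $A\twoheadrightarrow C$ and compute $\RHom_A(C,A)$ twice --- once by change of rings through $B$, once by local duality --- replacing the rigid-dualizing-complex machinery for finite extensions with a Cartan--Eilenberg spectral sequence and an annihilator comparison. Your inputs are arguably more elementary, but you pay with two computations instead of one, and you do not produce the bimodule isomorphism \eqref{E3.4.1} relating $\mu_A|_B$ and $\mu_B$, which the paper reuses in Section 4.

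The one step you should not wave at is the claim that the left $C$-action on $\Ext^d_A(C,A)\cong C(\bfl_B)$ is ${}^{\theta}C(\bfl_B)$ for an \emph{automorphism} $\theta$. What comes for free is a graded algebra homomorphism from $C$ to the graded endomorphism ring $\End_{A^{\mathrm{op}}}(C)\cong C$, and a graded algebra endomorphism of a finite-dimensional connected graded algebra need not be bijective (consider $x\mapsto 0$ on $\Bbbk[x]/(x^2)$). This point is load-bearing: from the collapsed spectral sequence alone you only learn that $\Ext^d_A(C,A)$ is an injective left $C$-module with one-dimensional socle, hence isomorphic to $C^*$ up to shift; to conclude $C\cong C^*(\mathrm{shift})$, i.e.\ that $C$ is Frobenius, you need the left structure to be the regular one up to an automorphism twist. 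The clean fix is the move you already plan for (4): run the bimodule form of local duality first, so that $\Ext^d_A(C,A)$ is identified with $C^*$ up to $\mu$-twists on both sides, and then compare with the change-of-rings answer $C(\bfl_B)$; this pins down both module structures at once and delivers (2), (3) and (4) together, which is in effect what the paper's single computation through $B$ accomplishes.
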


\begin{proof} 
By the faithful flatness, (ii) implies that $B$ is noetherian.

(1) Let $\mu$ and $\nu$ be the Nakayama automorphisms of $A$ and
$B$ respectively. Let $d_A$ and $d_B$ be the  injective dimensions 
of $A$ and $B$, respectively (and $\bfl_A$ and $\bfl_B$ be the
AS indices) as given in Definition \ref{xxdef1.1}.
By \cite[Lemma 3.5]{RRZ2},
${^\mu A^1}[d_A](-\bfl_A)$ and ${^\nu B^1}[d_B](-\bfl_B)$ are 
rigid dualizing complexes over $A$ and $B$, respectively. 
By \cite[Theorem 3.2(i) and Proposition 3.9(i)]{YZ} 
$${^\mu A^1}[d_A](-\bfl_A)\cong 
{\rm{RHom}}_B(A, {^\nu B^1}[d_B](-\bfl_B))$$
as $(A,B)$-bimodule complexes. Since $A$ is free as a left $B$-module
(and as a right $B$-module),
the homology of ${\rm{RHom}}_B(A, {^\nu B^1}[d_B](-\bfl_B))$ is 
concentrated in complex degree $-d_B$. As a consequence,
$d_A=d_B$ (that is part (1)) and
\begin{equation}
\label{E3.4.1}\tag{E3.4.1}
{^\mu A^1}(-\bfl)\cong \Hom_B(A, {^\nu B^1})
\end{equation}
where $\bfl:=\bfl_A-\bfl_B$.

Let $C=A/B_{\geq 1}A$, which is a graded factor ring of $A$
if $B_{\geq 1}A=AB_{\geq 1}$. Let $D=A/AB_{\geq 1}$.
Then $A\otimes_B \Bbbk\cong D$ as 
left $A$-module and $\Bbbk\otimes_B A\cong C$ as a right $A$-module.

Since $A$ is a free left $B$-module, $\Hom_B(A, {^\nu B^1})$
is a free right $B$-module. Then we have 
quasi-isomorphisms of complexes of left $A$-modules,
$$\begin{aligned}
\Hom_B(A, {^\nu B^1})\otimes_B \Bbbk&\cong 
{\rm{RHom}}_B(A, {^\nu B^1})\otimes^L_B \Bbbk\\
&\cong {\rm{RHom}}_B(A, {^\nu B^1}\otimes^L_B \Bbbk)\\
&\cong {\rm{RHom}}_B(A, \Bbbk)\cong \Hom_B(A,\Bbbk)\\
&\cong \Hom_B(A, \Hom_\Bbbk({_\Bbbk \Bbbk_B},\Bbbk))\\
&\cong \Hom_{\Bbbk}(\Bbbk\otimes_B A, \Bbbk)\cong \Hom_\Bbbk(C,\Bbbk)\\
& = C^*,
\end{aligned}
$$
where the second $\cong$ follows from the fact that $_BA$ is 
finitely generated free over $B$. 

(2,3,4)
Now we assume that $AB_{\geq 1}=B_{\geq 1}A=:I$ is a 2-sided 
ideal of $A$. In this case $C=D$.
Since $I$ is a 2-sided ideal of $A$, so is 
$\mu^{-1}(I)$. Then 
$${^\mu A^1}\otimes_B \Bbbk \cong {^\mu (A/I)^1}\cong A/(\mu^{-1}(I))$$
as left $A$-modules. By applying $-\otimes_B \Bbbk$ to \eqref{E3.4.1}, the 
above computation shows that
$$A/(\mu^{-1}(I))(-\bfl)
\cong {^\mu A^1}(-\bfl)\otimes_B \Bbbk
\cong \Hom_B(A, {^\nu B^1})\otimes_B \Bbbk
\cong C^*$$
as left $A$-modules. As a consequence, $\mu^{-1}(I)=I$ (which is 
part (4)) and 
\begin{equation}
\label{E3.4.2}\tag{E3.4.2}
C(-\bfl)\cong C^*
\end{equation}
as left $C$-modules. Now \eqref{E3.4.2} implies that
$C$ is Frobenius and $\bfl_C=\bfl=\bfl_A-\bfl_B$. Therefore
parts (2,3) follow.
\end{proof}

When $H=\Bbbk^G$ acts on $A$, then the
fixed subring $A^H$ is the group graded $e$-component of $A$, denoted by
$A_e$.

\begin{theorem}
\label{xxthm3.5} Let $A$ be a noetherian AS regular domain.
In parts {\rm{(3-5)}} we further assume that $A$ is generated in 
degree 1. Suppose $G$ coacts on $A$ as a dual reflection group.
Let $H=\Bbbk^G$. 
\begin{enumerate}
\item[(1)]
There is a set of homogeneous elements $\{ f_{g}\mid g\in G\}\subseteq A$ 
with $f_e=1$ such that $A_g=f_g \cdot A_e=A_e \cdot f_g$ for all $g\in G$. 
As a consequence, the nonzero component of $A_g$ with lowest degree 
has dimension 1.
\item[(2)]
$I:=A (A_e)_{\geq 1}$ is a two-sided ideal and $A/I\cong \bigoplus_{g\in G} 
\Bbbk \overline{f_g}$. As a consequence, the covariant ring $A^{cov\; H}$ is
Frobenius.
\item[(3)]
Suppose that, as an $H$-module, $A_1\cong
\oplus_{g\in G} (\Bbbk p_g)^{n_g}$ where $n_g\geq 0$. If $n_g>0$
and $g\neq e$, then $n_g=1$.
\item[(4)] 
The set $\Re:=\{g\in G\mid n_g>0, g\neq e\}$ generates $G$. 
\item[(5)]
$A^{cov\; H}$ is a skew Hasse algebra associated to the generating set $\Re$.
As a consequence, $A^{cov\; H}$ is a Frobenius algebra 
generated in degree 1. Furthermore, the Hilbert series of
$A^{cov\; H}$ is palindrome and is a product of cyclotomic
polynomials.
\end{enumerate}
\end{theorem}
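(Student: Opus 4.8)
The plan is to translate the $\Bbbk^G$-action into the $G$-grading $A=\bigoplus_{g\in G}A_g$, so that $A^H=A_e$, which is AS regular by hypothesis. Lemma \ref{xxlem3.3}(2) then supplies the two facts I lean on throughout: $A$ is finitely generated and free as a left and as a right $A_e$-module, and $\gldim A_e=\gldim A$. I first record that every component is nonzero: since $A$ is a domain, $\{g:A_g\neq 0\}$ is a submonoid of the finite group $G$, hence a subgroup, and inner faithfulness of the coaction forces this subgroup to be all of $G$. The real content is part (1); once each $A_g$ is free of rank one on both sides, parts (2)--(5) follow by bookkeeping together with Lemmas \ref{xxlem3.3} and \ref{xxlem3.4}.

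For part (1) I would show each $A_g$ has $A_e$-rank exactly one. Each $A_g$ is a graded direct summand of the free module $A$, hence graded free over $A_e$ on each side; writing $H_{A_g}(t)=H_{A_e}(t)\,p_g(t)$ shows the left and right ranks coincide, call the common value $r_g\ge 1$. To see $r_g=1$, pass to the graded ring of fractions $Q$ obtained by inverting all nonzero homogeneous elements (an Ore set, as $A$ is a noetherian domain). Then $Q=\bigoplus_g Q_g$ is a $G$-graded division ring: $Q_e$ is a division ring, and for $g$ in the support any nonzero $x\in Q_g$ is invertible, so $Q_g=Q_e x$ is one-dimensional over $Q_e$ on both sides. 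Since the support is all of $G$ and localization preserves the rank of a free module, $r_g=\dim_{Q_e}Q_g=1$. Choosing $f_g\in A_g$ homogeneous of minimal degree $\delta_g$ then gives $A_g=A_e f_g=f_g A_e$ (both cyclic generators lie in the one-dimensional bottom component $(A_g)_{\delta_g}$, so agree up to scalar), and in particular the lowest nonzero component of $A_g$ is one-dimensional. This localization/rank step is the main obstacle; everything else is formal.

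For part (2), the relation $f_gA_e=A_ef_g$ produces a graded automorphism $\sigma_g$ of $A_e$ with $f_g b=\sigma_g(b)f_g$, and $\sigma_g$ preserves the augmentation ideal; hence $A_g(A_e)_{\ge1}=(A_e)_{\ge1}A_g$ for every $g$, so $I:=A(A_e)_{\ge1}=(A_e)_{\ge1}A$ is two-sided and $A/I=\bigoplus_g A_ef_g/(A_e)_{\ge1}f_g=\bigoplus_g\Bbbk\overline{f_g}$. Frobeniusness is then exactly Lemma \ref{xxlem3.4}(2), whose hypotheses (i)--(iii) are now in hand ($A_e$ is AS Gorenstein, $A$ is free over $A_e$ on both sides, and $I$ is two-sided). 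Parts (3) and (4) are immediate from part (1) under the degree-one generation hypothesis: $n_g=\dim(A_g)_1$, and since $A_g=A_ef_g$ is generated in degree $\delta_g$ its degree-one piece is $0$ when $\delta_g\ge2$ and $\Bbbk f_g$ when $\delta_g=1$, so $n_g\in\{0,1\}$ for $g\neq e$; thus $A_1=(A_1\cap A_e)\oplus\bigoplus_{g\in\Re}\Bbbk f_g$. Because $A$ is generated by $A_1$ and the $G$-grading is multiplicative, the support---hence $G$---is generated by $\{e\}\cup\Re$, i.e.\ $\Re$ generates $G$.

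For part (5) I would first prove $\delta_g=\lrr(g)$: a reduced word $g=v_1\cdots v_{\lrr(g)}$ with $v_i\in\Re$ gives a nonzero product $f_{v_1}\cdots f_{v_{\lrr(g)}}\in A_g$ of degree $\lrr(g)$, so $\delta_g\le\lrr(g)$, while writing a minimal-degree element of $A_g$ as a sum of products of $\delta_g$ degree-one generators expresses $g$ as a product of at most $\delta_g$ elements of $\Re$, giving $\lrr(g)\le\delta_g$. Hence $\overline{f_g}$ has degree $\lrr(g)$ in $A^{cov \; H}$. Using part (1), $f_gf_h=b\,f_{gh}$ with $b\in A_e$ homogeneous of degree $\lrr(g)+\lrr(h)-\lrr(gh)\ge0$ and $b\neq0$ (domain); thus $\overline{f_g}\cdot\overline{f_h}=b\,\overline{f_{gh}}$ is a nonzero scalar multiple of $\overline{f_{gh}}$ exactly when $\lrr(gh)=\lrr(g)+\lrr(h)$ (so $\deg b=0$) and is $0$ otherwise (so $b\in(A_e)_{\ge1}$, landing in $I$). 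This is precisely the multiplication of a skew Hasse algebra on $\Re$, whence $A^{cov \; H}$ is generated in degree one (each $\overline{f_g}$ is a nonzero scalar times a product of the $\overline{f_{v_i}}$ along a reduced word), is Frobenius by part (2), and has Hilbert series $\fp_\Re(t)=\sum_g t^{\lrr(g)}=H_A(t)H_{A_e}(t)^{-1}$. Finally this Hilbert series is palindrome because $A^{cov \; H}$ is connected graded Frobenius, and it is a product of cyclotomic polynomials because the inverse Hilbert series of a noetherian AS regular algebra of finite GK dimension is a product of cyclotomic polynomials (from the finite minimal resolution of $\Bbbk$ it is an integer polynomial whose zeros are poles of the Hilbert series, hence roots of unity, so Kronecker's theorem applies); unique factorization in $\mathbb Z[t]$ then makes the quotient $H_A(t)H_{A_e}(t)^{-1}$ a product of cyclotomics.
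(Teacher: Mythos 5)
Your argument is correct and follows the paper's proof in structure almost exactly: the same reduction to the $G$-grading with $A_e=A^H$, the same use of Lemma \ref{xxlem3.3}(2) for freeness, Lemma \ref{xxlem3.4}(2) for the Frobenius property in part (2), and the skew Hasse algebra identification plus Theorem \ref{xxthm2.11} and the cyclotomic factorization of $H_A(t)^{-1}$, $H_{A_e}(t)^{-1}$ for part (5). The one place you genuinely diverge is the rank-one step in part (1), which you correctly identify as the crux. You pass to a $G$-graded ring of fractions and read off the rank there; the paper instead picks a nonzero $a\in A_{g^{-1}}$ and observes that left multiplication by $a$ embeds $A_g$ into $A_e$ as a right $A_e$-module, so the graded free module $A_g$ has rank at most one because the noetherian domain $A_e$ has uniform dimension one. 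The paper's route is more elementary and sidesteps the point you gloss over: that the nonzero ($\mathbb{Z}\times G$)-homogeneous elements form an Ore set whose localization is again $G$-graded is plausible but not immediate for a nonabelian grading group, and you assert it without proof. Your version can be repaired (e.g., by localizing only $A_e$ and using finiteness of $A$ over $A_e$, or by invoking graded Goldie theory), but the embedding trick gets the same conclusion in two lines. The extra detail you supply in part (5) --- the identity $\delta_g=\lrr(g)$ and the computation $f_gf_h=b\,f_{gh}$ with $\deg b=\lrr(g)+\lrr(h)-\lrr(gh)$ --- is exactly what the paper leaves as ``easy to check'' (and partly defers to the proof of Theorem \ref{xxthm0.3}(3)), so it is a welcome expansion rather than a deviation.
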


By part (1), $A=\bigoplus_{g\in G} A_e f_g$, but $f_g$ is not invertible
except for $g=e$. In some sense $A$ is a twisted semi-group 
ring without the basis elements $f_g$ being invertible.
Both part (2) and (5) of the above theorem prove
that $A^{cov\; H}$ is a Frobenius algebra 
generated in degree 1. 

\begin{proof}[Proof of Theorem \ref{xxthm3.5}]
(1) Note that $A=\bigoplus_{g\in G} A_g$ and each $A_g$ is an 
$A_e$-bimodule. Since $A$ is a domain and the $\Bbbk^G$-action on $A$ is inner 
faithful, we obtain that $A_g\neq 0$ for all $g\in G$. By Theorem 
\ref{xxlem3.3}(2), $A$ is free over $A_e$.  Hence each $A_g$ is free 
over $A_e$. Let $a$ be a nonzero element in $A_{g^{-1}}$, then 
$a A_{g}\subseteq A_e$. Hence $A_{g}$ has rank one as a right 
$A_e$-module, which implies that $A_g=f_g \cdot A_e$ for some 
homogeneous element $f_g$. Therefore the lowest degree of the nonzero 
component of $A_g$ is $\deg f_g$. By symmetry, $A_g=A_e \cdot f_g$.
The consequence is clear.

(2) By part (1), $f_g A_e=A_e f_g$. Hence $f_g (A_e)_{\geq 1}=
(A_e)_{\geq 1} f_g$. Therefore
$$I=A (A_e)_{\geq 1}=\bigoplus_{g\in G} f_g (A_e)_{\geq 1}=
\bigoplus_{g\in G}(A_e)_{\geq 1} f_g =(A_e)_{\geq 1} A$$
which shows that $I$ is a 2-sided ideal. It is clear that 
$A/I=\bigoplus_{g\in G} \Bbbk \overline{f_g}$. The consequence 
follows from Lemma \ref{xxlem3.4}(2) since $A^{cov \; H}=A/I$.

(3) If $g\neq e$ and $n_g>0$, then $A_g$ has lowest degree 1. By 
part (1), $n_g=1$. 

(4) Since $A/I$ is generated in degree 1, $G$ is generated by 
$\{g\in G\mid n_g>0, g\neq e\}$.

(5) The first assertion is easy to check and the second
follows from Theorem \ref{xxthm2.11}.

Since $A$ is free over $A_e$, we have 
$A\cong A_e\otimes A^{cov\; H}$ as vector
spaces. Hence $H_A(t)=H_{A^e}(t) H_{A^{cov\; H}}(t)$.
Since both $H(A)^{-1}$ and $H_{A^e}(t)^{-1}$ are products of
cyclotomic polynomials, so is $H_{A^{cov\; H}}(t)$. 
It is well-known that 
every product of cyclotomic polynomials is palindrome.
\end{proof}

Let $R$ be any algebra and $t$ be a normal nonzerodivisor 
element in $R$. The conjugation $\eta_t$ of $R$ is defined to 
be $\eta_t(r)= t^{-1} r t$ for all $r\in R$. The inverse
conjugation $\eta^{-1}_t$ is denoted by $\phi_t$.

By Theorem \ref{xxthm3.5}(1),
$$A=\bigoplus_{g\in G} f_g B=\bigoplus_{g\in G} B f_g$$
where $B=A_e$ is AS regular. Let $\eta_g$ and $\phi_g$ be the
graded algebra automorphisms of $B$ defined by
\begin{equation}
\label{E3.5.1}\tag{E3.5.1}
\eta_g: x\to f_{g}^{-1} x f_g, \quad {\text{and}} \quad 
\phi_{g}:=\eta_{f_g^{-1}}: x\to f_g x f_g^{-1}
\end{equation}
for all $x\in B$. Then we have 
$$f_g x= \phi_g(x) f_g$$ 
for all $x\in B$. Define $c_{g,h}\in B$ such that
\begin{equation}
\label{E3.5.2}\tag{E3.5.2}
 f_g f_h=c_{g,h} f_{gh}
\end{equation}
for all $g,h\in G$. Then we have
$$\phi_{g}\phi_{h}=
\phi_{f_g}\phi_{f_h}=\phi_{f_g f_h}=\phi_{c_{g,h}}\phi_{gh}$$
where $\phi_{c_{g,h}}$ is defined by sending 
$x\to c_{g,h} x c_{g,h}^{-1}$. If $\lrr(gh)=\lrr(g)+\lrr(h)$,
then $c_{g,h}$ is a scalar and $\phi_{c_{g,h}}$ is 
the identity, whence $\phi_{g}\phi_{h}=\phi_{gh}$.

Then associativity shows the following.

\begin{lemma}
\label{xxlem3.6}
Retain the above notation. 
\begin{enumerate}
\item[(1)]
$c_{g,h}$ is a normal element in $B$.
\item[(2)]
The following holds
$$c_{g,h}\phi_{gh}(b) c_{gh,k}=\phi_{g}\circ \phi_{h}(b) \phi_{g}(c_{h,k})
c_{g,hk}=\phi_{c_{g,h}}\circ \phi_{gh}(b) \phi_{g}(c_{h,k})
c_{g,hk}$$
for all $g,h,k\in G$ and $b\in B$.
\end{enumerate}
\end{lemma}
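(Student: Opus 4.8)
The plan is to derive the entire lemma from the two relations recorded just before the statement: the defining property $f_g x=\phi_g(x)f_g$ of $\phi_g$ (valid for all $x\in B$) and equation \eqref{E3.5.2}, $f_gf_h=c_{g,h}f_{gh}$. The only structural input needed beyond these is that $A$ is a domain with the rank-one free decomposition $A=\bigoplus_{w\in G}Bf_w$ of Theorem \ref{xxthm3.5}(1); this guarantees that each $f_w$ is a nonzerodivisor and that the ``$f_w$-coefficient'' of an element of $Bf_w$ is well defined, which is what legitimizes comparing coefficients.

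For part (1) I would first establish the commutation identity
$$c_{g,h}\,\phi_{gh}(x)=\phi_g\phi_h(x)\,c_{g,h},\qquad\text{for all }x\in B. \qquad (\ast)$$
This comes from evaluating $f_gf_h\,x$ in two ways. On one hand $f_gf_h\,x=f_g\,\phi_h(x)\,f_h=\phi_g\phi_h(x)\,f_gf_h=\phi_g\phi_h(x)\,c_{g,h}f_{gh}$; on the other hand $f_gf_h\,x=c_{g,h}f_{gh}\,x=c_{g,h}\,\phi_{gh}(x)\,f_{gh}$. Cancelling the nonzerodivisor $f_{gh}$ on the right yields $(\ast)$. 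Since $\phi_{gh}$ is an automorphism of $B$, substituting $x=\phi_{gh}^{-1}(y)$ rewrites $(\ast)$ as $c_{g,h}\,y=\theta(y)\,c_{g,h}$ for all $y\in B$, where $\theta:=\phi_g\phi_h\phi_{gh}^{-1}\in\Aut(B)$. Surjectivity of $\theta$ then gives $c_{g,h}B=\theta(B)c_{g,h}=Bc_{g,h}$, so $c_{g,h}$ is normal. As a byproduct $\theta=\phi_{c_{g,h}}$, which simultaneously justifies the relation $\phi_g\phi_h=\phi_{c_{g,h}}\phi_{gh}$ asserted in the text preceding the lemma.

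For part (2) I would compute the single product $f_gf_h\,b\,f_k$ two different ways and equate $f_{ghk}$-coefficients. Grouping $f_gf_h=c_{g,h}f_{gh}$ first and then pushing $b$ past $f_{gh}$ gives $f_gf_h\,b\,f_k=c_{g,h}\,\phi_{gh}(b)\,f_{gh}f_k=c_{g,h}\,\phi_{gh}(b)\,c_{gh,k}\,f_{ghk}$. Alternatively, moving $b$ leftward through $f_h$ and $f_g$ and then grouping $f_hf_k=c_{h,k}f_{hk}$ gives $f_gf_h\,b\,f_k=\phi_g\phi_h(b)\,f_gf_hf_k=\phi_g\phi_h(b)\,\phi_g(c_{h,k})\,c_{g,hk}\,f_{ghk}$. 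Equating the two $f_{ghk}$-coefficients is exactly the first equality of part (2), while the second equality is just the substitution $\phi_g\phi_h(b)=\phi_{c_{g,h}}\phi_{gh}(b)$ supplied by part (1).

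The computations are short, so I do not expect a serious obstacle; the one point demanding care is the legitimacy of comparing coefficients, which rests squarely on $A$ being a domain (making right multiplication by $f_{ghk}$ injective) together with the freeness $A=\bigoplus_w Bf_w$. A secondary subtlety is interpreting the symbols $\phi_{c_{g,h}}$ and $c_{g,h}^{-1}$ that already appear in the statement: since $c_{g,h}$ need not be invertible in the connected graded domain $B$, I would read $\phi_{c_{g,h}}$ as the automorphism $\theta$ characterized by $c_{g,h}\,y=\theta(y)\,c_{g,h}$, which is precisely what the normality from part (1) produces. Beyond this bookkeeping I anticipate no difficulty.
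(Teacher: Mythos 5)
Your proposal is correct and is precisely the argument the paper intends: the paper offers no written proof beyond the remark ``Then associativity shows the following,'' and your two evaluations of $f_gf_h\,b\,f_k$ (together with cancelling the nonzero $f_{ghk}$ in the domain $A$) are exactly that associativity computation, while your derivation of $(\ast)$ and of $\phi_g\phi_h=\phi_{c_{g,h}}\phi_{gh}$ matches the discussion preceding the lemma. Your care about reading $\phi_{c_{g,h}}$ via the normality relation $c_{g,h}\,y=\phi_{c_{g,h}}(y)\,c_{g,h}$ rather than literal conjugation is the right way to interpret the paper's notation.
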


\begin{example}
\label{xxex3.7} The dihedral group $D_{8}$ of order 8 is 
a dual reflection group. Note that $G:=D_{8}$ is generated by
$r$ of order 2 and $\rho$ of order 4 subject to the 
$r\rho=\rho^3 r$. Let $A$ be generated by 
$x,y,z$ subject to the relations
$$\begin{aligned}
zx& = q xz,\\
yx& = a zy,\\
yz& = xy.
\end{aligned}
$$
for any $a\in \Bbbk^{\times}$ and $q^2=1$. It is easy to see
that $A=\Bbbk_{q}[x,z][y; \sigma]$ where $\sigma$ sends $z$ to $x$
and $x$ to $az$. Hence $A$ is an AS regular algebra of global dimension 
3 that is not PI (for generic $a$). Define the
$G$-degree of the generators of $A$ as
$$\deg_G(x)=r, \; \deg_G(y)= r\rho, \; \deg_G(z)=
r\rho^2.$$
It is easy to check that the defining relations of
$A$ are $G$-homogeneous. Therefore $G$ coacts on 
$A$ homogeneously and inner faithfully. 
Since $r^2=e$, $x^2, y^2, z^2$ are in 
the fixed subring $A_e$. By using the 
PBW basis $\{x^i z^j y^k\mid i,j,k\geq 0\}$
of $A$, one can easily check that 
$x^i z^j y^k\in A_e$ if and only if all $i,j,k$
are even. Another straightforward computation
shows that $A_e=\Bbbk[x^2,z^2][y^2, \tau]$
where $\tau=\sigma^2\mid_{\Bbbk[x^2,z^2]}$. Therefore
$A_e$ is AS regular and $G$ is a dual reflection group.  Although other examples of dual reflection groups will appear in \cite{KKZ5} we do not know another $n$ with $D_{2n}$ a dual reflection group.
\end{example}

\begin{example}
\label{xxex3.8}
Let $G$ be the quaternion group of order 8. We claim that $G$ is not
a dual reflection group. 

Suppose to the contrary that $G$ coacts on a noetherian AS regular 
algebra $A$ generated in degree 1 as a dual reflection group. 
Then, by Theorem \ref{xxthm3.5}(5), $A^{cov\; H}$ is a Frobenius 
skew Hasse algebra, and its Hilbert series is a product of 
cyclotomic polynomials and palindrome.

Any generating set $\Re$ of $G$ must contain two elements of order 4, 
so the degree of $p_{\Re}(t)$ is less than or equal to 3.  Since 
$p_{\Re}(1) = 8$ the only possibility is 
$$p_{\Re}(t) = (1+t)^3 = 1 + 3 t +3 t^2 + t^3.$$  
Hence there must be 3 generators, say $x_1, x_2,x_3$, of 
$G$-degree not equal to $e$.  
Without loss of generality (and up to a conjugation and a 
permutation), the only possibility is $x_1 = i, x_2 = j$ 
and $x_3 = -j$. The Hasse algebra with respect to this $\Re$ is given 
in Example \ref{xxex2.5}. Let $B=A^{\Bbbk^G}$. Then as a $B$-module we have
$$A = B \oplus x_1 B \oplus x_2 B \oplus x_3 B \oplus x_1x_2 B 
\oplus x_1x_3 B \oplus x_1^2 B \oplus x_1^3 B$$
$$\deg_G(f_g):
\hspace*{.3in} e \hspace{.3in} i \hspace*{.32in} j\hspace*{.3in} 
-j \hspace*{.35in} k\hspace*{.3in} -k 
\hspace*{.3in} -1 \hspace*{.25in} -i\qquad\qquad\quad $$
where the second line is the $G$-degree of the generator of each
component. Since $j i=i (-j)$, we obtain a relation
$$x_2 x_1= a x_1 x_3$$
for some nonzero scalar $a\in \Bbbk$. Similarly,
the following relations are forced:
$$\begin{aligned}
x_3x_1 &= b x_1 x_2,\\
x_2^2 &= c x_1^2,\\
x_3^2 &= d x_1^2,
\end{aligned}
$$
for scalars $b,c,d\in \Bbbk^{\times}$.

We can rescale $x_2$ to take $c=1$ (giving possibly new $a, b,d$) 
and rescale $x_3$ to make $d=1$ (possibly changing $a,b$).  Since 
$x_1^2$ commutes with $x_2$ and $x_3$, $a^2=b^2=1$ and 
$x_2x_3$ and $x_3x_2$ commute with each other.  Also $x_3^2 = x_3x_1^2 
= bx_1x_2x_1 = ab x_1^2x_3 = ab x_3^3$ so $ab=1$. Combining with
$a^2=b^2=1$, we obtain that either $a=b=1$ or $a=b=-1$.  
There could be other generators $y_j$s of $G$-degree $e$
and the other relations involving $x_i$s and $y_j$s, but we 
will get a contradiction using the subalgebra $S$ of $A$ 
generated by the $x_1, x_2,x_3$.   

We first claim that the Koszul dual of $S$
is infinite dimensional. Using the comments about the 
scalars above, the relations in $S$ are:
$$\begin{aligned}
x_2 x_1 \pm x_1 x_3&=0,\\
x_3x_1 \pm   x_1 x_2&=0,\\
x_2^2 -x_1^2 &=0,\\
x_3^2 - x_1^2&=0.
\end{aligned}
$$
Then Koszul dual (also called the quadratic dual) $S^!$ of $S$ is 
generated by $\widehat{x_1}, \widehat{x_2}, \widehat{x_3}$ 
with relations: 
$$ \begin{aligned}
\widehat{x_2}  \widehat{x_1} \mp   \widehat{x_1}  \widehat{x_3}&= 0,\\ 
\widehat{x_3} \widehat{x_1} \mp \widehat{x_1} \widehat{ x_2} &= 0,\\
\widehat{x_1}^2 +  \widehat{x_2}^2+ \widehat{x_3}^2 &=0,\\
\widehat{x_2} \widehat{x_3} &= 0,\\
\widehat{x_3} \widehat{x_2} &= 0.
\end{aligned}
$$
The quadratic algebra generated by 
$\widehat{x_1}, \widehat{x_2}, \widehat{x_3}$ subject to 
the first three relation is an AS regular algebra of global
dimension 3, denoted by $D$. It is easy to see that 
$x_2x_3+x_3x_2, x_2x_3$ is a sequence of normal elements
in $D$. Thus 
$$\GKdim S^!=\GKdim D/(x_2x_3+x_3x_2, x_2x_3)=1,$$ 
which implies that $S^!$ is infinite dimensional.

Next we consider the natural graded algebra map $f: S\to A$. 
Note that this map is injective when restricted to degree 1. 
By taking the Koszul dual, we have a graded algebra map
$$f^!: A^!\to S^!.$$
Since $f$ is injective in degree 1, $f^!$ is surjective
in degree 1. Since $S^!$ is generated in degree 1, $f^!$
is surjective. We have shown that $S^!$ is infinite
dimensional, so is $A^!$. By \cite[Proposition 1.3.1, p. 7]{PP}
$A^!$ is a subalgebra of $\Ext^*_A(\Bbbk,\Bbbk)$. We obtain 
that $\Ext^*_A(\Bbbk,\Bbbk)$
is infinite dimensional, a contradiction to the fact that $A$ is 
AS regular.
\end{example}

\section{Nakayama automorphisms}
\label{xxsec4}

For any algebra $A$, the Nakayama automorphism of $A$ (if it exists) is denoted
by $\mu_A$. 
In this section we study the interplay between the Nakayama automorphisms
of the algebras ${\mathcal H}_G(\Re)$, $A$, $A\# \Bbbk^G$, $A_e$ and $A^{cov\; \Bbbk^G}$.
We need to use some facts about the local cohomology that were 
reviewed in Section \ref{xxsec1}.

\subsection{Nakayama automorphism of ${\mathcal H}_G(\Re)$ and skew Hasse algebras}
\label{xxsec4.0}
By Theorem \ref{xxthm2.10}(2), we have
\begin{equation}
\label{E4.0.1}\tag{E4.0.1}
\mu_{{\mathcal H}_G(\Re)}: \; 
g\to mgm^{-1}, \quad \forall \; g\in G.
\end{equation}
By \eqref{E2.11.1}, the Nakayama automorphism of a
skew Hasse algebra $B$ is of the form
\begin{equation}
\label{E4.0.2}\tag{E4.0.2}
\mu_{B}: \; 
g\to \beta(g) mgm^{-1}, \quad \forall \; g\in G,
\end{equation}
where $\{\beta(g) \mid g\in G\}$ are nonzero scalars in $\Bbbk$.

\subsection{Nakayama automorphism of $A_e$}
\label{xxsec4.1}

Let $\phi_g$ be defined as in \eqref{E3.5.1}.

\begin{lemma}
\label{xxlem4.1} Let $A$ be a noetherian AS Gorenstein 
algebra of injective dimension $d$, 
and let $\sigma$ be a graded algebra automorphism
of $A$. Let $M$ be an $A$-bimodule. 
\begin{enumerate}
\item[(1)]
$R^i\Gamma_{\fm}(M^{\sigma})=R^i\Gamma_{\fm}(M)^{\sigma}$ for all $i$.
\item[(2)]
$\Hom_{\Bbbk}(M^{\sigma}, \Bbbk)={^\sigma \Hom_{\Bbbk}(M, \Bbbk)}$.
\item[(3)]
Suppose $B$ is an AS Gorenstein subalgebra of $A$ such that
there is a $G$-graded decomposition
$$A=\bigoplus_{g\in G} f_g \cdot B=\bigoplus_{g\in G} B \cdot f_g,$$
for some elements $\{f_g\mid g\in G\}$,
where each $f_g\cdot B(=B\cdot f_g)$ is isomorphic to ${^1 B^{\phi_g}}$. Then 
there is a ${\mathbb Z}\times G$-graded isomorphism of $B$-bimodules
\begin{equation}
\label{E4.1.1}\tag{E4.1.1}{^{\mu_A} A^1} (-\bfl_A)\cong R^d \Gamma_{\fm}(A)^*
\cong \bigoplus_{g\in G} f_g^{\ast} 
\cdot \{^{\phi_g \mu_B} B^1 \}(-\bfl_B)
\end{equation}
where $f_g^*$ is a $B$-central generator of bidegree 
$(-\deg f_g,g^{-1})$. 
\item[(4)]
$\mu_A$ maps $B$ to $B$ and 
$$\mu_A\mid_B=\phi_{m}\mu_B
\quad {\text{or}}\quad
\mu_B=(\eta_m \mu_A)\mid_{B}.$$
\end{enumerate}
\end{lemma}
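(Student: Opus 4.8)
The plan is to treat parts (1) and (2) as formal identities about twisted bimodules, feed them into a single local-cohomology computation that yields (3), and then extract (4) by reading off the $\mathbb{Z}\times G$-bigraded pieces of the isomorphism \eqref{E4.1.1}. For part (1), I would note that the right $\sigma$-twist $(-)^{\sigma}$ is an exact autoequivalence of graded $A$-bimodules that leaves the underlying left $A$-module and its grading unchanged; since $\Gamma_{\fm}$ is defined purely through the left action, $\Gamma_{\fm}(M^{\sigma})=\Gamma_{\fm}(M)^{\sigma}$ holds on the nose, and through the description $R^i\Gamma_{\fm}(M)=\lim_{n}\Ext^i_A(A/A_{\geq n},M)$ (in which the right $A$-action is inherited from $M$) the identity propagates to every $R^i$. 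For part (2), I would check directly that the graded linear dual interchanges the two sides: the left action on $\Hom_{\Bbbk}(M,\Bbbk)$ is $(a\cdot\varphi)(x)=\varphi(xa)$, so twisting the right action of $M$ by $\sigma$ twists exactly the left action of the dual by $\sigma$, giving $\Hom_{\Bbbk}(M^{\sigma},\Bbbk)={^{\sigma}\Hom_{\Bbbk}(M,\Bbbk)}$.

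For part (3), the first isomorphism ${^{\mu_A}A^1}(-\bfl_A)\cong R^d\Gamma_{\fm}(A)^*$ is just \eqref{E1.2.3} applied to $A$ (alternatively, \eqref{E3.4.1} of Lemma \ref{xxlem3.4} already packages the dualizing-complex work and offers a shortcut). For the second isomorphism I would first invoke the change-of-rings identity $R^i\Gamma_{\fm}(A)\cong R^i\Gamma_{\fm_B}(A)$, where $\fm_B=B_{\geq 1}$: since $f_g\fm_B=\fm_B f_g$ for all $g$ (because $\phi_g$ from \eqref{E3.5.1} preserves $\fm_B$), the $\fm_B$-torsion submodule of any $A$-module is stable under left multiplication by the $f_g$, hence is an $A$-submodule and coincides with the $\fm_A$-torsion submodule; equal torsion functors have equal derived functors, and freeness over $B$ (Lemma \ref{xxlem3.3}(2)) lets one compute over $B$ (cf. \cite{AZ,VdB}). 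Next, using Theorem \ref{xxthm3.5}(1) I write $A=\bigoplus_g Bf_g\cong\bigoplus_g {^1 B^{\phi_g}}(-\deg f_g)$ as graded $B$-bimodules, the right $\phi_g$-twist coming from $f_g x=\phi_g(x)f_g$, with the $g$-summand in $G$-degree $g$. Since $d=\injdim A=\injdim B$ (Lemma \ref{xxlem3.4}(1)) and the functor commutes with this finite sum, part (1) moves it through the twist to give
\[
R^d\Gamma_{\fm}(A)\cong\bigoplus_{g\in G} R^d\Gamma_{\fm_B}(B)^{\phi_g}(-\deg f_g).
\]
Dualizing and applying part (2) together with $R^d\Gamma_{\fm_B}(B)^*\cong{^{\mu_B}B^1}(-\bfl_B)$ (which is \eqref{E1.2.3} for $B$) converts each right $\phi_g$-twist into a left $\phi_g$-twist and flips the $G$-grading, producing $\bigoplus_g f_g^*\cdot\{{^{\phi_g\mu_B}B^1}\}(-\bfl_B)$ with $f_g^*$ of bidegree $(-\deg f_g,g^{-1})$, which is \eqref{E4.1.1}.

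For part (4), I would read \eqref{E4.1.1} as a bigraded $B$-bimodule isomorphism and locate the summand matching $B=A_e$. As a right $B$-module, ${^{\mu_A}A^1}(-\bfl_A)$ is free on the $f_h$, with $f_h$ in $\mathbb{Z}$-degree $\deg f_h+\bfl_A$; matching $\mathbb{Z}$-degrees against the right-free generators $f_g^*$ of the target forces $\deg f_h+\deg f_g=\bfl_B-\bfl_A$. By Lemma \ref{xxlem3.4}(3) and the Frobenius structure of $A^{cov\; H}$ (Theorem \ref{xxthm3.5}(2)), $\bfl_A-\bfl_B=\bfl_{A^{cov\; H}}=-\deg f_m$, so $\deg f_h+\deg f_g=\deg f_m=\lrr(m)$; the socle pairing of the covariant ring identifies the partner of $h$ as the $g$ with $hg=m$, i.e. $g=h^{-1}m$. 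Taking $h=e$ shows $A_e$ corresponds to the summand $g=m$, namely ${^{\phi_m\mu_B}B^1}$. Hence the $\mu_A$-twisted copy of $A_e$ is a sub-bimodule isomorphic to ${^{\phi_m\mu_B}B^1}$, which forces $\mu_A(A_e)=A_e$ (so $\mu_A(B)=B$), and comparing the left twists gives $\mu_A\mid_B=\phi_m\mu_B$, equivalently $\mu_B=(\eta_m\mu_A)\mid_B$ since $\eta_m=\phi_m^{-1}$.

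The formal steps (1), (2), the direct-sum reduction, and the change-of-rings identity are routine once the $\phi_g$-normality is in hand. The delicate point — and the place where the mass element genuinely enters — is the bigraded bookkeeping in (4): the naive $G$-grading of $A$ is \emph{not} the $G$-grading under which \eqref{E4.1.1} is homogeneous, and one must combine the $\mathbb{Z}$-grading with the exact value $\deg f_m=\bfl_B-\bfl_A$ and the socle pairing $hg=m$ to see that $A_e$ pairs with the $g=m$ summand rather than the $g=e$ one. Getting this identification right, and hence the twist $\phi_m$ rather than no twist, is the crux; everything else is bookkeeping.
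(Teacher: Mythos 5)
Your proposal is correct and follows essentially the same route as the paper: parts (1) and (2) as formal twist identities, part (3) by combining \eqref{E1.2.3} with the change of rings $R^i\Gamma_{\fm}=R^i\Gamma_{\fm_B}$ and the free decomposition $A\cong\bigoplus_g {}^1B^{\phi_g}(-\deg f_g)$, and part (4) by locating the unique lowest-$\mathbb Z$-degree generator (forced to lie in the $g=m$ summand since $\bfl_B-\bfl_A=\deg f_m$ and $m$ is the unique element of maximal length) and comparing left twists. The paper phrases (4) as an equation $s\,\mu_A(b)=b\cdot s=s\,(\mu_B\phi_m)(b)$ on that generator $s$ rather than via the socle pairing, but this is the same computation; your additional claim that a general $f_h$ pairs with $g=h^{-1}m$ is not needed for the conclusion.
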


\begin{proof}
(1,2) Straightforward.

(3) By \eqref{E1.2.3}, 
\begin{equation}
\label{E4.1.2}\tag{E4.1.2}
{^{\mu_A} A^1} (-\bfl_A)\cong R^d 
\Gamma_{\fm}(A)^*.
\end{equation} 
Since $A$ is finitely generated over $B$ 
on both sides, $R^i\Gamma_{\fm}(M)=R^i\Gamma_{\fm_B}(M)$ for
all $i$ and all graded $A$-bimodules $M$ \cite{AZ}. Here $\fm_B=
B_{\geq 1}$. By Lemma \ref{xxlem3.4}(1),
$A$ and $B$ have the same injective dimension, say $d$.
Hence we have ${\mathbb Z}\times G$-graded isomorphisms
of $B$-bimodules
$$\begin{aligned}
R^d\Gamma_{\fm}(A)^*
&\cong R^d\Gamma_{\fm_B}(\bigoplus_{g\in G} f_g \cdot B)^*\\
&\cong \bigoplus_{g\in G} 
R^d\Gamma_{\fm_B}( f_g \cdot {^1 B^{\phi_g}})^* \quad {\text{viewing $f_g$ as a
$B$-central generator}}\\
&\cong \bigoplus_{g\in G} f_g^* \cdot R^d\Gamma_{\fm_B}({^1 B^{\phi_g}})^*
\quad {\text{viewing $f_g^*$ as a
$B$-central generator}}\\
&\cong \bigoplus_{g\in G} f_g^* \cdot (R^d\Gamma_{\fm_B}(B)^{\phi_g})^*\\
&\cong \bigoplus_{g\in G} f_g^* \cdot {^{\phi_g}(R^d\Gamma_{\fm_B}(B)^*)}\\
&\cong \bigoplus_{g\in G} f_g^* \cdot {^{\phi_g}(^{\mu_B} B^1)(-\bfl_B)}\\
&\cong \bigoplus_{g\in G} f_g^* \cdot {^{\mu_B\circ \phi_g}B^1}(-\bfl_B).
\end{aligned}
$$
Combining the above with \eqref{E4.1.2}, we obtain 
\eqref{E4.1.1}. 

(4) Let $s$ be the lowest ${\mathbb Z}$-degree element 
in ${^{\mu_A} A^1} (-\bfl_A)$, which corresponds to $f_{m}^*$ 
by \eqref{E4.1.1}. For every $b\in B$, by using
\eqref{E4.1.1},
$$s\mu_A(b) =b\cdot s= s (\mu_B\phi_m)(b)$$
for all $b\in B$. Then $\mu_A(b)=\mu(B)\phi_{m}(b)$ 
for all $b\in B$. The assertion follows. 
\end{proof}

\subsection{Nakayama automorphism of $A\# \Bbbk^G$}
\label{xxsec4.2}
Since $\Bbbk^G$ is commutative, $\mu_{\Bbbk^G}$ is the identity.
By \cite[Theorem 0.2]{RRZ2}, the Nakayama automorphism of
$A\# \Bbbk^G$ is given by
$$\mu_{A\# \Bbbk^G}=\mu_A \# (\mu_{\Bbbk^G} \circ \Xi^l_{\hdet})
=\mu_A \# \Xi^l_{\hdet}$$
where $\hdet$ is the homological determinant of the 
$\Bbbk^G$-action on $A$, and $\Xi^l_{\hdet}$ is the corresponding 
left winding automorphism. We start with a calculation of
the $\hdet$.

\begin{proposition}
\label{xxpro4.2} Retain the above notation.
\begin{enumerate}
\item[(1)]
The homological determinant $\hdet$ of the $\Bbbk^G$ action 
on $A$ is the projection of $\Bbbk^G$ onto $\Bbbk p_{m^{-1}}$ 
where $m$ is the unique maximal length element 
in $G$. As a consequence, $m^{-1}$ is the homological
codeterminant of $G$-coaction on $A$. 
\item[(2)]
$\bfl_{A_e}=\bfl_A+ \deg \fp_\Re(t)$.
\item[(3)]
The left winding automorphism $\Xi^l_{\hdet}$ is the 
left translation $trans^l_{m}$ by $m$.
\end{enumerate}
\end{proposition}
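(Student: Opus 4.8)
The plan is to handle the three parts in the order (2), (1), (3): part (2) is a self-contained computation with Hilbert series and AS indices, part (1) is the homological core, and part (3) is a one-line formal consequence of part (1).

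For part (2), the quickest route is Lemma \ref{xxlem3.4}(3) applied to the subalgebra $B=A_e$ and the factor $C=A^{cov\; H}=A/I$ with $I=A(A_e)_{\geq 1}$ (the hypotheses of Lemma \ref{xxlem3.4} hold by Lemma \ref{xxlem3.3}(2) and Theorem \ref{xxthm3.5}(2)): this gives $\bfl_C=\bfl_A-\bfl_{A_e}$. Now $C$ is a connected graded Frobenius algebra (Theorem \ref{xxthm3.5}(2)) whose Hilbert series is $\fp_\Re(t)$ (Theorem \ref{xxthm3.5}(5)), a palindrome of degree $\lrr(m)=\deg\fp_\Re(t)$; comparing the extreme nonzero degrees on the two sides of the Frobenius isomorphism $C^*\cong C(-\bfl_C)$ forces $\bfl_C=-\deg\fp_\Re(t)$. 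Combining the two displays yields $\bfl_{A_e}=\bfl_A+\deg\fp_\Re(t)$. (Equivalently, one feeds the vector-space factorization $H_A(t)=H_{A_e}(t)\,\fp_\Re(t)$ and the palindromy of $\fp_\Re(t)$ into the AS functional equations of the Hilbert series of $A$ and $A_e$, which have equal injective dimension by Lemma \ref{xxlem3.4}(1).)

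For part (1), the key input is the $\mathbb Z\times G$-graded identification \eqref{E4.1.1} of Lemma \ref{xxlem4.1} together with \eqref{E1.2.3}. Reading off the $G$-grading in \eqref{E4.1.1}, the one-dimensional socle line (lowest $\mathbb Z$-degree) of $R^d\Gamma_{\fm}(A)^*\cong{}^{\mu_A}A^1(-\bfl_A)$ sits in the summand indexed by the element $g$ with $\deg f_g=\lrr(g)$ maximal, i.e. $g=m$; its generator is $f_m^*$, of $G$-degree $m^{-1}$ (the $A_e$-factor being concentrated in $G$-degree $e$). By the definition of the homological (co)determinant in \cite[Definition 6.2]{KKZ3} (equivalently \cite[Definition 1.4(b)]{CWZ}), which records the $G$-grading on this top local-cohomology line, the homological codeterminant of the $G$-coaction is $D=m^{-1}$; translating the $G$-grading into the $\Bbbk^G$-action (and tracking the antipode $S(p_g)=p_{g^{-1}}$), the homological determinant $\hdet\colon\Bbbk^G\to\Bbbk$ is evaluation at $m^{-1}$, i.e. the projection onto $\Bbbk p_{m^{-1}}$. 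In particular the mass element $m=D^{-1}$ of Definition \ref{xxdef0.2} is the unique maximal-length element, reconciling the homological and the combinatorial descriptions of $m$.

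For part (3), I unwind the left winding automorphism of the character $\hdet$ determined in part (1). Using $\hdet(p_h)=\delta_{h,m^{-1}}$ and $\Delta(p_g)=\sum_{h\in G}p_h\otimes p_{h^{-1}g}$, one computes
$$\Xi^l_{\hdet}(p_g)=(\hdet\otimes\id)\Delta(p_g)=\sum_{h\in G}\hdet(p_h)\,p_{h^{-1}g}=p_{mg},$$
which is precisely the left translation $trans^l_m$.

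The main obstacle is part (1): the bookkeeping needed to match the inverse/antipode conventions in the definitions of $\hdet$ and of the codeterminant $D$ in \cite{KKZ3, CWZ} against the $G$-degree $m^{-1}$ that is visible in \eqref{E4.1.1}, so that the answer comes out as $\hdet=$ projection onto $\Bbbk p_{m^{-1}}$ and $D=m^{-1}$ rather than their inverses. Part (3) is the consistency check: only the value $m^{-1}$ produces $\Xi^l_{\hdet}=trans^l_m$, and hence, via \cite[Theorem 0.2]{RRZ2}, the clean formula $\mu_{A\#\Bbbk^G}=\mu_A\# trans^l_m$. Parts (2) and (3) are otherwise routine, being a comparison of extreme Hilbert-series degrees and a one-line Hopf-algebraic computation respectively.
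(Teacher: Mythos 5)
Your proposal is correct, and each part lands on the same final computation as the paper, but the route through parts (1) and (2) is organized differently. The paper proves (1) and (2) in a single intertwined argument: it invokes [KKZ3, Proposition 5.3(d)] to express $\hdet$ as $\eta'\circ S$ where $\eta'=pr_g$ records the $G$-degree of a generator $u$ of $\Ext^d_A(\Bbbk,A)$, and then pins down $g=m$ by computing $\Ext^d_{A_e}(\Bbbk,A)$ in two ways (change of rings versus the free decomposition $A\cong A_e\otimes A^{cov\,\Bbbk^G}$) and comparing the bidegrees of the lowest $\mathbb{Z}$-degree elements; the identity $\bfl_{A_e}=\bfl_A+\deg\fp_\Re(t)$ falls out of the same bidegree comparison. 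You instead read the $G$-degree $m^{-1}$ of the socle line directly off the $\mathbb{Z}\times G$-graded isomorphism \eqref{E4.1.1} of Lemma \ref{xxlem4.1}(3) — a dual but essentially equivalent degree-chase on $R^d\Gamma_{\fm}(A)^*$ rather than on $\Ext^d_{A_e}(\Bbbk,A)$ — and you obtain part (2) independently from Lemma \ref{xxlem3.4}(3) together with the extreme-degree comparison in $C^*\cong C(-\bfl_C)$; the paper itself endorses exactly this alternative for part (2) in Remark \ref{xxrem4.3}(2). Your part (3) is verbatim the paper's computation. The only soft spot in both treatments is the antipode/inverse bookkeeping that converts the $G$-degree of the distinguished cohomology line into the statement $\hdet=pr_{m^{-1}}$; you flag this honestly and your conventions come out consistent with the paper's, as confirmed by the fact that only $\hdet=pr_{m^{-1}}$ yields $\Xi^l_{\hdet}=trans^l_{m}$.
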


\begin{remark}
\label{xxrem4.3} Retain the notation as in Proposition 
\ref{xxpro4.2}.
\begin{enumerate}
\item[(1)]
Proposition \ref{xxpro4.2}(1) 
asserts that $m$ is in fact the mass element of 
$G$-coaction on $A$ defined in Definition \ref{xxdef0.2}.
\item[(2)]
Proposition \ref{xxpro4.2}(2) follows also from Lemma 
\ref{xxlem3.4}(3).
\end{enumerate}
\end{remark}

\begin{proof}[Proof of Proposition \ref{xxpro4.2}]
(1,2) Let $H=\Bbbk^G$.
By \cite[Proposition 5.3(d)]{KKZ3},
$\hdet$ is determined
by the following:
Let $u$ be a nonzero element in $\Ext^d_A(\Bbbk, A)$ where $d$ is
the injective dimension of $A$. Then there is an induced 
algebra homomorphism 
$\eta': H \to \Bbbk$ satisfying
$$h\cdot u= \eta(h) u$$
for all $h\in H$. The homological determinant $\hdet$ is equal 
to $\eta'\circ S$. Since 
$H=\Bbbk^G$, the $\eta'$ is the projection $pr_g$ from $H$ to $\Bbbk p_g$ for
some $g\in G$. In this case, we just say that $\eta'$
corresponds to a group element $g\in G$.
In fact, when we view $A$ is a $G$-graded algebra, $g$ is the
$G$-degree of $u$. Thus $\hdet=\eta' \circ S=pr_{g^{-1}}$ 
corresponds to the element $g^{-1}$. 

Now we consider $\Ext^d_{A_e}(\Bbbk,A)$. Since $A$ is free over $A_e$
on the left and the right, by the change of rings, there are 
isomorphisms of $G$-graded vector spaces
$$\Ext^d_{A_e}(\Bbbk,A)\cong
\Ext^d_{A}(A\otimes_{A_e} \Bbbk, A)\cong
\Ext^d_{A}(A^{cov\; \Bbbk^G}, A)
\cong (A^{cov\; \Bbbk^G})^* \otimes_\Bbbk \Ext^d_{A}(\Bbbk,A).$$
Since $A=\bigoplus_{g\in G} f_g A_e\cong A_e\otimes A^{cov\; \Bbbk^G}$
as $G$-graded $A_e$-module, $\Ext^d_{A_e}(\Bbbk,A)
\cong \Ext^d_{A_e}(\Bbbk, A_e)\otimes A^{cov\; \Bbbk^G}$. 

By definition, $\Ext^d_{A_e}(\Bbbk,A_e)$ has bidegree $(-\bfl_{A_e}, e)$, while 
$\Ext^d_A(\Bbbk,A)$ has bidegree $(-\bfl_{A}, g)$. Since $\Ext^d_{A_e}(\Bbbk,A)
\cong \Ext^d_{A_e}(\Bbbk, A_e)\otimes A^{cov\; \Bbbk^G}$, the lowest ${\mathbb Z}$-degree
element in $\Ext^d_{A_e}(\Bbbk,A)$ has bidegree $((-\bfl_{A_e}, e)$, and the lowest 
${\mathbb Z}$-degree element in $(A^{cov\; \Bbbk^G})^* \otimes_\Bbbk \Ext^d_{A}(\Bbbk,A)$ 
has bidegree $(-\deg \fp_{\Re}(t), -m)+(-\bfl_A, g)$. Therefore $g=m$ and 
$\bfl_{A_e}=\bfl_A+\deg \fp_\Re(t)$. Hence the assertions follow.

(3) By definition, for any $p_h\in \Bbbk^G$, 
$$\Xi^l_{\hdet} (p_h)= \sum \hdet ((p_h)_1) (p_h)_2=\sum \hdet (p_s) p_{s^{-1}h}
=\hdet(p_{m^{-1}}) p_{mh}=p_{mh}$$
which is the left translation by $m$. 
\end{proof}

We have an immediate consequence.

\begin{corollary}
\label{xxcor4.4}
The Nakayama automorphism of the $A\# \Bbbk^G$ is given by
$\mu_{A\# \Bbbk^G} =\mu_A\# trans^l_{m}$.
\end{corollary}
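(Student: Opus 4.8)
The plan is to read the statement off directly from the smash-product formula for the Nakayama automorphism, specialized to the present setting; the genuine work has already been done in Proposition \ref{xxpro4.2}. The starting point is the general identity recorded at the opening of Section \ref{xxsec4.2}: by \cite[Theorem 0.2]{RRZ2}, for the semisimple Hopf algebra $H=\Bbbk^G$ acting on the noetherian AS regular algebra $A$, the Nakayama automorphism of the smash product decomposes as
$$\mu_{A\#\Bbbk^G}=\mu_A\#\bigl(\mu_{\Bbbk^G}\circ\Xi^l_{\hdet}\bigr).$$
First I would invoke the commutativity of $\Bbbk^G$, which forces $\mu_{\Bbbk^G}=\id$, so the inner factor collapses to $\Xi^l_{\hdet}$ alone.

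It then remains only to substitute the explicit description of the winding automorphism. The essential input is Proposition \ref{xxpro4.2}(3), which identifies $\Xi^l_{\hdet}$ with the left translation $trans^l_m$ by the mass element $m$. Inserting this into the displayed formula yields $\mu_{A\#\Bbbk^G}=\mu_A\#trans^l_m$, which is exactly the assertion.

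The real content therefore lies upstream, and the two nontrivial steps are both part of Proposition \ref{xxpro4.2}: the identification of $\hdet$ with the projection onto $\Bbbk p_{m^{-1}}$ in part (1), which pins down the homological (co)determinant as the mass element, and the computation in part (3) that converts this projection into a left translation via the defining formula $\Xi^l_{\hdet}(p_h)=\sum\hdet((p_h)_1)(p_h)_2$. Once those are in hand, the corollary requires no further argument beyond the substitution. I expect no real obstacle at this last step; the only point worth double-checking is that the conventions in \cite{RRZ2} (left versus right winding automorphism, and the direction of the translation) match those fixed in the setup preceding Proposition \ref{xxpro4.2}, so that the resulting factor is genuinely $trans^l_m$ rather than its inverse or a right translation. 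Since that matching has effectively been arranged already, the proof is immediate.
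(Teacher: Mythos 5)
Your proposal is correct and follows the same route as the paper: apply \cite[Theorem 0.2]{RRZ2}, use commutativity of $\Bbbk^G$ to kill $\mu_{\Bbbk^G}$, and substitute Proposition \ref{xxpro4.2}(3) for $\Xi^l_{\hdet}$. The paper's only additional remark is that $S^2=\id$ because $\Bbbk^G$ is semisimple (needed to match the exact form of the cited theorem), which is the kind of convention check you flagged anyway.
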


\begin{proof}
Since $\Bbbk^G$ is semisimple, $S^2$ is the identity. Then 
assertion follows from \cite[Theorem 0.2]{RRZ2} 
and Proposition \ref{xxpro4.2}.
\end{proof}

%

\subsection{Nakayama automorphism of $A^{cov\; \Bbbk^G}$}
\label{xxsec4.3}
We have an exact sequence of graded algebras
$$1\to A_e\to A\to A^{cov\; \Bbbk^G}\to 1.$$
Also we can describe $A^{cov\; \Bbbk^G}$ as a skew Hasse
algebra, with $\Bbbk$-linear basis $\{\overline{f_g}\mid g\in G\}$, 
and the multiplication of $A^{cov\; \Bbbk^G}$ is determined
by 
$$\overline{f_g}\cdot \overline{f_h}
=\begin{cases} \alpha(g,h)\overline{f_{gh}} & \lrr(gh)=\lrr(g)+\lrr(h),\\
0& \lrr(gh)<\lrr(g)+\lrr(h),\end{cases}$$
where $\{\alpha(g,h)\mid \lrr(gh)=\lrr(g)+\lrr(h)\}$ is a set of 
nonzero scalars in $\Bbbk$, see Definition \ref{xxdef2.3}(2).
Since $\lrr(g)=\lrr(mgm^{-1})$ [Theorem \ref{xxthm2.10}]
and $\lrr(mg^{-1})=\lrr(m)-\lrr(g)$ [Proof of Theorem \ref{xxthm2.9}], 
we have 
$$\begin{aligned}
\overline{f_{mg^{-1}}}\cdot \overline{f_g}
&=\alpha(mg^{-1},g) \overline{f_m}\\
&=\alpha(mg^{-1},g) \alpha(mgm^{-1},mg^{-1})^{-1} 
\overline{f_{mgm^{-1}}}\cdot \overline{f_{mg^{-1}}}.
\end{aligned}
$$
Combining the above with \eqref{E2.11.1}, the Nakayama automorphism 
of $A^{cov\; \Bbbk^G}$ is
\begin{equation}
\label{E4.4.1}\tag{E4.4.1}
\mu: \overline{f_g}\to \beta(g) \overline{f_{mgm^{-1}}}
\end{equation}
with 
\begin{equation}
\label{E4.4.2}\tag{E4.4.2}
\beta(g)=\alpha(mg^{-1},g) \alpha(mgm^{-1},mg^{-1})^{-1}
\end{equation} 
for all $g\in G$.

If $f_m$ is a normal element in $A$ (which is a domain), then we can define
the conjugation automorphism 
\begin{equation}
\label{E4.4.3}\tag{E4.4.3}
\phi_{m}: x\to f_{m} x f_{m}^{-1}, \quad \forall\; x\in A
\end{equation}
which agrees with the form given in \eqref{E3.5.1} when restricted to 
the subalgebra $B=A^{cov\; \Bbbk^G}$.

\begin{proposition}
\label{xxpro4.5} 
Suppose $G$ coacts on $A$ as a dual reflection group.
\begin{enumerate}
\item[(1)]
$f_m$ is a normal element.
\item[(2)]
The Nakayama automorphism of $A^{cov\; \Bbbk^G}$
is equal to the induced automorphism of $\phi_{m}$
of $A$ given in \eqref{E4.4.3}.
\end{enumerate}
\end{proposition}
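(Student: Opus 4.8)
The plan is to prove (1) and (2) in one stroke, treating the normality of $f_m$ as the decisive step and reading the Nakayama scalars off the very same computation. Throughout I would work in the graded quotient division ring $Q$ of the noetherian domain $A$, where every $f_g$ is invertible, and use the decomposition $A=\bigoplus_{g\in G}Bf_g=\bigoplus_{g\in G}f_gB$ with $B=A_e$ from Theorem~\ref{xxthm3.5}(1), together with the structure constants $c_{g,h}\in B$, $f_gf_h=c_{g,h}f_{gh}$, of \eqref{E3.5.2}. Since $\deg f_g=\lrr(g)$, the element $c_{g,h}$ is homogeneous of degree $\lrr(g)+\lrr(h)-\lrr(gh)$, hence a nonzero scalar exactly when the product is length-additive, in which case it coincides with the skew Hasse structure constant $\alpha(g,h)$ of $A^{cov\; \Bbbk^G}$.

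First I would record the combinatorial input. Because $\fp_\Re(t)$ is palindrome (Theorem~\ref{xxthm3.5}(5)), Theorems~\ref{xxthm2.9} and \ref{xxthm2.10} apply, $m$ is the unique longest element, and $\lrr(g^{-1}m)=\lrr(m)-\lrr(g)$ holds for all $g$ (established inside the proof of Theorem~\ref{xxthm2.9}); applying this to $mg^{-1}$ in place of $g$ and simplifying $(mg^{-1})^{-1}m=g$ gives also $\lrr(mg^{-1})=\lrr(m)-\lrr(g)$. Thus both factorizations $m=g\cdot(g^{-1}m)$ and $m=(mg^{-1})\cdot g$ are length-additive, so their structure constants are scalars:
\[
f_g\,f_{g^{-1}m}=\lambda_g\,f_m,\qquad f_{mg^{-1}}\,f_g=\nu_g\,f_m,\qquad \lambda_g=\alpha(g,g^{-1}m),\ \ \nu_g=\alpha(mg^{-1},g)\in\Bbbk^{\times}.
\]

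The hard part will be the normality of $f_m$, and this is exactly where the two scalar relations do all the work. Conjugation by $f_m$ is a priori only an automorphism of $Q$; I would show it preserves $A$ by a direct cancellation. Solving the relations above for $f_{g^{-1}m}=\lambda_g f_g^{-1}f_m$ and $f_mf_g^{-1}=\nu_g^{-1}f_{mg^{-1}}$ and substituting,
\[
\phi_m(f_{g^{-1}m})=f_m\,f_{g^{-1}m}\,f_m^{-1}=\lambda_g\,f_m f_g^{-1}=\frac{\lambda_g}{\nu_g}\,f_{mg^{-1}}\ \in A .
\]
The point is that the two copies of $f_m$ cancel, leaving a scalar multiple of $f_{mg^{-1}}=f_{m(g^{-1}m)m^{-1}}\in A$. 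As $g$ ranges over $G$ so does $g^{-1}m$, so $\phi_m(f_h)\in A$ for every $h$; together with $\phi_m(B)=B$ (from $f_mB=Bf_m$) and $A=\sum_hBf_h$ this yields $\phi_m(A)=\sum_hBf_{mhm^{-1}}=A$. Hence $f_mA=Af_m$, which is (1), and $\phi_m$ restricts on $A_e$ to the automorphism of \eqref{E3.5.1}.

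Finally, for (2) I would descend $\phi_m$ to the quotient. Normality makes the ideal $I=B_{\geq1}A$ with $A^{cov\; \Bbbk^G}=A/I$ stable under $\phi_m$, so it induces $\overline{\phi_m}\in\Aut(A^{cov\; \Bbbk^G})$. Writing $h=g^{-1}m$ (so $g=mh^{-1}$) in the displayed computation gives $\overline{\phi_m}(\overline{f_h})=\beta'(h)\,\overline{f_{mhm^{-1}}}$ with
\[
\beta'(h)=\frac{\lambda_{mh^{-1}}}{\nu_{mh^{-1}}}=\frac{\alpha(mh^{-1},h)}{\alpha(mhm^{-1},mh^{-1})},
\]
where I used $\lambda_{mh^{-1}}=\alpha(mh^{-1},h)$ and $\nu_{mh^{-1}}=\alpha(mhm^{-1},mh^{-1})$. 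Comparing with \eqref{E4.4.2}, this is precisely $\beta(h)$, so $\overline{\phi_m}$ agrees on every basis vector with the Nakayama automorphism of $A^{cov\; \Bbbk^G}$ recorded in \eqref{E4.4.1}; therefore $\overline{\phi_m}=\mu_{A^{cov\; \Bbbk^G}}$, which is (2). The whole argument thus hinges on the single observation that the two reduced factorizations of the malth element $m$ force scalar cocycles, making the conjugation collapse back into $A$.
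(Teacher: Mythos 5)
Your argument is correct and is essentially the paper's own proof in multiplicative disguise: after the substitution $h=g^{-1}m$, your two scalar relations $f_{mh^{-1}}f_h=\alpha(mh^{-1},h)f_m$ and $f_{mhm^{-1}}f_{mh^{-1}}=\alpha(mhm^{-1},mh^{-1})f_m$ are exactly the two length-additive factorizations the paper uses to derive $f_mf_g=\beta(g)f_{mgm^{-1}}f_m$ and match the scalar with \eqref{E4.4.2}. The only cosmetic difference is that you conjugate inside the graded quotient division ring and then verify $\phi_m(A)=A$, whereas the paper writes the same cancellation as a direct substitution and checks normality on the generators $B\cup\{f_g\}$.
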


\begin{proof} (1) Let $B=A^{cov\; \Bbbk^G}$. If $b\in B$, then 
$f_m b= b' f_m$ by Theorem \ref{xxthm3.5}(1). If $x=f_g$, 
we have
$$\begin{aligned}
f_{m} f_{g} &=\alpha(mgm^{-1},mg^{-1})^{-1} f_{mgm^{-1}}
f_{mg^{-1}}f_g \\
&=\alpha(mgm^{-1},mg^{-1})^{-1} f_{mgm^{-1}}
\alpha(mg^{-1},g)f_{m}\\
&= \beta(g) f_{mgm^{-1}} f_{m}.
\end{aligned}
$$
Since $A$ is generated by $B$ and $\{f_g\mid g\in G\}$,
$f_m$ is normal.

(2) By the computation in the proof of part (1),
$$f_{m} f_{g}= \beta(g) f_{mgm^{-1}} f_{m}$$
which is equivalent to 
$\phi_{m}(f_g)=\beta(g) f_{mgm^{-1}}$ for all $g\in G$.
The assertion follows by \eqref{E4.4.1}.
\end{proof}

We finish this section with proofs of the main results.

\begin{proof}[Proof of Theorem \ref{xxthm0.3}]
(1) This is Theorem \ref{xxthm3.5}(1).

(2) This follows by combining parts (3) and (4) 
of Theorem \ref{xxthm3.5}.

(3) By Theorem \ref{xxthm3.5}(5), $A^{cov\; \Bbbk^G}$
is a skew Hasse algebra. Note that $\deg (f_g)=
\deg (\overline{f_g})$ where $\overline{f_g}$ is
the image of $f_g$ in $A^{cov\; \Bbbk^G}$. As a $G$-graded
vector space, any skew Hasse algebra is isomorphic to 
the associated Hasse algebra. The assertion follows
by Remark \ref{xxrem2.4}(3). 

(4) We use $m$ to denote the element in $G$ of the maximal
length with respect to $\lrr$. By Remark \ref{xxrem4.3}(1),
$m$ equals the mass element defined in Definition \ref{xxdef0.2}.

(5) 
Since $A$ is free over $A^H$ and $A\cong A^H\otimes A^{cov\; H}$
as graded vector spaces, $\fp(t)$ is the Hilbert series of
$A^{cov\; H}$. Since $A^{cov\; \Bbbk^G}$ is a skew Hasse algebra,
$\fp(1)=|G|$ and $\deg \fp(t)=\lrr(m)$. By Theorem \ref{xxthm3.5}(5),
$\fp(t)$ is palindrome and is a product of cyclotomic
polynomials.
\end{proof}

\begin{proof}[Proof of Theorem \ref{xxthm0.4}]
(1) By Lemma \ref{xxlem3.4}(2), $C=A^{cov\; \Bbbk^G}$ is Frobenius.
Since $A^{cov\; \Bbbk^G}$ is a skew Hasse algebra 
[Theorem \ref{xxthm3.5}(5)], $\dim A^{cov\; \Bbbk^G}=|G|$.

(2) The first assertion is proven in the proof of Theorem \ref{xxthm0.3}(5).
The consequence is clear.

(3) This is \eqref{E4.4.1} or the proof of Proposition \ref{xxpro4.5}.

(4) By Proposition \ref{xxpro4.5}(2), $\mu_{A^{cov\; \Bbbk^G}}$ 
is induced by the conjugation $\phi_{m}: x\to f_{m} x f_{m}^{-1}$.
Note that $\phi_{m}(f_m)=f_{m}$, which implies that 
$\beta(m)=1$. For any $g,h\in G$ with $\lrr(gh)=\lrr(g)+\lrr(h)$,
we have $f_g f_{h}=\alpha(g,h) f_{gh}$ for some nonzero
scalar $\alpha(g,h)$. Then 
$$\begin{aligned}
\phi_{m}(f_{gh})&= \alpha(g,h)^{-1} \phi_{m}(f_g f_h)\\
&=\alpha(g,h)^{-1}\phi_{m}(f_g) \phi_{m}(f_h)\\
&=\alpha(g,h)^{-1}\beta(g) \beta(h) f_{mgm^{-1}} f_{mhm^{-1}}\\
&= \alpha(g,h)^{-1}\beta(g) \beta(h) \alpha(mgm^{-1}, mhm^{-1}) 
f_{mghm^{-1}}\\
&= \alpha(g,h)^{-1}\alpha(mgm^{-1}, mhm^{-1}) 
\beta(g) \beta(h) f_{mghm^{-1}}.
\end{aligned}
$$
Hence $\beta(gh)=\alpha(g,h)^{-1}\alpha(mgm^{-1}, mhm^{-1}) 
\beta(g) \beta(h)$ for all $g,h$ satisfying  $\lrr(gh)=\lrr(g)+\lrr(h)$.
If $m$ commutes with $g$ and $h$, then $\beta(gh)=\beta(g) \beta(h)$.
\end{proof}

\begin{proof}[Proof of Theorem \ref{xxthm0.5}]
(1) This is Proposition \ref{xxpro4.5}(1).

(2) See Proposition \ref{xxpro4.5}(1) and its proof.

(3) This follows from \cite[Lemma 5.3(b)]{RRZ2}.

(4) This follows from Theorem \ref{xxthm3.5}(1).
\end{proof}

\begin{proof}[Proof of Theorem \ref{xxthm0.6}]
(1) This is Lemma \ref{xxlem4.1}(4).

(2) By \cite[Lemma 5.3(b)]{RRZ2}, $\mu_{A^{\Bbbk^G}}$ is 
in the center of the group $\Aut_{gr}(A^{\Bbbk^G})$. The assertion follows
from part (1).

(3) This is Proposition \ref{xxpro4.2}(1).

(4) This is Corollary \ref{xxcor4.4}.
\end{proof}

\section{Rigidity}
\label{xxsec5}

In this section we prove that some families of AS regular 
algebras are rigid with respect to group coactions. 
Assume that $\Bbbk$ is an algebraic closed field of 
characteristic zero in this section. 

\begin{lemma}
\label{xxlem5.1} Let $A$ be a noetherian AS regular domain
generated in degree one and $G$ be a non-trivial finite
group coacting on $A$ homogeneously and inner faithfully 
as a dual reflection group. Then there is a finite set of
nonzero elements $\{z_1,\cdots, z_w\}$ in degree one such that 
the product $z_1 z_2\cdots z_w$ is a normal element in $A$.
\end{lemma}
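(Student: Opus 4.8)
The plan is to realize the required normal element as an ordered product of the degree-one elements $f_g$ coming from a reduced expression of the mass element $m$. The essential input is already available: by Theorem \ref{xxthm0.5}(1) (equivalently Proposition \ref{xxpro4.5}(1)) the homogeneous element $f_m$ is normal in $A$, where $m$ is the unique element of $G$ of maximal length. Since $G$ is non-trivial, $m\neq e$, so $d:=\lrr(m)\geq 1$.

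Concretely, I would fix a reduced expression $m=v_1v_2\cdots v_d$ with each $v_i\in\Re$ and set $z_i:=f_{v_i}$ (repetitions allowed among the $v_i$). As $\lrr(v_i)=1$, Theorem \ref{xxthm0.3}(3) gives $\deg z_i=1$, and each $z_i$ is a nonzero element of $A_1$. The crux is to identify $z_1z_2\cdots z_d$ with a nonzero scalar multiple of $f_m$, which I would do purely by gradings rather than by manipulating the structure constants $c_{g,h}$. Since $z_i$ lies in the $G$-component $A_{v_i}$ and the $G$-grading satisfies $A_gA_h\subseteq A_{gh}$, the product lies in $A_{v_1\cdots v_d}=A_m$; as a product of $d$ elements of $\mathbb Z$-degree $1$ it also lies in $\mathbb Z$-degree $d$. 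By Theorem \ref{xxthm0.3}(1) we have $A_m=f_m\cdot A^H$, so $z_1\cdots z_d=f_m\,b$ for some $b\in A^H$. Comparing $\mathbb Z$-degrees and using $\deg f_m=\lrr(m)=d$ (Theorem \ref{xxthm0.3}(3)), we find that $b$ has degree $0$, hence $b\in\Bbbk$; moreover $b\neq 0$ because $A$ is a domain and the $z_i$ are nonzero. Thus $z_1\cdots z_d=c\,f_m$ with $c\in\Bbbk^{\times}$.

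It then follows at once that $z_1\cdots z_d$ is normal, differing from the normal element $f_m$ only by the nonzero scalar $c$, and this gives the lemma with $w=d$. I do not anticipate a serious obstacle, precisely because the substantive difficulty---the normality of $f_m$---was already settled through the identification of the Nakayama automorphism of the covariant ring $A^{cov\; \Bbbk^G}$ with the conjugation $\phi_m$ (Proposition \ref{xxpro4.5}). The one genuine design choice is to take a \emph{reduced} word for $m$: this is what forces the $\mathbb Z$-degree of the product to equal $\deg f_m$, so that the $G$-grading pins the product down to $\Bbbk f_m$ rather than to the larger space $f_m\cdot (A^H)_{\geq 1}$, for which normality would not be automatic.
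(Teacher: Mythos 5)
Your proposal is correct and follows essentially the same route as the paper: both take a reduced expression $m=v_1\cdots v_w$ with $v_i\in\Re$, set $z_i=f_{v_i}$, identify the product with a nonzero scalar multiple of $f_m$, and conclude by the normality of $f_m$ from Theorem \ref{xxthm0.5}(1). The only cosmetic difference is that you pin down the scalar by a single $\mathbb{Z}\times G$-degree comparison, whereas the paper reaches the same identity by induction using \eqref{E3.5.2} and the fact that $c_{g,h}$ is a scalar when lengths add.
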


\begin{proof} Let $m$ be the mass element as in Definition 
\ref{xxdef0.2} and let $l_{\Re}(m)=w$ be the length of $m$
with respect to $\Re$ as given in Theorem \ref{xxthm0.3}(2).
Write $m=g_1 g_2 \cdots g_w$ where $g_i\in \Re$. 

Let $f_{g}$ be the element in $A$ as defined in Theorem 
\ref{xxthm0.3}. Since $l_{\Re}(m)=\sum_{i=1}^w l_{\Re}(g_i)$,
by the discussion after Theorem \ref{xxthm3.5}, \eqref{E3.5.2}
and induction, one see that 
\begin{equation}
\label{E5.1.1}\tag{E5.1.1}
f_{m}=c f_{g_1}f_{g_2}\cdots f_{g_w}
\end{equation}
for some nonzero scalar $c$. Since $f_{m}$ is normal 
by Theorem \ref{xxthm0.5}(1), the assertion follows
by setting $z_i= f_{g_i}$ for $i=1,2,\cdots,w$.
\end{proof}

Let ${\mathfrak g}$ be a finite dimensional Lie algebra, 
and $U({\mathfrak g})$ be the universal enveloping algebra
of ${\mathfrak g}$. Let $H({\mathfrak g})$ be the homogenization
of $U({\mathfrak g})$. Note that $H({\mathfrak g})$
is a connected graded algebra generated by the vector space 
${\mathfrak g}\oplus  \Bbbk  t$ subject to the relations
$$at = ta, \quad {\text{and}} \quad ab-ba=[a,b]t$$
for all $a,b\in {\mathfrak g}$. 
It is well known that $H({\mathfrak g})$ is a noetherian 
AS domain of global dimension $d:=\dim {\mathfrak g}+1$ and
its Hilbert series is $(1-t)^{-d}$. 

The proof of part (2) of the following lemma is due to Monty 
McGovern. We thank him for allowing us to include his proof 
here. 

\begin{lemma}
\label{xxlem5.2} 
Let $H({\mathfrak g})$ be the homogenization of the universal
enveloping algebra of a finite dimensional semisimple Lie algebra.
\begin{enumerate}
\item[(1)]
If $f\in H({\mathfrak g})$ is a homogeneous normal element, then $f$ is central.
\item[(2)]
There is no central element $f\in U({\mathfrak g})$ such that $f$ 
is a nontrivial product of elements in $(\Bbbk+{\mathfrak g})\setminus \Bbbk$.
\end{enumerate}
\end{lemma}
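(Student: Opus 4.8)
The plan is to treat the two parts separately, using the central variable $t$ to reduce part (1) to commutative algebra and using the associated graded together with invariant theory to handle part (2).

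For part (1), I would first record that $t$ is central in $H(\mathfrak{g})$, so any homogeneous normal element $f$ conjugates $t$ to itself, and, after dividing out the highest power of $t$ dividing $f$ (which is again normal, since $t$ is a central nonzerodivisor in the domain $H(\mathfrak{g})$), I may assume $t \nmid f$. Let $\phi$ be the graded algebra automorphism determined by $f x = \phi(x) f$ for all $x$; then $\phi(t) = t$. The key step is to reduce modulo $t$: since $H(\mathfrak{g})/(t) \cong S(\mathfrak{g})$ is a commutative polynomial ring, hence a domain, and $\overline{f} \neq 0$ there, the relation $\overline{f}\,\overline{a} = \overline{\phi(a)}\,\overline{f}$ together with commutativity forces $\overline{\phi(a)} = \overline{a}$, i.e. $\phi(a) = a + \lambda(a)\,t$ for some linear functional $\lambda \colon \mathfrak{g} \to \Bbbk$. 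Finally, applying $\phi$ to the defining relation $ab - ba = [a,b]\,t$ and comparing the $t^{2}$-components yields $\lambda([a,b]) = 0$ for all $a,b$; since $\mathfrak{g}$ is semisimple, hence perfect ($\mathfrak{g} = [\mathfrak{g},\mathfrak{g}]$), we get $\lambda = 0$, so $\phi = \operatorname{id}$ on generators and $f$ is central.

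For part (2), I would argue by contradiction: suppose $f = u_1\cdots u_w \in Z(U(\mathfrak{g}))$ with each $u_i = x_i + c_i$, $x_i \in \mathfrak{g} \setminus \{0\}$, $c_i \in \Bbbk$. Passing to the associated graded $\gr U(\mathfrak{g}) \cong S(\mathfrak{g})$, the symbol of $f$ is $\overline{f} = x_1\cdots x_w$, which is nonzero because $S(\mathfrak{g})$ is a domain; and since the symbol of a central element is $\mathfrak{g}$-invariant (as $\operatorname{ad}(a)$ induces the adjoint action on $\gr$), we have $\overline{f} \in S(\mathfrak{g})^{\mathfrak{g}} = S(\mathfrak{g})^{G}$, where $G$ is the connected adjoint group of $\mathfrak{g}$. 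The central observation is then geometric: $\overline{f}$ is a product of linear forms, so its zero locus in $\mathfrak{g}^{*}$ is a finite union of hyperplanes, and $G$-invariance of $\overline{f}$ makes this union $G$-stable. Since $G$ is connected, it must fix each irreducible component, so each such hyperplane is $G$-stable; its annihilator is then a $G$-stable line in $\mathfrak{g}$, hence a nonzero vector fixed by $G$ (a connected semisimple group has no nontrivial characters). This contradicts $\mathfrak{g}^{G} = Z(\mathfrak{g}) = 0$, which finishes the argument.

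I expect the only real subtlety to lie in part (2): one must ensure that repeated or proportional linear factors $x_i$ do not spoil the reasoning, but phrasing the argument through the irreducible components of the zero locus (rather than through the individual factors) sidesteps this cleanly, and the connectedness of $G$ is exactly what converts ``$G$ permutes the hyperplanes'' into ``$G$ fixes each hyperplane.'' For part (1) the main points to verify are the harmless reductions (normality of $f/t^{k}$ and $\phi(t)=t$) and the fact that it is precisely perfectness of $\mathfrak{g}$ that kills the functional $\lambda$; everything else is a direct computation with the defining relations.
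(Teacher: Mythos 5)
Your proof is correct, and both parts follow essentially the same route as the paper's. Part (1) is the paper's argument recast in automorphism language: reduce modulo the central element $t$ to see that the normalizing automorphism has the form $a\mapsto a+\lambda(a)t$, then use the defining relations and perfectness of ${\mathfrak g}$ to kill $\lambda$. In part (2) the paper likewise passes to the symbol $\ell_1\cdots\ell_w\in S({\mathfrak g})^{G}$ via the adjoint action and the standard filtration, but finishes by choosing $g\in G$ with $g(\ell_1)$ not proportional to any $\ell_i$ and invoking unique factorization in $S({\mathfrak g})$; your alternative endgame via the $G$-stable union of hyperplanes, connectedness of $G$, and the absence of nonzero fixed vectors is equally valid and, as you note, sidesteps any worry about repeated or proportional factors.
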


\begin{proof} (1) Let $f=t^i f_0$ where $i\geq 0$ and $f_0$ does not have a 
factor of $t$. Since $t$ is central, we may assume that $f=f_0$ which 
is not divisible by $t$. For any $\ell\in {\mathfrak g}$, since $f$ is normal
and homogeneous, we have 
\begin{equation}
\label{E5.2.1}\tag{E5.2.1}
f(\ell)=(a(\ell) t +\ell') f
\end{equation}
for some $a(\ell)\in \Bbbk$. Passing equation \eqref{E5.2.1} to the quotient
ring $S({\mathfrak g})=H({\mathfrak g})/(t)$, we obtain that
$$\bar{f} \ell= \ell' \bar{f}$$
where $\bar{f}\neq 0$ as $f$ is not divisible by $t$. Since $S({\mathfrak g})$
is commutative, $\ell=\ell'$. Thus, for every $\ell\in {\mathfrak g}$,
$$[f,\ell]=f\ell-\ell f=a(\ell) tf$$
for some $a(\ell)\in \Bbbk$. It is easy to check that 
$$[f,[\ell_1,\ell_2]t]=(a(\ell_1)a(\ell_2)-a(\ell_2)a(\ell_1)) t^2 f=0$$
for all $\ell_1,\ell_2\in {\mathfrak g}$. Since ${\mathfrak g}$ is semisimple,
$[f, \ell]=0$ for all $\ell\in {\mathfrak g}$. Since $t$ is central,
$f$ commutes with all elements in degree 1. The assertion follows. 

(2)  Let $G$ be the Lie group associated to ${\mathfrak g}$ and 
consider the adjoint action of $G$ on ${\mathfrak g}$ that extends naturally 
to the action on both the symmetric algebra $S({\mathfrak g})$ and the 
enveloping algebra $U({\mathfrak g})$.  Given any product 
$$f:=(a_1+\ell_1) \cdots (a_n+\ell_n), \quad
a_i\in \Bbbk, 0\neq \ell_i\in {\mathfrak g},$$ 
of elements of $(\Bbbk+{\mathfrak g})\setminus \Bbbk$ in $U({\mathfrak g})$, for some $n\geq 1$,
assume to the contrary that $f$ is in the center of $U({\mathfrak g})$. By an 
elementary property of the adjoint representation, we have $g(f)=f$ for all 
$g\in G$. By using the standard filtration, $g(\bar{f})=\bar{f}$ for all $g\in G$ 
when $\bar{f}:=\ell_1\cdots \ell_n$ is considered as an element in $S({\mathfrak g})$. 
Now we choose $g$ in $G$ such that  $g(\ell_1)$ is not a scalar multiple of 
$\ell_i$ for any $i$. This is possible since ${\mathfrak g}$ is semisimple and 
$G$ acts on ${\mathfrak g}$ with no nonzero fixed points.  By unique factorization 
in $S({\mathfrak g})$, $g(\ell_1 \cdots\ell_n)$, which is 
$g(\ell_1)\cdots g(\ell_n)$, cannot coincide with $\ell_1\cdots\ell_n$.
Therefore $g(\bar{f})\neq \bar{f}$, yielding a contradiction.
\end{proof}

\begin{lemma}
\label{xxlem5.3} 
Let $A$ be the homogenization of the universal enveloping algebra of a finite 
dimensional semisimple Lie algebra $H({\mathfrak g})$. Let $\{z_1,\cdots, z_w\}
\subseteq A$ be a set of nonzero elements of degree one such that the product 
$z_1 z_2\cdots z_w$ is a normal element in $A$. Then each $z_i$ is a scalar 
multiple of $t$. 
\end{lemma}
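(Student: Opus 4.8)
The plan is to reduce everything to the preceding lemma by dehomogenizing. First I would record that each degree-one element of $A=H({\mathfrak g})$ lies in $A_1={\mathfrak g}\oplus \Bbbk t$, so I may write
$$z_i=\ell_i+a_i t, \qquad \ell_i\in {\mathfrak g},\ a_i\in \Bbbk,$$
and the goal is precisely to show that every $\ell_i$ vanishes. Since $f:=z_1z_2\cdots z_w$ is homogeneous of degree $w$ and is normal by hypothesis, Lemma \ref{xxlem5.2}(1) upgrades it to a \emph{central} element of $A$.

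Next I would pass to the dehomogenization. Setting $t=1$ defines a surjective algebra homomorphism $\pi\colon A\to U({\mathfrak g})$, since under $t\mapsto 1$ the relations $at=ta$ and $ab-ba=[a,b]t$ become the defining relations of $U({\mathfrak g})$. As $\pi$ is an algebra map it carries central elements to central elements and respects products, so $\pi(f)=\prod_{i=1}^{w}(a_i+\ell_i)$ is central in $U({\mathfrak g})$, each factor lying in $\Bbbk+{\mathfrak g}$. The key bookkeeping step is then to separate the two kinds of factors. Let $J=\{\,i : \ell_i\neq 0\,\}$. For $i\notin J$ the element $z_i=a_i t$ is a nonzero scalar multiple of $t$, so $a_i\neq 0$ and $\pi(z_i)=a_i$ is a nonzero scalar; factoring these scalars out of $\pi(f)$ shows that $\prod_{i\in J}(a_i+\ell_i)$ is still central in $U({\mathfrak g})$. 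Each surviving factor now lies in $(\Bbbk+{\mathfrak g})\setminus \Bbbk$, so if $J$ were nonempty this would exhibit a central element of $U({\mathfrak g})$ as a nontrivial product of elements of $(\Bbbk+{\mathfrak g})\setminus \Bbbk$, contradicting Lemma \ref{xxlem5.2}(2). Hence $J=\emptyset$, every $\ell_i=0$, and each $z_i=a_i t$ is a scalar multiple of $t$, as claimed.

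The entire argument rests on the two invocations of Lemma \ref{xxlem5.2}: part (1) to know that the normal product is in fact central, and part (2) to forbid a nontrivial central product once we have specialized $t=1$. I do not expect a serious obstacle; the only mildly delicate point is the bookkeeping that discards the purely-$t$ factors $z_i=a_i t$, so that what remains in $U({\mathfrak g})$ is genuinely a product of \emph{non-scalar} elements of $\Bbbk+{\mathfrak g}$. Once that separation is made, the contradiction is immediate.
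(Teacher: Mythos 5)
Your proof is correct and follows essentially the same route as the paper's: apply Lemma \ref{xxlem5.2}(1) to upgrade normality to centrality, dehomogenize via $t\mapsto 1$, and invoke Lemma \ref{xxlem5.2}(2) to rule out any non-scalar factors. The only cosmetic difference is that the paper strips off the scalar multiples of $t$ inside $A$ before applying the lemmas, whereas you factor out the resulting nonzero scalars inside $U({\mathfrak g})$; both are fine.
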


\begin{proof} Since $t$ is central, we can remove those $z_i$ of the form
$at$ for some $a\in \Bbbk$. Thus each $z_i$ is $a_it+\ell_i$ where $a_i\in \Bbbk$ 
and $0\neq \ell_i\in {\mathfrak g}$. By Lemma \ref{xxlem5.2}(1), 
$z_1\cdots z_n$ is central. Then $f:=\pi(z_1)\cdots \pi(z_n)$ 
is central in $U({\mathfrak g})$ where $\pi$ is the quotient map 
$H({\mathfrak g})\to H({\mathfrak g})/(t-1)=U({\mathfrak g})$. 
By Lemma \ref{xxlem5.2}(2), $f$ is trivial. So $n=0$. This means that
each $z_i$ is of the form $at$ for $a\in \Bbbk$.
\end{proof}

Let $A_n(\Bbbk)$ be the $n$th Weyl algebra generated by
$x_1,\cdots,x_n, y_1,\cdots,y_n$ subject to the relations
$$[x_i,x_j]=0=[y_i,y_j], \quad [x_i,y_j]=\delta_{ij}.$$
The Rees ring of $A_n(\Bbbk)$ with respect to the 
standard filtration is generated by 
$x_1,\cdots,x_n, y_1,\cdots,y_n, t$ subject to the relations
$$[x_i,x_j]=0=[y_i,y_j]=[t,x_i]=[t,y_i], \quad [x_i,y_j]=\delta_{ij}t^2.$$

\begin{lemma}
\label{xxlem5.4} 
Let $A$ be the Rees ring of the Weyl algebra $A_n(\Bbbk)$ with respect to 
the standard filtration. Let $\{z_1,\cdots, z_w\}\subseteq A$ be a set of 
nonzero elements of degree one such that the product $z_1 z_2\cdots z_w$ 
is a normal element in $A$. Then each $z_i$ is a scalar multiple of $t$. 
\end{lemma}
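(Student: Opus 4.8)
The plan is to descend to the Weyl algebra itself via the quotient map $\pi\colon A\to A/(t-1)\cong A_n(\Bbbk)$, and to exploit that $A_n(\Bbbk)$ is a simple domain in characteristic zero whose only units are the nonzero scalars. Since $A$ is generated in degree one, its degree-one component is $A_1=\Bbbk t\oplus V$, where $V=\bigoplus_{k}(\Bbbk x_k\oplus \Bbbk y_k)$, and each $z_i$ may be written as $z_i=a_i t+w_i$ with $a_i\in\Bbbk$ and $w_i\in V$. The goal is precisely to show that every $w_i=0$. First I would record that the defining relations collapse under $t\mapsto 1$ to $[x_i,y_j]=\delta_{ij}$, so that $A/(t-1)$ really is $A_n(\Bbbk)$, and that $\pi$ is injective on $A_1$ because the images $1,x_1,\dots,x_n,y_1,\dots,y_n$ are linearly independent in $A_n(\Bbbk)$. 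In particular $\pi(z_i)=a_i+w_i'\neq 0$ for each $i$, where $w_i'=\pi(w_i)$, and $w_i'=0$ if and only if $w_i=0$.

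Next I would transport the normality hypothesis across $\pi$. Writing $f=z_1\cdots z_w$, the equality $fA=Af$ yields $\pi(f)A_n(\Bbbk)=A_n(\Bbbk)\pi(f)$, so $\pi(f)$ is a normal element of $A_n(\Bbbk)$; and since $A_n(\Bbbk)$ is a domain and each $\pi(z_i)\neq 0$, the product $\pi(f)=\pi(z_1)\cdots\pi(z_w)$ is nonzero. A nonzero normal element of a simple ring generates the unit ideal (its two-sided ideal is all of $A_n(\Bbbk)$, giving left and right inverses), hence is a unit, and the only units of $A_n(\Bbbk)$ are nonzero scalars. Therefore $\pi(f)\in\Bbbk^{\times}$.

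The final step is a degree count in the Bernstein filtration, whose associated graded ring is the commutative polynomial domain $\gr A_n(\Bbbk)=\Bbbk[x_1,\dots,x_n,y_1,\dots,y_n]$. Each $\pi(z_i)$ has filtration degree $1$ when $w_i\neq 0$ (with symbol $w_i'$) and degree $0$ when $w_i=0$. Because the associated graded ring is a domain, symbols multiply, so the symbol of $\pi(f)$ is the product of the symbols of the $\pi(z_i)$ and its degree equals $\#\{\,i : w_i\neq 0\,\}$. As $\pi(f)$ is a nonzero scalar, this degree must be $0$, forcing $w_i=0$ for all $i$; that is, each $z_i=a_i t$ is a scalar multiple of $t$, as claimed.

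I expect the only genuine subtlety to be the bookkeeping that makes the descent legitimate: checking that $\pi$ is injective on $A_1$ (so that no $z_i$ degenerates to zero) and that $\pi(f)$ is honestly nonzero, together with the standard but essential inputs that $A_n(\Bbbk)$ is simple and has only scalar units. Once these are in hand the symbol-degree argument is routine. This route runs parallel to the proof of Lemma \ref{xxlem5.3} for $H(\mathfrak{g})$, with the simplicity of $A_n(\Bbbk)$ playing the role there served by Lemma \ref{xxlem5.2}, and it conveniently sidesteps having to decide whether a homogeneous normal element of $A$ is forced to be central.
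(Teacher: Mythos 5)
Your proposal is correct and follows essentially the same route as the paper: specialize $t\mapsto 1$ to land in $A_n(\Bbbk)$, use simplicity to force the image of the normal product to be a unit (hence a nonzero scalar), and conclude that no $z_i$ can contribute a nonzero component from $V$. The only difference is that you justify the final step explicitly via the Bernstein filtration, where the paper simply asserts ``the only possibility is $z_i=t$''; this is a welcome piece of bookkeeping but not a different argument.
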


\begin{proof} Up to a scalar $z_i$ is of the form $t+f_i$ or $f_i$ 
where $f_i\in V:=\bigoplus_{i=1}^n (\Bbbk x_i+\Bbbk y_i)$. When 
$z_i=f_i$, then $f_i\neq 0$. Let $z=z_1 z_2\cdots z_w$ and consider the
algebra map $\phi: A\to A/(t-1)=A_n(\Bbbk)$. Then $\phi(z_i)$ is either
$1+f_i$ or $f_i$, which is nonzero. Since $z$ is normal, so 
is $\phi(z)$. But $A_n(\Bbbk)$ is simple, which implies that 
$\phi(z)$ is a scalar. In this case, the only possibility 
is $z_i=t$ for all $i$. 
\end{proof}

\begin{lemma}
\label{xxlem5.5} 
Let $A$ be the 
non-PI Sklyanin algebra of global dimension at least 3.
Then there is no finite set of
nonzero elements $\{z_1,\cdots, z_w\}$ in degree one such that 
the product $z_1 z_2\cdots z_w$ is a normal element in $A$.
\end{lemma}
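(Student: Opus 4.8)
The plan is to push the hypothetical normal product down to the twisted homogeneous coordinate ring of the elliptic curve attached to $A$, where normality of a positive-degree element forces an impossible $\sigma$-invariant divisor.

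First I would recall the structure theory of Sklyanin algebras (Artin--Tate--Van den Bergh, Tate--Van den Bergh): a non-PI Sklyanin algebra $A$ of global dimension $n\ge 3$ admits a surjection of graded algebras $\pi\colon A\twoheadrightarrow B$, where $B=B(E,\mathcal{L},\sigma)$ is the twisted homogeneous coordinate ring of an elliptic curve $E$, $\sigma$ is translation by a point $\tau\in E$, and $\mathcal{L}$ is a line bundle of degree $n$. The non-PI hypothesis is exactly the statement that $\tau$ has infinite order. The kernel of $\pi$ is generated by the canonical central elements, which have degree $\ge 2$; hence $\pi$ restricts to an isomorphism $A_1\xrightarrow{\ \sim\ }B_1$. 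Moreover $B$ is a noetherian domain, $B_p=H^{0}(E,\mathcal{L}_p)$ with $\mathcal{L}_p=\mathcal{L}\otimes\sigma^{*}\mathcal{L}\otimes\cdots\otimes(\sigma^{p-1})^{*}\mathcal{L}$, the multiplication being $s\cdot t=s\otimes(\sigma^{p})^{*}t$ for $s\in B_p$, and $\mathcal{L}$ is base-point-free since $\deg\mathcal{L}=n\ge 3$.

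The heart of the argument is the claim that $B$ has no nonzero normal element of positive degree. Suppose $0\ne \bar z\in B_w$ with $w\ge 1$ were normal. Since $B$ is a domain, for each $s\in B_1$ there is a unique $\phi(s)\in B_1$ with $\bar z\,s=\phi(s)\,\bar z$, and $\phi$ is a graded automorphism. Writing $D=\operatorname{div}(\bar z)$ and comparing zero divisors of the two sides of $\bar z\,s=\phi(s)\,\bar z$ at the level of global sections gives
\[
\operatorname{div}\phi(s)=\sigma^{-w}\!\big(\operatorname{div}s\big)+\big(D-\sigma^{-1}D\big)\qquad\text{for all }s\in B_1.
\]
The left-hand side is effective for every $s$, while $\{\sigma^{-w}(\operatorname{div}s):s\in B_1\}$ is the base-point-free linear system $|(\sigma^{w})^{*}\mathcal{L}|$; hence the fixed degree-zero divisor $D-\sigma^{-1}D$ can have no negative part, so $D-\sigma^{-1}D=0$ and $D$ is $\sigma$-invariant. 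But an effective divisor of finite positive degree cannot be invariant under a translation of infinite order, since every $\sigma$-orbit in $E$ is infinite; this contradicts $\deg D=nw>0$.

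Granting the claim, the lemma is immediate: if $z_1\cdots z_w$ (with $w\ge 1$ and each $z_i\in A_1$ nonzero) were normal in $A$, then $\pi(z_1)\cdots\pi(z_w)$ would be a normal element of $B$; it is nonzero because $\pi$ is injective in degree $1$ and $B$ is a domain, and it has positive degree $w$, contradicting the claim. The main obstacle is assembling the correct structural input uniformly across all global dimensions $n\ge 3$ (the surjection onto $B(E,\mathcal{L},\sigma)$ with $\sigma$ of infinite order and $\ker\pi$ concentrated in degrees $\ge 2$) and carrying out the effectivity/base-point-free step cleanly; once $B$ is identified, the divisor computation is the decisive point. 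Alternatively, if one cites the classification of normalizing elements of non-PI Sklyanin algebras (every homogeneous normal element is a scalar multiple of a power of the canonical central element, hence lies in $\ker\pi$), the same conclusion follows at once, since then $\pi(z_1\cdots z_w)=0$ contradicts that $B$ is a domain.
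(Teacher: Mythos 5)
Your proposal is correct, and its overall skeleton coincides with the paper's: both push the hypothetical normal product $z_1\cdots z_w$ through the canonical surjection $\pi\colon A\to B=B(E,\mathcal L,\sigma)$, use that $\pi$ is an isomorphism in degree one and that $B$ is a domain to see that the image is a nonzero normal element of positive degree in $B$, and then derive a contradiction from the fact that $\sigma$ has infinite order in the non-PI case. Where you diverge is in the last step. The paper quotes the result of Reichstein--Rogalski--Zhang that $B$ is \emph{projectively simple} (every proper graded factor ring is finite dimensional) and combines it with $\GKdim B=2$: a normal element $x'$ of positive degree would make $B/(x')$ an infinite-dimensional proper factor, a contradiction. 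You instead prove directly that $B$ has no nonzero normal element of positive degree, by the divisor computation $\operatorname{div}\phi(s)=\sigma^{-w}(\operatorname{div}s)+(D-\sigma^{-1}D)$, base-point-freeness of $|(\sigma^{w})^{*}\mathcal L|$ forcing $D=\sigma^{-1}D$, and the impossibility of a finite nonempty $\sigma$-stable support when $\sigma$ is a translation of infinite order. Your route is more self-contained and makes the geometric mechanism visible; the paper's route is shorter and delegates the work to a cited general theorem. One cosmetic caveat: your assertion that $\ker\pi$ is generated by \emph{central} elements is only literally documented for low global dimension (for $n\ge 5$ the cited fact is just that the kernel lives in degrees $\ge 2$, equivalently that $\pi$ is an isomorphism in degree one), but that weaker statement is all your argument actually uses, so nothing breaks.
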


\begin{proof} This was basically proved in \cite[Corollary 6.]{KKZ1}.
For completeness we give a proof here. Let $n$ be the global 
dimension of the Sklyanin algebra $A$. 

Associated to $A$ there is a triple $(E, \sigma, {\mathcal L})$ 
where $E \subseteq {\mathbb P}^{n-1}$ is an elliptic curve of
degree $n$, ${\mathcal  L}$ is an invertible line bundle over 
$E$ of degree $n$ and $\sigma$ is an automorphism of $E$ induced 
by a translation. Basic properties of $A$ can be found in 
\cite{ATV} for $n = 3$, \cite{SS} for $n = 4$, and \cite{TV} 
for $n \geq 5$. Associated to $(E, \sigma,{\mathcal L})$ one can
construct the twisted homogeneous coordinate ring, denoted by 
$B(E, \sigma,{\mathcal L})$. Then there is a canonical surjection
$$\phi: A \to  B(E, \sigma,{\mathcal L}) =: B$$
such that $\phi$ becomes an isomorphism when restricted to the degree 
one piece. This statement was proved in 
\cite[Section 6]{ATV} for $n = 3$, \cite[Lemma 3.3]{SS} for $n = 4$,
and \cite[(4.3)]{TV} for $n \geq 5$. If $A$ is non-PI, then $\sigma$ 
has infinite order. Hence $B$ is so-called {\it projectively simple} 
by \cite{RRZ1}, which means that any proper factor ring of B is finite
dimensional. Also note that the GK-dimension of $B$ is 2.

Suppose that there are nonzero elements $z_1,\cdots,z_w$ in $A$ of 
degree 1, such that $x:=z_1 z_2\cdots z_w$ is normal. Let 
$$x'=\phi(x)=\phi(z_1)\cdots \phi(z_w)\in B.$$ 
Since $\phi$ is an isomorphism in degree 1, each $\phi(z_i)\neq 0$. 
Now a basic property of $B$ is that it is a domain. Hence 
$x'\neq 0$. Since $x$ is normal, so is $x'$. Therefore $B/(x')$ is
an infinite dimensional proper factor ring of $B$, which contradicts 
the fact that $B$ is projectively simple.
\end{proof}


Now we are ready to prove Theorem \ref{xxthm0.9}.

\begin{theorem}
\label{xxthm5.6}
Let $\Bbbk$ be an algebraically closed field of characteristic zero. 
\begin{enumerate}
\item[(1)]
Let $A$ be the homogenization of the universal
enveloping algebra of a finite dimensional semisimple Lie algebra 
$H({\mathfrak g})$. For every finite group $G$, $A^{\Bbbk^G}$ 
is not AS regular. As a consequence, $A$ is rigid with respect to 
group coactions.
\item[(2)]
Let $A$ be the Rees ring of the Weyl algebra $A_n(\Bbbk)$ with respect 
to the standard filtration. If $G$ is a finite group such that
$A^{\Bbbk^G}$ is AS regular, then $G={\mathbb Z}/(2)$ and 
$A^{\Bbbk^G}\not\cong A$. As a consequence, $A$ is rigid with respect to 
group coactions.
\item[(3)]
Let $A$ be the non-PI Sklyanin algebras of global dimension at least 3.
For every finite group $G$, $A^{\Bbbk^G}$ 
is not AS regular. As a consequence, $A$ is rigid with respect to 
group coactions.
\end{enumerate}
\end{theorem}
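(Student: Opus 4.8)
The plan is to show, for each of the three families, that no \emph{nontrivial} finite group $G$ can coact homogeneously and inner faithfully on $A$ with $A^{\Bbbk^G}\cong A$; by Definition \ref{xxdef0.8} this is exactly rigidity. The crucial reduction is that if $A^{\Bbbk^G}\cong A$, then $A^{\Bbbk^G}$ is AS regular, so by Definition \ref{xxdef0.1} the group $G$ coacts on $A$ as a dual reflection group, and the entire structure theory of Sections \ref{xxsec3}--\ref{xxsec4} becomes available. In particular, Lemma \ref{xxlem5.1} produces nonzero degree-one elements $z_1,\dots,z_w$ (with $w=\lrr(m)\geq 1$, since $G$ is nontrivial) whose product $z_1\cdots z_w$, equal to a nonzero scalar multiple of $f_m$ by \eqref{E5.1.1}, is normal. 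For parts (1) and (3) I will in fact prove the stronger assertion that $A^{\Bbbk^G}$ is never AS regular for nontrivial $G$, which subsumes the non-isomorphism.

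Part (3) is immediate: Lemma \ref{xxlem5.5} asserts that the non-PI Sklyanin algebra admits \emph{no} such normal product of degree-one elements, directly contradicting the product produced by Lemma \ref{xxlem5.1}. Hence no nontrivial $G$ coacts as a dual reflection group, so $A^{\Bbbk^G}$ is never AS regular, and rigidity follows.

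For part (1), I would first feed the normal product into Lemma \ref{xxlem5.3}, which forces every $z_i=f_{g_i}$ to be a scalar multiple of the central element $t$. Since $f_{g_i}$ is $G$-homogeneous of degree $g_i\neq e$, this shows $t$ is $G$-homogeneous of some degree $g_0$ and that all $g_i$ coincide with $g_0$; moreover $(A_1)_{g_0}=\Bbbk f_{g_0}=\Bbbk t$ is one-dimensional by Theorem \ref{xxthm0.3}(2), so $\mathfrak g$ has trivial $g_0$-component. Passing to the commutative domain $A/(t)\cong S(\mathfrak g)$, which inherits a $G$-grading, commutativity forces the $G$-degrees of $\mathfrak g$ to commute pairwise; together with the centrality of $g_0$ this makes $G$ abelian. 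The endgame is to untwist: define a new $G$-grading on $\mathfrak g$ by $\mathfrak g'_g:=\mathfrak g_{gg_0}$. Using the defining relation $ab-ba=[a,b]t$ and that $t$ is a nonzerodivisor of degree $g_0$, one checks $[\mathfrak g'_p,\mathfrak g'_q]\subseteq\mathfrak g'_{pq}$, so $\mathfrak g'$ is a genuine grading of the Lie algebra $\mathfrak g$ by the abelian group $G$ whose neutral component $\mathfrak g'_e=\mathfrak g_{g_0}$ vanishes. This contradicts the standard fact that a finite abelian grading of a semisimple Lie algebra has nonzero neutral component (it contains a Cartan subalgebra). Thus no nontrivial $G$ exists.

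Part (2) runs along the same lines, now via Lemma \ref{xxlem5.4}: the factors $z_i$ are again scalar multiples of $t$, so $t$ is $G$-homogeneous of degree $g_0\neq e$ with $(A_1)_{g_0}=\Bbbk t$, and $A/(t)$ is the commutative polynomial ring on $x_i,y_i$, forcing $G$ abelian. Here the bracket relation reads $[x_i,y_j]=\delta_{ij}t^2$, whose right-hand side lies in degree $g_0^2$; combining this with the requirement that $A^{\Bbbk^G}=A_e$ be a subalgebra containing the trivially graded degree-one generators forces $t^2\in A_e$, i.e.\ $g_0^2=e$, and a short analysis of the remaining possibilities pins down $G=\mathbb Z/(2)$ with $t$ of nontrivial degree and the $x_i,y_i$ all of trivial degree. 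Finally, comparing Hilbert series---$H_A(s)=(1-s)^{-(2n+1)}$ against $H_{A_e}(s)=(1-s)^{-2n}(1-s^2)^{-1}$, since in $A_e$ the degree-one generator $t$ is replaced by $t^2$ in degree two---shows $A^{\Bbbk^G}\not\cong A$. I expect the main obstacle throughout to be precisely this group- and Lie-theoretic bookkeeping in parts (1) and (2): turning ``the generator $g_0$ is realized only by the central variable $t$'' into a genuine contradiction (respectively a rigid classification) requires the commutativity of $A/(t)$ to secure abelianness and then the nonvanishing-neutral-component input, whereas Part (3) is a clean one-line contradiction.
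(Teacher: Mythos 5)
Your part (3) is exactly the paper's argument (Lemma \ref{xxlem5.1} against Lemma \ref{xxlem5.5}), and your common setup for (1) and (2) — feeding the normal product from Lemma \ref{xxlem5.1} into Lemmas \ref{xxlem5.3}/\ref{xxlem5.4} to force every $f_{g_i}$ into $\Bbbk t$ — is also the paper's. But from that point the paper takes a different and safer route: since all elements of $\Re$ must equal a single $g_0$, the group $G$ is \emph{cyclic}, hence $\Bbbk^G\cong \Bbbk G_0$ as Hopf algebras over the algebraically closed field, and the coaction dualizes to an honest cyclic group action; the conclusion is then quoted from the group-action rigidity results of \cite{KKZ1} (Theorem 2.4 and Lemma 6.5(d) for part (1); Proposition 6.7 and Corollary 6.8 for part (2)). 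You instead only extract abelianness of $G$ and try to finish part (1) by hand inside $\mathfrak g$.

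That finish has a genuine gap: the ``standard fact'' you invoke — that a grading of a semisimple Lie algebra by a finite abelian group has nonzero neutral component (even containing a Cartan subalgebra) — is false. For example, $\mathfrak{sl}_2=\Bbbk h\oplus\Bbbk(e+f)\oplus\Bbbk(e-f)$ is a $(\mathbb Z/2)^2$-grading supported entirely on the three nontrivial group elements, so its neutral component is zero; and even for $\mathbb Z/2$-gradings the neutral component need not contain a Cartan subalgebra (a Cartan decomposition such as $\mathfrak{so}(3)\subset\mathfrak{sl}_3$). The statement you need is only true for \emph{cyclic} gradings, where it follows from the Jacobson--Kreknin theorem that a semisimple Lie algebra admits no fixed-point-free automorphism of finite order. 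Your argument can be repaired by first observing (as the paper does) that every element of $\Re$ occurs as the first letter of some reduced expression of $m$, so Lemma \ref{xxlem5.3} forces $\Re=\{g_0\}$ and $G=\langle g_0\rangle$ cyclic, after which either your untwisting argument (now against a single finite-order automorphism) or the paper's reduction to \cite{KKZ1} closes the case. A second, smaller issue is in part (2): your Hilbert series comparison only rules out a \emph{graded} isomorphism $A^{\Bbbk^G}\cong A$, whereas Definition \ref{xxdef0.8} requires non-isomorphism as algebras; the paper handles this by citing \cite[Corollary 6.8]{KKZ1}.
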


\begin{proof}
(1) Assume to the contrary that $G$ is a nontrivial finite group
such that $A^{\Bbbk^G}$ is AS regular. By Lemma \ref{xxlem5.1} and 
\eqref{E5.1.1}, $f_m=cf_{g_1}\cdots f_{g_w}$ where 
$g_i\in \Re$. Then $f_{g_i}$ is of degree 1. By 
Theorem \ref{xxthm0.5}(1), $f_m$ is a normal 
element. By Lemma \ref{xxlem5.3}, each $f_{g_i}$ is a
scalar multiple of $t$. This implies that all
$g_i$ are the same and $G$ is generated by $g_1$,
whence abelian. In this case the Hopf algebra
$\Bbbk^G$ is cocommutative. Since $\Bbbk$ is 
algebraically closed, $H$ is isomorphic to 
a group algebra $\Bbbk G_0$ for some group
$G_0$ (in fact $G_0\cong G$ is cyclic). By \cite[Theorem 2.4]{KKZ1},
$G_0$ contains a quasi-reflection. This contradicts  
\cite[Lemma 6.5(d)]{KKZ1}. Therefore the assertion follows,
and the consequence is obvious. 

(2) Let $G$ be a nontrivial finite group such that $A^{\Bbbk^G}$ 
is AS regular. By the proof of part (1), there is a 
nontrivial cyclic group $G_0$ such that $A^{\Bbbk^G}=A^{G_0}$
with some natural action of $G_0$ on $A$ and $\Bbbk^G\cong \Bbbk G_0$. 
By \cite[Proposition 6.7]{KKZ1}, $G_0={\mathbb Z}/(2)$ and
by \cite[Corollary 6.8]{KKZ1}, $A^{G_0}\not\cong A$. Note that $G\cong G_0
={\mathbb Z}/(2)$. So the assertion follows, and the consequence is obvious.

(3) The assertion follow from Lemmas \ref{xxlem5.1} and \ref{xxlem5.5}.
The consequence is obvious.
\end{proof}

\subsection*{Acknowledgments}

The authors would like to thank Monty McGovern for his proof of Lemma \ref{xxlem5.2}(2).
E. Kirkman was partially supported by
grant \#208314 from the Simons Foundation. J.J. Zhang was partially supported
by the US National Science Foundation (grant No DMS 1402863).

\end{document}